\renewcommand{\epsilon}{\varepsilon}
\renewcommand{\phi}{\varphi}
\newtheorem{corollary}{Corollary}
\newtheorem{lemma}{Lemma}
\newtheorem{theorem}{Theorem}
\newtheorem{definition}{Definition}
\newtheorem{remark}{Remark}
\renewcommand\subsection{\@startsection{subsection}{2}%
  \z@{.5\linespacing\@plus.7\linespacing}{.1\linespacing}%
  {\normalfont\scshape}}
\title[Strict convexity for GJE]{Strict $g$-convexity for generated Jacobian equations with applications to global regularity}
\author{Cale Rankin}
\email{cale.rankin@anu.edu.au\\cale.rankin@gmail.com}
\thanks{This research is supported by an Australian Government Research Training Program (RTP) Scholarship and by ARC DP 200101084.}
\begin{document}

\begin{abstract}
This article has two purposes. The first is to prove solutions of the second boundary value problem for generated Jacobian equations (GJEs) are strictly $g$-convex. The second is to prove the global $C^3$ regularity of Aleksandrov solutions to the same problem. In particular, Aleksandrov solutions are classical solutions. These are related because the strict $g$-convexity is essential for the proof of the global regularity. The assumptions for the strict $g$-convexity are the natural extension of those used by Chen and Wang in the optimal transport case. They are the Loeper maximum principle condition, a positively pinched right-hand side, a $g^*$-convex target, and a source domain strictly contained in a $g$-convex domain. This improves the existing domain conditions though at the expense of requiring a $C^3$ generating function. This is appropriate for global regularity where existence is proved assuming a $C^4$ generating function.  We prove the global regularity under the hypothesis that Jiang and Trudinger recently used to obtain the \textit{existence} of a globally smooth solution and an additional condition on the height of solutions. Our proof of global regularity is by modifying Jiang and Trudinger's existence result to construct a globally $C^3$ solution intersecting the Aleksandrov solution. Then the strict convexity yields the interior regularity to apply the author's uniqueness results.
\end{abstract}

 \maketitle
 \section{Introduction}

Generated Jacobian equations generalise the Monge--Amp\`ere equation to encompass applications to geometric optics. For these equations a generalised notion of convexity, known as $g$-convexity, plays the same role as convexity does for Monge--Amp\`ere equations. In this article we prove the strict $g$-convexity of solutions to the second boundary value problem for GJEs. As an application of this result we prove the global $C^3 $ regularity of Aleksandrov solutions to the same problem under stronger hypothesis. 

Generated Jacobian equations are PDE of the form
\begin{equation}
  \label{eq:gje}
   \det DY(\cdot,u,Du) = \psi(\cdot,u,Du) \text{ in }\Omega,
 \end{equation}
 where $Y:\mathbf{R}^n\times \mathbf{R}\times\mathbf{R}^n \rightarrow \mathbf{R}^n$ is a vector field with a particular structure: It arises from a generating function as outlined in Section \ref{sec:g}. The other components of \eqref{eq:gje} are a nonnegative function $\psi: \mathbf{R}^n\times \mathbf{R}\times\mathbf{R}^n \rightarrow \mathbf{R}$ and a domain $\Omega \subset\mathbf{R}^n$. For the strict $g$-convexity we assume
\begin{equation}
  \label{eq:pinch-rhs}
   \lambda \leq \psi(\cdot,u,Du) \leq \Lambda,
\end{equation}
for positive constants $\lambda,\Lambda$. For the global regularity we assume
\begin{equation}
  \label{eq:frac-rhs}
   \psi(\cdot,u,Du) = \frac{f(\cdot)}{f^*(Y(\cdot,u,Du))},
\end{equation}
for $C^2$ positive functions $f,f^*$. In each case \eqref{eq:gje} is coupled with the second boundary value problem:
\begin{equation}
  \label{eq:2bvp}
   Y(\cdot,u,Du)(\Omega) = \Omega^*,
\end{equation}
for a prescribed domain $\Omega^*.$ We require convexity conditions on $\Omega,\Omega^*$. These, and other required definitions and structure conditions, are introduced in Section \ref{sec:g} where we also state our main results: Theorems \ref{thm:strict-convexity} and \ref{thm:global-regularity}.

Generated Jacobian equations were introduced by Trudinger \cite{Trudinger14} to extend the theory of Monge--Amp\`ere type equations in optimal transport to problems in geometric optics. Thus, to situate our results we outline the optimal transport case.

The Monge--Amp\`ere equation, recovered from \eqref{eq:gje} by taking $Y(x,u,Du) = Du$, is a fully nonlinear PDE, elliptic when $u$ is convex. Brenier \cite{Brenier1991} showed that  a (suitably defined) weak solution of \eqref{eq:gje} subject to \eqref{eq:2bvp} is obtained by solving the optimal transport problem with quadratic cost. Thus studying the regularity of optimal transport maps is tantamount to studying the Monge--Amp\`ere equation paired with the second boundary value problem. The work relevant to ours is that of Caffarelli \cite{Caffarelli92a,Caffarelli} and Urbas \cite{Urbas1997}. Caffarelli's work takes place under the assumption \eqref{eq:pinch-rhs} and is concerned with the strict convexity and $C^{1,\alpha}$ regularity of a weak notion of solution, known as Aleksandrov solutions. Urbas, in the smooth setting, obtained the global estimates required for the method of continuity, thereby proving the existence of globally regular solutions. The uniqueness of solutions (up to a constant) then implies the global regularity of Aleksandrov solutions. 

The optimal transport problem with a general cost yields an equation of the form \eqref{eq:gje} with $Y(\cdot,u,Du) = Y(\cdot,Du)$. This  equation is more complex.  However thanks to new ideas  built on a generalised notion of convexity and a condition known as A3w, the results of Caffarelli and of Urbas have been extended to this new equation. Urbas's results were extended by Trudinger and Wang \cite{TrudingerWang09}. They obtained the estimates for the method of continuity and proved the existence of globally $C^3$ solutions. Again, by the uniqueness up to a constant, this implies global regularity of Aleksandrov solutions. The Caffarelli style strict convexity and $C^{1,\alpha}$ regularity under A3w has been proved by a number of authors, namely Figalli, Kim, and McCann \cite{FKM13}, Guillen and Kitagawa \cite{GuillenKitagawa15}, V\`etois\cite{Vetois15}, and Chen and Wang \cite{ChenWang16}. Each proves the strict convexity and $C^{1,\alpha}$ regularity under different hypotheses on the domain and cost function. 

For GJEs the picture is not complete. Guillen and Kitagawa \cite{GuillenKitagawa17} have extended their strict convexity and $C^{1,\alpha}$ regularity results to GJEs. Their result is of particular interest because their regularity requirements on the generating function are very weak --- it need only be $C^2$. However there are situations where their domain conditions are restrictive. One of which is our application to global regularity. Thus it's of interest to have the extension of Chen and Wang's result, which holds under weaker conditions on the domains, to generated Jacobian equations. This extension is one of the goals of this paper. The strict $g$-convexity implies $C^{1,\alpha}$ regularity, as shown by Guillen and Kitagawa. We apply the strict convexity result to the problem of  $C^3(\overline{\Omega})$ regularity for Aleksandrov solutions. The existence of $C^3(\overline{\Omega})$ solutions was proved by Jiang and Trudinger \cite{JiangTrudinger18}. However, Karakhanyan and Wang \cite{KarakhanyanWang2010} have shown solutions of generated Jacobian equations can have different regularity properties. Thus, in stark contrast to the Monge--Amp\`ere  and optimal transport setting, there is no uniqueness up to a constant and the existence of a globally smooth solution does not imply the regularity of Aleksandrov solutions. Recently the author proved if two $C^{1,1}_{\text{loc}}$ solutions intersect then they are the same solution \cite{Rankin2020}. We combine this with the strict convexity and Jiang and Trudinger's existence result to prove the global regularity of Aleksandrov solutions as follows: First we introduce a modification of Jiang and Trudinger's construction so as to obtain a $C^3(\overline{\Omega})$ solution intersecting a given Aleksandrov solution. By the strict $g$-convexity and Trudinger's recent interior regularity result for strictly $g$-convex solutions \cite{Trudinger20} we have the required regularity to apply the uniqueness result. Thus our Aleksandrov solution is the constructed $C^3(\overline{\Omega})$ solution.

Here's the outline of the paper. In Section \ref{sec:g} we introduce $g$-convexity and the required definitions for our main results. In Section \ref{sec:transformations-1} we introduce a transformation to the generating function and coordinates which makes the generating function ``almost affine''. Such transformations are used in the optimal transport case for strict convexity \cite{ChenWang16} and interior regularity \cite{LTW10,LTW15}.  These transformations are the main tool required to extend Chen and Wang's strict $c$-convexity; transformations in hand our strict $g$-convexity result follows a similar framework and uses similar proofs to theirs. We introduce $g$-cones in Section \ref{sec:g-cones-subd} and estimate their $g$-subdifferentials. In Section \ref{sec:uniform-estimates} we obtain uniform estimates for Aleksandrov solutions of the Dirichlet problem for GJEs. The strict $g$-convexity is proved in Section \ref{sec:strict-conv} followed by a short proof of $C^1$ differentiability in Section \ref{sec:furth-cons-strict}. We note at two critical junctures we rely on Lemmas from \cite{FKM13} and \cite{GuillenKitagawa17} and our $g$-cone and uniform estimates have been obtained under different hypotheses in \cite{GuillenKitagawa17}.  Finally in Section \ref{sec:global-regularity} we complete the proof of  $C^3(\overline{\Omega})$ regularity of Aleksandrov solutions as outlined above.

\subsubsection*{Acknowledgements}
My thanks to Shibing Chen and Xu-Jia Wang for discussions regarding \cite{ChenWang16}. 

\section{Generating functions and $g$-convexity}
\label{sec:g}

The theory of generated Jacobian equations is a combination of elliptic PDE and a generalisation of  convexity theory. Here we give the definitions required for the generalised convexity theory. More detailed introductions can be found in \cite{GuillenKitagawa17,Guillen19,RankinThesis}. We begin with the definition of generating functions. These are a nonlinear extension of affine supporting planes. 

\begin{definition}
  A generating function is a function $g$ satisfying the conditions A0,A1,A1$^*$, and A2. 
\end{definition}

\textbf{A0.} The function $g$ satisfies $g \in C^3(\overline{\Gamma})$ where $\Gamma$ is a bounded domain of the form $\Gamma := \{(x,y,z); x \in U, y \in V, z \in I_{x,y}\} \subset \mathbf{R}^n \times \mathbf{R}^n \times \mathbf{R}$ for domains $U,V \subset \mathbf{R}^n$ and  $I_{x,y}$ an open interval for each $x,y$. Moreover we assume there is an open interval $J$ such that $g(x,y,I_{x,y}) \supset J$ for each $x \in U, y \in V$. 

\textbf{A1. } For each $(x,u,p) \in \mathcal{U}$ defined by
\[ \mathcal{U} = \{(x,g(x,y,z),g_x(x,y,z)) ; (x,y,z) \in \Gamma\},\]
there is a unique $(x,y,z) \in \Gamma$ such
\begin{align*}
  &g(x,y,z) = u &&g_x(x,y,z)=p.
\end{align*}

\textbf{A1$^*$. } For each fixed $y,z$ the mapping $x \mapsto \frac{g_y}{g_z}(x,y,z)$ is injective on its domain of definition.

\textbf{A2.} On $\overline{\Gamma}$ there holds $g_z<0$ and  the matrix\footnote{Subscripts before the comma denote differentiation with respect to $x$, subscripts after the comma (which are not $z$) denote differentiation with respect to $y$.}
\[ E:= g_{i,j}-g_z^{-1}g_{i,z}g_{,j}\]
satisfies $\det E \neq 0$. 

Later we introduce a dual generating function and we'll see the A1$^*$ condition is simply the A1 condition for the dual generation function --- thereby justifying the name. We've incorporated part of Guillen and Kitagwa's definition of uniform admissibility condition into A0. The boundedness requirement on $\Gamma$ may be weakened provided $\Vert g\Vert_{C^3(\Gamma)}$ is finite and the quantities in A2 are bounded away from zero. 

We define mappings $Y:\mathcal{U} \rightarrow \mathbf{R^n}, Z:\mathcal{U} \rightarrow \mathbf{R}$ by requiring they solve
\begin{align}
  \label{eq:yz-def1} g(x,Y(x,u,p),Z(x,u,p))&=u,\\
  \label{eq:yz-def2} g_x(x,Y(x,u,p),Z(x,u,p))&=p. 
\end{align}

We domains $\Omega\subset U$ and $\Omega^* \subset V$ and do not  forbid $\Omega = U, \Omega^* = V$.

\begin{definition}
  A generated Jacobian equation is an equation of the form \eqref{eq:gje} where the vector field $Y$ arises from solving \eqref{eq:yz-def1},\eqref{eq:yz-def2} for a generating function. 
\end{definition}

The basic example is the Monge--Amp\`ere equation which arises from $g(x,y,z) = x\cdot y - z$. The Monge--Amp\`ere equation is elliptic when solutions are convex. In this case, that's when solutions are supported by the generating function at each point. This permits the following generalisation. 

\begin{definition}
  A function $u:\Omega \rightarrow \mathbf{R}$ is called $g$-convex (strictly $g$-convex) provided for every $x_0 \in \Omega$ there is $y_0 \in V$ and $z_0 \in \cap_{x \in \Omega}I_{x,y_0}$ such that
  \begin{align}
\label{eq:conv-def1}    u(x_0) &= g(x_0,y_0,z_0)\\
\label{eq:conv-def2}    u(x) &\geq(>) g(x,y_0,z_0) \text{ for all } x \in \Omega, x \neq x_0,
  \end{align}
  and for any such $x_0,y_0,z_0$ we have
  \begin{equation}
    \label{eq:containment}
    g(\overline{\Omega},y_0,z_0)\subset J.
  \end{equation}
\end{definition}
The containment condition is due to Guillen and Kitagawa \cite{GuillenKitagawa17} and $g$-convex functions satisfying it are referred to by them as ``very nice''. Our definition of $g$-convex functions implies they are semiconvex. 

For $g$-convex $u:\Omega\rightarrow \mathbf{R}$ we define a mapping $Yu:\Omega \rightarrow V$ as follows
\[ Yu(x_0) = \{y \in V; \text{there is } z_0 \text{ such that \eqref{eq:conv-def1}, \eqref{eq:conv-def2} and \eqref{eq:containment} hold}\}.\]
If $u$ is differentiable $Yu(x) = Y(x,u,Du)$. Thus $Yu$ generalises the mapping $x \mapsto Y(x,u,Du)$ in much the same way the subgradient generalises the gradient (see Lemma \ref{lem:loeper-consequences} for details).  We restrict our attention to $g$-convex solutions of GJEs. For such solutions the PDE is degenerate elliptic \cite{Trudinger14}. Using the $Y$ mapping we have the follow generalisation of Aleksandrov solutions.

\begin{definition}
  Let $u:\Omega \rightarrow \mathbf{R}$ be a $g$-convex function. Then $u$ is called an Aleksandrov solution of \eqref{eq:gje} provided for every Borel $E\subset \Omega$ there holds
  \[ \vert Yu(E) \vert = \int_{E} \psi(\cdot,u,Du),\]
  where $Du$ is well defined almost everywhere by the semiconvexity. When $\psi$ has the form (\ref{eq:frac-rhs}) for positive $f,f^*$, it is equivalent to require
  \[\int_{Yu(E)}f^*(y) \ dy = \int_E f(x) \ dx.\]
\end{definition}

Given $y_0 \in Yu(x_0)$ we frequently need to find the $z_0$ for which $g(\cdot,y_0,z_0)$  is a $g$-support at $x_0$. This is accomplished by the dual generating function, which plays several important roles.

\begin{definition}
  The dual generating function is the unique function $g^*$-defined on
  \[ \Gamma^*:= \{(x,y,u) = (x,y,g(x,y,z)); (x,y,z) \in \Gamma\},\]
  by either of the equivalent requirements
  \begin{equation}
    \label{eq:gstar}
     g(x,y,g^*(x,y,u)) = u \text{ or, equivalently, } g^*(x,y,g(x,y,z)) = z. 
  \end{equation}
\end{definition}
The dual generating function is well defined because $g_z<0$. We note  if $g(\cdot,y_0,z_0)$ is a support at $x_0$ by \eqref{eq:conv-def1} we have $z_0 = g^*(x_0,y_0,u(x_0))$ and subsequently the support is $g(\cdot,y_0,g^*(x_0,y_0,u(x_0)))$. Differentiating \eqref{eq:gstar} we obtain the identities
\begin{align}
  \label{eq:gstar-deriv}
  &g^*_u = \frac{1}{g_z}, &&g^*_x = - \frac{g_x}{g_z}, &&g^*_y = -\frac{g_y}{g_z}, 
\end{align}
where $g^*$-terms are evaluated at $(x,y,u)$ and $g$-terms at $(x,y,g^*(x,y,u))$, or, alternatively, $g$-terms are evaluated at $(x,y,z)$ and $g^*$-terms at $(x,y,g(x,y,z))$. 

We also have domain convexity notions.

\begin{definition}
  \begin{enumerate}
  \item A set $A \subset U$ is called (uniformly) $g$-convex with respect to $y \in V,z \in \cap_{x \in A}I_{x,y}$ provided
    \[ \frac{g_y}{g_z}(A,y,z),\]
    is (uniformly) convex. \\
    \item A set $B \subset V$ is called (uniformly) $g^*$-convex with respect to $x \in U,u \in J$ provided
    \[g_x(x,\cdot,g^*(x,\cdot,u))(B),\]
    is (uniformly) convex.
  \end{enumerate}
\end{definition}

Certain statements are made more concise by defining $g$-convexity with respect to a function, as opposed to points.

\begin{definition} Let $u:\Omega\rightarrow \mathbf{R}$ be a $g$-convex function.
  \begin{enumerate}
  \item A set $A \subset U$ is called (uniformly) $g$-convex with respect to $u$ provided
    \[ \frac{g_y}{g_z}(A,y,z),\]
    is (uniformly) convex whenever $y \in Yu(x), z = g^*(x,y,u(x))$ for $x \in \Omega$ ($x \in \overline{\Omega}$). \\
    \item A set $B \subset V$ is called (uniformly) $g^*$-convex with respect to $u$ provided
    \[g_x(x,\cdot,g^*(x,\cdot,u(x)))(B),\]
    is (uniformly) convex for each $x \in \Omega$ ($x \in \overline{\Omega}$).
  \end{enumerate}
\end{definition}

In line with the above definitions of $g/g^*$-convexity, sets whose image under $x \mapsto \frac{g_y}{g_z}(x,y,z)$ is a line segment, will be used repeatedly.
\begin{definition}
  \begin{enumerate}
  \item   A collection of points $\{x_\theta\}_{\theta \in [0,1]} \subset U$ is called a $g$-segment with respect to $y,z$ provided
  \[\left\{\frac{g_y}{g_z}(x_\theta,y,z)\right\}_{\theta \in [0,1]}, \]
  is a line segment. \\
\item A collection of points $\{y_\theta\}_{\theta \in [0,1]} \subset V$ is called a $g^*$-segment with respect to $x,u$ provided
 $\{g_x(x,y_\theta,g^*(x,y_\theta,u))\}_{\theta \in [0,1]}$
  is a line segment. 
  \end{enumerate}
\end{definition}

These are the basic definition of $g$-convexity theory. In the optimal transport case Ma, Trudinger, and Wang \cite{MTW05} introduced a condition on the fourth derivatives of the cost function to prove interior regularity. Later work \cite{Loeper09,Liu09,TrudingerWang09a,KimMcCann10,FKM13} revealed that this condition is essential for the convexity theory (outlined in Lemma \ref{lem:loeper-consequences}). Loeper found a synthetic  interpretation of the condition which has the following extension to generating functions.\\

\textbf{The Loeper Maximum principle (LMP). }
Let $x_0 \in U$, $y_0,y_1 \in V$ and $u_0 \in J$ be given. Let $\{y_\theta\}_{\theta \in [0,1]}$ denote the $g^*$-segment with respect $x_0,u_0$ that joins $y_0$ to $y_1$. The generating function $g$ satisfies the Loeper maximum principle provided for all $x \in U$
\[ g(x,y_\theta,g^*(x_0,y_\theta,u_0)) \leq \text{max}\{g(x,y_1,g^*(x_0,y_1,u_0)),g(x,y_0,g^*(x_0,y_0,u_0))\}.\]

 The following results, concerning compatibility between the definitions of $g$-convex functions and $g$-convex sets, are well known consequences of the Loeper maximum principle \cite{LoeperTrudinger21,Trudinger20}.

\begin{lemma}\label{lem:loeper-consequences}
  Assume $g$ is a generating function satisfying the Loeper maximum principle and $u:\Omega \rightarrow \mathbf{R}$ is a $g$-convex function. Let $x_0 \in \Omega, y_0 \in Yu(x_0),u_0=u(x_0)$ and $z_0 = g(x_0,y_0,u_0)$. The following statements hold.
  \begin{enumerate}
  \item $Yu(x_0)$ is $g^*$-convex with respect to $x_0,u_0$. \\
  \item Let $h>0$. Then the sets $\{x \in \Omega; u(x) < g(x,y_0,z_0-h)\}$ and $\{x \in \Omega ;u(x) = g(x,y_0,z_0)\}$ are, when compactly contained in $\Omega$,  $g$-convex with respect to $y_0,z_0-h$, and $y_0,z_0$ respectively. \\
  \item $Yu(x_0) = Y(x_0,u_0,\partial u(x_0))$ where $\partial u$ denotes the subdifferential  \footnote{The subdifferential is defined for a semiconvex function $u:\Omega\rightarrow \mathbf{R}$ by
 \[ \partial u(x_0) = \{ p \in \mathbf{R}^n; u(x) \geq p \cdot (x-x_0)+u(x_0)+o(\vert x-x_0\vert ) \text{ for all }x \in \Omega\}.\]}. 
  \end{enumerate}
\end{lemma}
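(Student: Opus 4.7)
The plan is to handle the three parts in order, each reducing to a direct application of the LMP (or its dual counterpart) together with manipulation of the defining equations for $Y$, $Z$, and $g^*$.

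For part (1), take $y_0, y_1 \in Yu(x_0)$ and set $z_i := g^*(x_0, y_i, u_0)$, so that $u \geq g(\cdot, y_i, z_i)$ on $\Omega$ with equality at $x_0$. If $\{y_\theta\}$ is the $g^*$-segment from $y_0$ to $y_1$ with respect to $(x_0, u_0)$ and $z_\theta := g^*(x_0, y_\theta, u_0)$, the LMP yields
\[
g(x, y_\theta, z_\theta) \leq \max\{g(x, y_0, z_0), g(x, y_1, z_1)\} \leq u(x)
\]
for every $x \in \Omega$, while the defining relation of $g^*$ forces equality at $x_0$; the containment $g(\overline\Omega, y_\theta, z_\theta) \subset J$ inherits the same maximum bound from $y_0$ and $y_1$, so $y_\theta \in Yu(x_0)$. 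Part (2) follows in the same spirit from the dual LMP, which is available because A1$^*$ renders $g^*$ a generating function in its own right, with the roles of $g$- and $g^*$-segments exchanged. For the contact set $\{u = g(\cdot, y_0, z_0)\}$ I pick two of its points, form the $g$-segment with respect to $(y_0, z_0)$, and use the dual LMP to rule out any $g$-support of $u$ at an intermediate point that would produce a strict inequality with $g(\cdot, y_0, z_0)$; for $\{u < g(\cdot, y_0, z_0 - h)\}$ the same construction applies, with the compact containment hypothesis ensuring the $g$-segments stay in $\Omega$ where the inequalities for $u$ hold.

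Part (3) splits into two inclusions. Given $y \in Yu(x_0)$ with $g$-support $g(\cdot, y, z)$ and $p := g_x(x_0, y, z)$, Taylor expansion of $g$ at $x_0$ combined with $u \geq g(\cdot, y, z)$ gives $p \in \partial u(x_0)$, and by construction $y = Y(x_0, u_0, p)$; this shows $Yu(x_0) \subset Y(x_0, u_0, \partial u(x_0))$. For the reverse, semiconvexity of $u$ lets me write any $p \in \partial u(x_0)$ as a convex combination of limits $\lim_k Du(x_k)$ over sequences of differentiability points $x_k \to x_0$. At each such $x_k$ the subgradient is a single point and $y_k := Y(x_k, u(x_k), Du(x_k))$ is the unique element of $Yu(x_k)$; a standard closedness argument (supports pass to the limit) together with continuity of $Y$ puts each limit $Y(x_0, u_0, \lim_k Du(x_k))$ into $Yu(x_0)$. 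Since $p \mapsto Y(x_0, u_0, p)$ sends straight line segments in $p$-space to $g^*$-segments with respect to $(x_0, u_0)$, the ordinary convex hull on the $p$-side corresponds to the $g^*$-convex hull on the $y$-side, and part (1) closes the argument.

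The main obstacle is part (2): although it is symmetric in spirit to part (1), the dual LMP is not literally among the stated hypotheses and must first be deduced from the LMP via the symmetry that A1$^*$ confers on $g^*$, which requires some careful unpacking of the $g$- and $g^*$-segment definitions and of how $g^*$-supports on the dual side translate back to $g$-supports. Once that is in place, parts (1) and (3) are mostly bookkeeping around the identification $g(x, y, z) = u(x_0) + p \cdot (x - x_0) + O(|x - x_0|^2)$ and the equations \eqref{eq:yz-def1}--\eqref{eq:yz-def2} defining $Y$ and $Z$.
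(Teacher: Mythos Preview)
The paper does not actually prove this lemma: it is stated as ``well known'' with a citation to \cite{LoeperTrudinger21,Trudinger20}, so there is no proof in the paper to compare your approach against line by line. That said, the paper's appendix does supply the key technical input for part (2), namely Lemma \ref{lemma:lmp-gqq}, which shows that LMP implies the quantitative quasiconvexity inequality \eqref{eq:gqq}. Your sketches for parts (1) and (3) are correct and standard.

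For part (2) there is a genuine conceptual gap in your justification. You claim the dual LMP ``is available because A1$^*$ renders $g^*$ a generating function in its own right.'' That is not the right reason. A1$^*$ (together with the other structural conditions) does make $g^*$ a generating function, but the LMP is an \emph{additional} curvature-type constraint, not a consequence of being a generating function; there is no purely structural symmetry that transfers LMP from $g$ to $g^*$. The actual mechanism is: LMP for $g$ implies the A3w condition (this is the Loeper--Trudinger result \cite{LoeperTrudinger21} invoked in the paper's appendix), A3w is symmetric under the $g \leftrightarrow g^*$ duality, and A3w for $g^*$ gives the dual LMP. Equivalently, and this is how the paper packages it, LMP implies the quasiconvexity statement \eqref{eq:gqq} along $g$-segments, which is exactly what you need to show that $\theta \mapsto g(x_\theta,y_1,z_1) - g(x_\theta,y_0,z_0)$ cannot become positive in the interior if it is nonpositive at the endpoints. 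Once you replace your A1$^*$ justification with an appeal to \eqref{eq:gqq} (or to A3w and its self-duality), your argument for part (2) goes through.
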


Now we have the required terminology to state the conditions for strict convexity. We assume $\lambda, \Lambda \in \mathbf{R}$ are positive. 

\begin{theorem}\label{thm:strict-convexity}
  Assume $g$ is a generating function satisfying the Loeper maximum principle. Assume $\overline{\Omega} \subset U$, and $u:\Omega \rightarrow \mathbf{R}$ is a $g$-convex function satisfying that for every $E \subset \Omega$
  \begin{equation}
    \label{eq:caff-style-ineq}
       \lambda \vert E\vert \leq \vert Yu(E)\vert \leq \Lambda \vert E \vert. 
  \end{equation}
  If $U$ and $Yu(\Omega)$ are, respectively,  $g$ and $g^*$-convex with respect to $u$, then $u$ is strictly $g$-convex. 
\end{theorem}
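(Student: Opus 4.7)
The approach is to argue by contradiction, adapting Chen--Wang's proof \cite{ChenWang16} of strict $c$-convexity in optimal transport to the GJE setting. Suppose $u$ is not strictly $g$-convex at some $x_0 \in \Omega$: there is a $g$-support $g(\cdot,y_0,z_0)$ whose contact set
\[ K := \{x \in \overline{\Omega} : u(x) = g(x,y_0,z_0)\} \]
contains at least two points. Consider the sub-level sets
\[ S_h := \{x \in \Omega : u(x) < g(x,y_0,z_0) + h\}, \qquad h>0, \]
which by Lemma \ref{lem:loeper-consequences} are $g$-convex with respect to $(y_0,z_0)$ whenever compactly contained in $\Omega$, and shrink to $K$ as $h \to 0^+$. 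The goal is to show that two points in $K$ force the family $S_h$ to degenerate in a quantitative way that is incompatible with the two-sided pinching \eqref{eq:caff-style-ineq}.

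I would begin by applying the almost-affine transformation of Section \ref{sec:transformations-1} so that $g$ becomes nearly affine near $(x_0,y_0,z_0)$ and the distinguished support is essentially zero. In these coordinates $g$-convex sets are close to ordinary convex, so John's lemma yields ellipsoids $E_h$ with $c E_h \subset S_h \subset C E_h$. Normalising $E_h$ to the unit ball by an affine map and rescaling $u$ by $h$ gives a family of almost-$g$-convex functions with uniformly pinched Aleksandrov measure; the normalising maps must degenerate as $h \to 0^+$ because $K$ contains more than one point. Two cases then arise. If $K$ is compactly contained in $\Omega$, the $g$-cone sub-differential estimates of Section \ref{sec:g-cones-subd} produce an upper bound on $\vert Yu(S_h) \vert$ in terms of the collapsing geometry of $E_h$ that contradicts the lower bound $\vert Yu(S_h) \vert \geq \lambda \vert S_h \vert$. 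If instead $K$ reaches $\partial \Omega$, the hypothesis $\overline{\Omega} \subset U$ and the $g$-convexity of $U$ with respect to $u$ provide room to extend sub-level sets through the boundary, the $g^*$-convexity of $Yu(\Omega)$ keeps $g$-subdifferential images $g^*$-convex, and the uniform estimates of Section \ref{sec:uniform-estimates} applied to the Dirichlet problem on $S_h$ reduce this case to the previous one. Key set-theoretic lemmas from \cite{FKM13,GuillenKitagawa17} enter at these two steps.

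The main obstacle is bookkeeping: in contrast to optimal transport, where $Y$ depends only on $(x,Du)$, here $Y(x,u,Du)$ depends also on $u$, so coordinate changes, $g$-segments, and $g$-cones must be tracked with this extra dependence. The transformations of Section \ref{sec:transformations-1} are designed precisely to linearise this dependence near a fixed $(x_0,y_0,z_0)$, after which the Chen--Wang scheme ports over with technical modifications. The condition $\overline{\Omega} \subset U$, which replaces a stronger condition in \cite{GuillenKitagawa17}, is exactly what allows the boundary case to be reduced to the interior one.
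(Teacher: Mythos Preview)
Your broad framework is right in spirit, but two concrete pieces are not as you describe and would cause real trouble if pursued.

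First, the interior case: the contradiction does not come from a John-normalised blow-up and ``an upper bound on $|Yu(S_h)|$'' clashing with the lower pinch. After extending $u$ to all of $\overline{U}$ and transforming so the contact set $G$ is convex, the paper fixes an extreme point of $G$ (assumed interior to $U$, taken to be $0$) and plays the lower uniform estimate on the sections $G^h$ against the \emph{close-to-boundary} upper estimate (Theorem~\ref{thm:upper_uniform_close}) on a family of \emph{tilted} sections
\[
D_t=\{u<g(\cdot,te_1,g^*(-ae_1,te_1,0))\}.
\]
The point is that $0$ sits at distance $\epsilon_t\to 0$ from one face of $D_t$, so Theorem~\ref{thm:upper_uniform_close} gives $t\le C\epsilon_t|D_t|^{2/n}$, while $D_t\subset G^{Ct}$ and the lower uniform estimate give $|G^{Ct}|^{2/n}\le Ct$; together these contradict $\epsilon_t\to 0$. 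A pure normalisation/degenerating-ellipsoid argument, without the tilted sections and Lemma~\ref{lem:lower_est_close}, does not close even in the optimal transport case --- this was precisely Chen and Wang's contribution.

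Second, and more seriously, the boundary case does \emph{not} reduce to the interior one via uniform estimates. The paper's Step~2 is a completely different, direct argument in which no $g$-cone or uniform estimate appears. Having shown every extreme point of $G$ lies on $\partial U$, one selects a particular extreme point $\tilde x_0$ by touching $G$ with a very flat paraboloid, chosen so that the normal to the supporting plane at $\tilde x_0$ lies in a cone contained in the $g^*$-image of $\Omega^*$; this is where the $g^*$-convexity of the target is actually used. After rotation one has $te_1\in\overline{\Omega^*}$ for small $t$, and the tilted section $\{u<g(\cdot,te_1,g^*(-\tau e_1,te_1,u(-\tau e_1)))\}$ shrinks to a neighbourhood of $\tilde x_0\in\partial U$ in $G$, hence becomes disjoint from $\overline\Omega$ (this is where $\overline\Omega\subset U$ is used). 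But $te_1\in\Omega^*=Yu(\Omega)$ forces some $x_2\in\overline\Omega$ to have $te_1\in Yu(x_2)$, and the support inequality at $x_2$ then places $x_2$ inside that section --- a contradiction. Your outline misses both the paraboloid selection mechanism and the nature of the contradiction, and the sentence ``reduce this case to the previous one'' points in the wrong direction.
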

It is clear that by redefining $\Gamma$ it suffices there exist any $U' \subset U$ with $\Omega \subset\subset U'$ and $U'$ $g$-convex with respect to $u$.

We give a short proof of the following consequence of strict $g$-convexity.
\begin{corollary}\label{cor:c1}
  Assume $g$ is a generating function satisfying the Loeper maximum principle. Assume $u:\Omega \rightarrow \mathbf{R}$ is a strictly $g$-convex solution of \eqref{eq:caff-style-ineq}. Then $u \in C^{1}(\Omega)$. 
\end{corollary}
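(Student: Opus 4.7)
The plan is to show that the ordinary subdifferential $\partial u(x_0)$ is a singleton at every $x_0 \in \Omega$. By Lemma~\ref{lem:loeper-consequences}(3) and the fact that $p \mapsto Y(x_0, u(x_0), p)$ is a diffeomorphism on its domain (from A1 together with the non-degeneracy of $E$ in A2), this reduces to showing $Yu(x_0)$ is a singleton. Since $u$ is semiconvex (as noted after the definition of $g$-convexity), once $\partial u$ is known to be single-valued at every point, upper semicontinuity of $\partial u$ delivers continuity of $Du$, hence $u \in C^1(\Omega)$.

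Suppose for contradiction that $Yu(x_0)$ contains two distinct points $y_0, y_1$ for some $x_0 \in \Omega$. By Lemma~\ref{lem:loeper-consequences}(1), the $g^*$-segment $\{y_\theta\}_{\theta \in [0,1]}$ with respect to $x_0, u(x_0)$ joining $y_0$ to $y_1$ lies in $Yu(x_0)$, so each associated $g$-support $g(\cdot, y_\theta, z_\theta)$ with $z_\theta = g^*(x_0, y_\theta, u(x_0))$ touches $u$ at $x_0$. Strict $g$-convexity furnishes some $y_{\theta_0}$ in this family whose support touches $u$ only at $x_0$.

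The contradiction comes from comparing two estimates of $|Yu(B_r(x_0))|$ as $r \to 0^+$. The upper bound is immediate from \eqref{eq:caff-style-ineq}, giving $|Yu(B_r(x_0))| \leq C_n \Lambda r^n$. For a lower bound, the strictness of the support at $y_{\theta_0}$ pins $Yu^{-1}(y_{\theta_0}) = \{x_0\}$; combined with upper semicontinuity of the multivalued inverse $Yu^{-1}$, propagated along the $g^*$-segment by continuity and the Loeper maximum principle, this forces $Yu(B_r(x_0))$ to contain a tubular neighborhood about a subsegment of $\{y_\theta\}$ whose transverse width is of order $r$. This gives $|Yu(B_r(x_0))| \geq c\, r^{n-1}$ for some $c > 0$ depending on the length of the segment. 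The two bounds combine to $c \leq C_n \Lambda r$, which fails for $r$ sufficiently small.

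The main obstacle is justifying the transverse thickening inequality $|Yu(B_r(x_0))| \geq c\, r^{n-1}$. This is the $g$-convex substitute for the classical fact that, for a convex function on $\mathbf{R}^n$, the subdifferential image of a ball of radius $r$ around a non-differentiability point whose subdifferential contains a segment has $n$-dimensional Lebesgue measure at least of order $r^{n-1}$. Transferring this to the $g$-setting uses the uniform $C^3$ bounds on $g$ from A0 and A2, together with the diffeomorphism property of $p \mapsto Y(x, u, p)$, to control the modulus of continuity of $Yu^{-1}$ in directions transverse to the $g^*$-segment at $y_{\theta_0}$.
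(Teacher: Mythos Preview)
Your argument has a real gap at its heart: the lower bound $|Yu(B_r(x_0))| \geq c\, r^{n-1}$ is not established, and the justification you offer is circular. Upper semicontinuity of $Yu^{-1}$ together with $C^3$ control of $g$ gives only qualitative information --- that $Yu(B_r(x_0))$ contains \emph{some} open neighborhood of the segment $\{y_\theta\}$ --- not that this neighborhood has transverse width comparable to $r$. A linear-in-$r$ width would require a Lipschitz modulus for $Yu^{-1}$ (in the Monge--Amp\`ere picture, $u^* \in C^{1,1}$), which is far stronger than what strict $g$-convexity supplies. Concretely, in the Monge--Amp\`ere case take $u^*(p_1,p') = \phi(p_1) + \tfrac{2}{3}|p'|^{3/2}$ with $\phi$ convex, $C^1$, and flat on an interval; the Legendre dual $u$ is then strictly convex, fails to be $C^1$ at the origin, yet $|\partial u(B_r(0))| \sim r^{2(n-1)}$, not $r^{n-1}$. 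Of course this $u$ violates the upper bound in \eqref{eq:caff-style-ineq} (it carries singular Monge--Amp\`ere mass on $\{x_1=0\}$), and that is precisely the point: the thickening estimate you need cannot come from strict convexity and smoothness of $g$ alone, and your proposed derivation never invokes \eqref{eq:caff-style-ineq} to obtain it. Note also that your entire argument uses only the upper bound $\Lambda$; the lower bound $\lambda$ appears nowhere, which is another sign that an ingredient is missing.

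The paper argues quite differently. It selects an \emph{extreme} point $p_0$ of $\partial u(x_0)$, passes to the transformed coordinates of Section~\ref{sec:transformations-1}, and studies the sections $G^h$. The segment in $\partial u(x_0)$ forces $G^h$ to be thin in one direction (width $\lesssim h$), while the extreme-point property makes $G^h$ long in another (length $R(h)h$ with $R(h)\to\infty$). Then the uniform estimates of Section~\ref{sec:uniform-estimates} --- Theorem~\ref{thm:upper_uniform_close} (which uses $\Lambda$) and Theorem~\ref{thm:lower_uniform} (which uses $\lambda$) --- combine to give $h \leq C R(h)^{-1}|G^h|^{2/n}$ and $|G^h|^{2/n}\leq Ch$, a contradiction as $h\to 0$. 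If you want to salvage your volume-comparison strategy, you would need to feed both sides of \eqref{eq:caff-style-ineq} into the tubular-neighborhood estimate, and doing so rigorously essentially reproduces these section estimates.
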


Guillen and Kitagawa have proved a stronger conclusion, that strictly $g$-convex solutions of \eqref{eq:caff-style-ineq} are in $C^{1,\alpha}(\Omega)$. An obvious consequence is the $C^{1,\alpha}(\Omega)$ regularity holds under the new domain hypotheses in Theorem \ref{thm:strict-convexity}.

Our global regularity result uses two additional conditions, called A4w and A5, on the generating function. We introduce these in Section \ref{sec:global-regularity}, though note the A5 condition bounds the gradient of solutions in terms of a particular constant $K_0$. For global regularity the PDE is 
\begin{equation}
  \label{eq:gjef}
  \det DY(\cdot,u,Du) = \frac{f(\cdot)}{f^*(Y(\cdot,u,Du))} \quad \text{in }\Omega,
\end{equation}
for positive densities $f,f^*$ and we assume the second boundary value problem \eqref{eq:2bvp} is satisfied. Then a necessary condition for the existence of a $C^2(\Omega)$ $g$-convex solution is the mass balance condition
\begin{equation}
  \label{eq:mb}
  \int_{\Omega}f = \int_{\Omega^*}f^*.
\end{equation}

\begin{theorem}\label{thm:global-regularity}
  Let $\overline{\Omega} \subset U$, $\overline{\Omega^*} \subset V$ be $C^4$ domains, $g$ a $C^4$ generating function satisfying LMP, A4w, A5, and, finally,  $f \in C^2(\overline{\Omega}),f^* \in C^2(\overline{\Omega^*})$ be positive functions satisfying \eqref{eq:mb}.  Suppose $u \in C^0(\overline{\Omega})$ is a $g$-convex Aleksandrov solution of \eqref{eq:gjef},\eqref{eq:2bvp} satisfying the following property: there is $x_0 \in \Omega$ and a support $g(\cdot,y_0,z_0)$ at $x_0$ such that $J_0:= [\inf_{\Omega}g(\cdot,y_0,z_0)-K_0\text{diam}(\Omega),\sup_{\Omega}g(\cdot,y_0,z_0)+K_0\text{diam}(\Omega)] \subset J$. Assume $\Omega^*$ is uniformly-$g^*$-convex with respect to points in $\Omega\times J_0$ and $\Omega$ is uniformly-$g$-convex with respect to points in $\Omega^*\times g^*(\Omega,\Omega^*,J_0)$.   Then $u \in C^3(\overline{\Omega})$. 
\end{theorem}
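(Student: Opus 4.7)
The strategy, signalled in the introduction, has three parts: apply Theorem \ref{thm:strict-convexity} together with Trudinger's interior regularity \cite{Trudinger20} to upgrade $u$ to $C^{1,1}_{\text{loc}}(\Omega)$; modify Jiang and Trudinger's existence proof \cite{JiangTrudinger18} to construct a $C^3(\overline\Omega)$ classical solution $\tilde u$ intersecting $u$; and conclude via the author's uniqueness of intersecting $C^{1,1}_{\text{loc}}$ solutions \cite{Rankin2020} that $u \equiv \tilde u$.

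First I would verify the hypotheses of Theorem \ref{thm:strict-convexity} for $u$. Positivity and continuity of $f, f^*$ on the compact domains give the two-sided bound \eqref{eq:caff-style-ineq}, and LMP is assumed. The height hypothesis $J_0 \subset J$ combined with the A5 gradient bound $K_0$ ensures that the values of $u$, and hence the parameters $(y,z)$ of every $g$-support to $u$, fall inside $\Omega^* \times g^*(\Omega,\Omega^*,J_0)$. The assumed uniform $g^*$-convexity of $\Omega^*$ with respect to points in $\Omega \times J_0$ then yields the $g^*$-convexity of $Yu(\Omega)$ with respect to $u$; likewise, uniform $g$-convexity of $\Omega$ with respect to $\Omega^* \times g^*(\Omega,\Omega^*,J_0)$, together with the remark following Theorem \ref{thm:strict-convexity}, gives a small enlargement $U' \supset \overline\Omega$ that is $g$-convex with respect to $u$. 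Theorem \ref{thm:strict-convexity} then yields strict $g$-convexity of $u$, and Trudinger's interior regularity result \cite{Trudinger20} improves this to $u \in C^{2,\alpha}_{\text{loc}}(\Omega) \subset C^{1,1}_{\text{loc}}(\Omega)$.

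The heart of the proof is the second step: producing $\tilde u \in C^3(\overline\Omega)$ solving \eqref{eq:gjef},\eqref{eq:2bvp} with $\tilde u(x_0) = u(x_0)$. Jiang and Trudinger \cite{JiangTrudinger18} establish existence by the method of continuity under exactly the hypotheses assumed here (LMP, A4w, A5, uniform $g$- and $g^*$-convexity of the domains, mass balance); their solution is determined modulo a one-parameter normalization of the height of a chosen $g$-support. The height condition $J_0 \subset J$, combined with the A5 gradient bound, provides precisely enough room so that, as one varies the normalization and runs $t$ along the continuity path, the $g$-supports of the developing solutions remain inside $\Gamma$. One may then run Jiang and Trudinger's continuity method with the normalization fixed by $\tilde u(x_0) = u(x_0)$. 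Combining with the first step, $u$ and $\tilde u$ are two $C^{1,1}_{\text{loc}}$ $g$-convex Aleksandrov solutions of the same boundary value problem that agree at $x_0$, so \cite{Rankin2020} forces $u \equiv \tilde u$, giving $u \in C^3(\overline\Omega)$. I expect the principal obstacle to lie in this middle step: verifying that each a priori estimate of \cite{JiangTrudinger18} remains valid when the free normalization is used to impose the intersection constraint, and that the height assumption leaves adequate margin to keep the path of solutions within the domain of $g$.
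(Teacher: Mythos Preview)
Your three-step outline---strict $g$-convexity via Theorem~\ref{thm:strict-convexity} (or rather Remark~\ref{rem:unif-dom}), then interior regularity from \cite{Trudinger20}, then uniqueness of intersecting solutions from \cite{Rankin2020}---matches the paper exactly, and your verification of the hypotheses in step one is fine.

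The gap is in your middle step. You write that Jiang--Trudinger's solution ``is determined modulo a one-parameter normalization of the height of a chosen $g$-support'' and that one may simply ``run Jiang and Trudinger's continuity method with the normalization fixed by $\tilde u(x_0)=u(x_0)$.'' For GJEs this is precisely what fails: as the paper stresses (citing \cite{KarakhanyanWang2010}), there is \emph{no} uniqueness up to a constant, so there is no a priori reason a one-parameter family of solutions sweeps through the value $u(x_0)$ at $x_0$. Adding a constant to a $g$-convex solution does not yield another solution, and the method of continuity in \cite{JiangTrudinger18} does not come with a free height parameter you can tune.

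The paper's actual construction (Lemma~\ref{lem:approx-prob}) is a genuine modification of the existence argument, using degree theory rather than the method of continuity. The idea is to perturb the right-hand side of \eqref{eq:gjef} by a factor $e^{\eta_a(\cdot)(v-u_0)}$, where $\eta_a$ is a cutoff supported in $B_a(x_0)$ and $u_0=u_\epsilon$ is the approximate solution from Lemma~\ref{lem:approx-exist}. The mass balance condition then \emph{forces} any solution of this perturbed problem to intersect $u_0$ somewhere in $B_a(x_0)$---if $v>u_0$ on $B_a$ the perturbed right-hand side strictly exceeds $f$ there and equals $f$ elsewhere, contradicting $\int_{\Omega^*}f^*=\int_\Omega f$ after the change of variables. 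Existence for the perturbed problem is obtained via the degree theory of Li--Liu--Nguyen \cite{LLN17}, which requires showing the $t=0$ problem has a unique solution and that its linearisation is uniquely solvable (this is where the auxiliary parameter $\tau$ and the A4w condition enter). Since the $C^{3,\alpha}$ estimates are uniform in $a$, one sends $a\to 0$ and the intersection point converges to $x_0$. The height hypothesis $J_0\subset J$ is used exactly where you suspected: to guarantee, via A5 and the forced intersection with $u_0$, that the solutions along the homotopy stay inside $J$.
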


\section{Transformations}
\label{sec:transformations-1}
In this section we introduce transformations which leave the generating function close to $g(x,y,z) = x\cdot y -z $. Assume $x_0 \in U,y_0 \in V,u_0\in J$ and $h\geq0$ are given. For context, we usually have a $g$-convex function $u:\Omega \rightarrow \mathbf{R}$ and take $x_0 \in \Omega$ with $y_0 \in Yu(x_0),$ $u_0 = u(x_0)$ and shift the support to $g(\cdot,y_0,g^*(x_0,y_0,u_0+h))$. Without loss of generality $x_0,y_0,u_0=0$.  Set $z_h=g^*(0,0,h)$. After replacing $g$ by the function $(x,y,z) \mapsto g(x,y,z+g^*(0,0,h))$ we assume $g(0,0,0) = h$ so $g^*(0,0,h) = 0$. Furthermore by working in the coordinates $y' := E(0,0,0)y$  we have $E(0,0,0) = \text{Id}$. (We recall $E$ is the matrix from A2.)

\subsection*{Transformed coordinates}
\label{sec:transf-coord}

Define
\begin{align}
  q(x) &:= g_z(0,0,0)\left[\frac{g_y}{g_z}(x,0,0)-\frac{g_y}{g_z}(0,0,0)\right],\label{eq:xdef}\\
  p(y) &:= g_x(0,y,g^*(0,y,h))-g_x(0,0,0). \label{eq:ydef}
\end{align}
Conditions A1,A1$^*$, and A2  imply $x \mapsto q(x)$ and $y\mapsto p(y)$ are diffeomorphisms, so we may write $q=q(x)$, or $x=x(q)$ as necessary, similarly for $y$ and $p$. The Jacobian of the first transform is
\begin{equation}
  \label{eq:x-jac}
   \frac{\partial q_i}{\partial x_j} =  \frac{g_z(0,0,0)}{g_z(x,0,0)}E_{ji}(x,0,0).
\end{equation}
Because $g_z \det E \neq 0$ on $\overline{\Gamma}$ these transformations are non-degenerate. It is useful to introduce a quantity which quantifies this nondegeneracy and how far the generating function is from $g(x,y,z) = x\cdot y -z$. Put
\begin{align}
  &E^+ = \text{sup}_{\Gamma}\sup_{\xi \in \mathbf{S}^{n-1}}\vert E_{ij}\xi_i \vert  &&E^{-}=\text{inf}_{\Gamma}\inf_{\xi \in \mathbf{S}^{n-1}} \vert E_{ij}\xi_i \vert,\\
  &C_z = \sup_{\Gamma}\vert g_z\vert  && c_z = \inf_{\Gamma}\vert g_z\vert ,
\end{align}
and set $C_g = \max\{E^+,C_z,1/E^-,1/c_z\}$, where if $g(x,y,z) = x\cdot y -z$ we have $C_g = 1$. $C_g$ is used to quantify the effect of $g$ not being affine.  For example if $D\subset U$ and $D_q$ is its image under \eqref{eq:xdef}, then by \eqref{eq:x-jac} there holds
\begin{equation}
  \label{eq:xxbarest}
 C_g^{-3}\vert D \vert \leq \vert  D_q \vert \leq C_g^3\vert D\vert  . 
\end{equation}
A similar estimate, depending only on $C_g$, holds for the $y$ to $p$ transformation.

\subsection*{Generating function transformation}
\label{sec:gener-funct-nearly}

Set
\[ \tilde{g}(x,y,z) = \frac{g_z(0,0,0)}{g_z(x,0,0)}[g(x,y,g^*(0,y,h-z)) - g(x,0,0)] ,\]
and subsequently 
\[ \overline{g}(q,p,z) = \tilde{g}(x(q),y(p),z),\]
where $x,q$ and $y,p$ satisfy \eqref{eq:xdef} and \eqref{eq:ydef} respectively. As motivation note in the optimal transport case, where $g(x,y,z) = c(x,y)-z$ for $c$ the cost function, we have
\[ \tilde{g}(x,y,z) = [c(x,y)-c(0,y)] - [c(x,0)-c(0,0)]-z,\]
which is a frequently used transformation \cite{FKM13,LTW15,ChenWang16}. We note Jhaveri \cite{Jhaveri17} has made use of a different transformed generating function. 

The basic facts concerning $\overline{g}$ are summarized in the following lemma. We use the overline notation to denote quantities corresponding to $\overline{g}$.

\begin{lemma}\label{lem:transform_facts}
  Let $g$ be a generating function satisfying the LMP. Then:\\
  (1) $\overline{g}$ is a $C^2$ generating function satisfying the LMP.\\ 
  (2) A function $u$ is $g$-convex if and only if the corresponding function
  \begin{equation}
    \label{eq:u_transform}
         \overline{u}(q):= \frac{g_z(0,0,0)}{g_z(x(q),0,0)}[u(x(q)) - g(x(q),0,0)], 
  \end{equation}
    is $\overline{g}$-convex. Moreover, with $\overline{Y}$ defined for $\overline{g}$ as $Y$ was for $g$, we have  $y \in Yu(x)$ if and only if $p(y) \in \overline{Y}\overline{u}(q(x))$.\\
\end{lemma}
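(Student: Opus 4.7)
The plan is to verify both claims by unwinding definitions, leveraging two facts: the maps $x \leftrightarrow q$ and $y \leftrightarrow p$ are $C^2$ diffeomorphisms by A1, A1$^*$, A2 together with \eqref{eq:x-jac}, and the transformation \eqref{eq:u_transform} is order-preserving at each $x$ because $g_z(0,0,0)/g_z(x,0,0) > 0$.

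For part (1), I would first verify A0--A2 for the intermediate function $\tilde{g}(x,y,z)$ before changing coordinates. Smoothness follows from $C^3$ regularity of $g$ and $g^*$. The map $z \mapsto g^*(0,y,h-z)$ is a $C^3$ diffeomorphism in $z$ by $g_z < 0$, while the additive $-g(x,0,0)$ and multiplicative $g_z(0,0,0)/g_z(x,0,0)$ factors depend only on $x$, so the uniqueness assertions in A1 and A1$^*$ transfer directly from $g$ to $\tilde{g}$. A direct computation of the $\tilde{E}$ matrix via the chain rule shows that $\det \tilde{E} \neq 0$ reduces to $\det E \neq 0$. The change of variables to $q, p$ then preserves A0--A2 by standard pullback under the diffeomorphisms. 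For the LMP, the coordinate $p(y) = g_x(0,y,g^*(0,y,h)) - g_x(0,0,0)$ is engineered exactly so that $\overline{g}^*$-segments in $p$ with respect to the base point $(q_0,\overline{u}_0)=(0,h)$ are straight lines, which correspond under $y \leftrightarrow p$ to $g^*$-segments in $y$ with respect to $(0,h)$; the LMP inequality for $\overline{g}$ then reduces to the LMP inequality for $g$ after undoing the order-preserving $u$-transformation and monotone $z$-substitution.

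For part (2), set $z^* := h - g(0,y_0,z_0)$ so that $g^*(0,y_0,h-z^*) = z_0$. Unpacking the definitions of $\overline{u}$ and $\overline{g}$ yields
\[ \overline{u}(q) - \overline{g}(q,p(y_0),z^*) = \frac{g_z(0,0,0)}{g_z(x(q),0,0)}\bigl[u(x(q)) - g(x(q),y_0,z_0)\bigr], \]
and the prefactor is strictly positive. Consequently $g(\cdot,y_0,z_0)$ is a $g$-support of $u$ at $x_0$ with equality if and only if $\overline{g}(\cdot,p(y_0),z^*)$ is a $\overline{g}$-support of $\overline{u}$ at $q(x_0)$ with equality. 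The containment condition \eqref{eq:containment} likewise transfers because the $z$-range of $g(\overline{\Omega},y_0,z_0)$ and that of $\overline{g}(\overline{\Omega},p(y_0),z^*)$ are related by the same monotone substitution. Combining the support correspondence with the definitions of $Y$ and $\overline{Y}$ via \eqref{eq:yz-def1}--\eqref{eq:yz-def2} gives $y \in Yu(x)$ if and only if $p(y) \in \overline{Y}\overline{u}(q(x))$.

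The main obstacle is LMP preservation, since LMP is a global synthetic inequality rather than a pointwise algebraic one. The strategy is to show that each component of the transformation preserves it: the $x$-dependent additive and multiplicative factors preserve the relative ordering of support values at each $x$; the monotone $z$-substitution merely relabels supports without changing which lies above another; and the coordinate changes $q(x), p(y)$ identify $g$- and $g^*$-segments with $\overline{g}$- and $\overline{g}^*$-segments. Combined, the synthetic inequality for $\overline{g}$ along $\overline{g}^*$-segments in $p$ pulls back to the synthetic inequality for $g$ along $g^*$-segments in $y$, which holds by hypothesis.
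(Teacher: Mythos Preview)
Your approach is the paper's approach: direct calculation, with part (2) reduced to the pointwise identity
\[
\overline{u}(q) - \overline{g}(q,p(y_0),z^*) = \frac{g_z(0,0,0)}{g_z(x(q),0,0)}\bigl[u(x(q)) - g(x(q),y_0,z_0)\bigr]
\]
and part (1) checked axiom by axiom. The paper verifies A1, A1$^*$ by writing out $\tilde{g}_x$ and $\tilde{g}_y/\tilde{g}_z$ explicitly and checks A2 via $\det D_P\overline{Y}\neq 0$ rather than computing $\overline{E}$ directly, but these are cosmetic differences.

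The one place where your outline is looser than it should be is the LMP. Your first remark --- that $p(y)$ is engineered so that $\overline{g}^*$-segments at the \emph{particular} base point $(0,h)$ are straight lines --- is true but not enough: LMP must hold at every base point $(q_0,\overline{u}_0)$. Your final paragraph asserts the general correspondence between $g^*$-segments and $\overline{g}^*$-segments, which is what is actually needed, but you do not say why it holds. The paper closes this by computing $\overline{Y}(q,U,P)$ explicitly and observing that its dependence on $P$ is \emph{affine} (through a factor $\tfrac{g_z(x,0,0)}{g_z(0,0,0)}\tfrac{\partial q}{\partial x}$); hence a line segment $P_\theta$ is sent to a line segment in the $p$-argument of $Y$ for $g$, which is exactly a $g^*$-segment at the corresponding base point $(x,l_1(U))$. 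That affine dependence is the computational crux, and without it the reduction of LMP for $\overline{g}$ to LMP for $g$ is only a plausible heuristic.
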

The proof is a direct calculation which we defer to Appendix \ref{sec:omitted-proofs}. Again, the effect of these transformations can be measured in terms of $C_g$. In particular for $\overline{u}$ as in Lemma \ref{lem:transform_facts} 
\begin{equation}
  \label{eq:u-cg-dep}
C_g^{-2}\vert \overline{u}(q)\vert  \leq  \vert u(x(q)) - g(x(q),0,0)\vert  \leq C_g^2\vert \overline{u}(q)\vert 
\end{equation}
We emphasize this because we will frequently consider $ \overline{u}(q)$ in place of $u(x(q)) - g(x(q),0,0)$ in certain estimates. The estimate \eqref{eq:u-cg-dep} implies this is acceptable as these quantities are comparable up to the constant $C_g$. 
Near the origin $\overline{g}$ is close to a plane in the sense of the following lemma. Such expansions are important for the regularity in optimal transport \cite{LTW10,LTW15}.  

\begin{lemma}\label{lem:identities}
Let $g$ be a generating function and $\overline{g}$ be the transformed generating function. There are $C^1$ functions $a^{(\alpha)}(q,p),b^{(\beta)}(q,p),f^{(\gamma)}(q,p,z)$ for $\alpha,\beta = 1,2,3$ and $\gamma=1,2$, which arise as Taylor series remainder terms, such that $\overline{g}$ satisfies the following identities
  \begin{align}
    \label{eq:x-exp}
    \overline{g}(q,p,z) &=q\cdot p-z +a^{(1)}_{ijk}(q,p)q_iq_jp_k+f^{(1)}(q,p,z)z,  \\
 \label{eq:y-exp}   \overline{g}(q,p,z) &=q\cdot p-z +a^{(2)}_{ijk}(q,p)q_ip_jp_k+f^{(1)}(q,p,z)z.
  \end{align}

  Here  $f^{(1)}$ satisfies the inequalities 
  \begin{align}
    \label{eq:f-est-1}
    -C^- \leq -1+f^{(1)}(q,p,z) &\leq -C^+,\\
   \text{ and } \label{eq:f-est-2}  \vert f^{(1)}(q,p,z)\vert &\leq C\vert q \vert
  \end{align}
  for positive constants $C^{\pm}$ depending only on $C_g$ and $C$ depending, in addition, on $sup \vert g_{xz} \vert$. Furthermore $f^{(1)}$ satisfies the equality
  \begin{align}
    \label{eq:f-exp}
    f^{(1)}(q,p,z) = b^{(1)}_{ij}q_iq_j+b^{(2)}_{ij}q_ip_j+b^{(3)}_{ij}p_ip_j + f^{(2)}(q,p,z)z.
  \end{align}
\end{lemma}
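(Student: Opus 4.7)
The proof is a bookkeeping Taylor expansion exploiting the normalizations $g(0,0,0)=h$, $g^*(0,0,h)=0$ and $E(0,0,0)=\mathrm{Id}$ together with the identities~\eqref{eq:gstar-deriv} and~\eqref{eq:x-jac}. First I would record the values of $\overline{g}$ and its relevant first derivatives on the slices $\{q=0\}$ and $\{p=0\}$. Substitution into the definition of $\tilde{g}$ gives $\overline{g}(0,p,z)=-z$ (using $g(0,y,g^*(0,y,h-z))=h-z$) and $\overline{g}(q,0,0)=0$ (using $g^*(0,0,h)=0$). Differentiating at $x=0$, the prefactor $g_z(0,0,0)/g_z(x,0,0)$ multiplies $g(0,y,g^*(0,y,h))-g(0,0,0)=0$ and so contributes nothing; only $g_x(0,y,g^*(0,y,h))-g_x(0,0,0)=p(y)$ survives, yielding $\tilde{g}_x(0,y,0)=p(y)$ and, since $\partial x/\partial q|_{q=0}=\mathrm{Id}$ by \eqref{eq:x-jac} and $E(0,0,0)=\mathrm{Id}$, $\overline{g}_q(0,p,0)=p$. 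The mirror calculation, using \eqref{eq:gstar-deriv} to simplify $g_y(x,0,0)+g_z(x,0,0)g^*_y(0,0,h)=g_z(0,0,0)\bigl[g_y(x,0,0)/g_z(x,0,0)-g_y(0,0,0)/g_z(0,0,0)\bigr]$, gives $\tilde{g}_y(x,0,0)=q(x)$ and hence $\overline{g}_p(q,0,0)=q$.

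Next I would peel off the $z$-variable. A chain-rule computation gives
\[
\overline{g}_z(q,p,z)=-\frac{g_z(0,0,0)\,g_z(x,y,g^*(0,y,h-z))}{g_z(x,0,0)\,g_z(0,y,g^*(0,y,h-z))},
\]
a product of two ratios of $g_z$'s each lying in $[c_z/C_z,\,C_z/c_z]$. Writing $\overline{g}(q,p,z)=\overline{g}(q,p,0)+z\int_0^1\overline{g}_z(q,p,tz)\,dt$ and setting $f^{(1)}(q,p,z):=1+\int_0^1\overline{g}_z(q,p,tz)\,dt$ produces $\overline{g}(q,p,z)=\overline{g}(q,p,0)-z+zf^{(1)}(q,p,z)$ together with \eqref{eq:f-est-1}. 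Setting $x=0$ in the formula for $\overline{g}_z$ collapses both ratios to $1$, so $\overline{g}_z(0,p,z)\equiv-1$ and hence $f^{(1)}(0,p,z)\equiv0$; the linear bound \eqref{eq:f-est-2} then follows from the fundamental theorem of calculus in $q$, with $\partial_q \overline{g}_z$ controlled by $\sup|g_{xz}|$ through the chain rule via $x(q)$. Setting $y=0$ similarly gives $\overline{g}_z(q,0,z)\equiv-1$ and hence $f^{(1)}(q,0,0)=0$. Thus $f^{(1)}(\cdot,\cdot,0)$ vanishes with its first derivatives at the origin, and Taylor's theorem with integral remainder writes it as the quadratic form $b^{(1)}_{ij}q_iq_j+b^{(2)}_{ij}q_ip_j+b^{(3)}_{ij}p_ip_j$, while $f^{(1)}(q,p,z)=f^{(1)}(q,p,0)+zf^{(2)}(q,p,z)$ with $f^{(2)}(q,p,z):=\int_0^1 f^{(1)}_z(q,p,tz)\,dt$ completes \eqref{eq:f-exp}.

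Finally I would produce the two expansions of $F(q,p):=\overline{g}(q,p,0)-q\cdot p$. Stage one has given $F(0,p)=F(q,0)=F_q(0,p)=F_p(q,0)=0$. Taylor with integral remainder to second order in $q$ about $q=0$ yields $F(q,p)=q_iq_jR_{ij}(q,p)$ with $R_{ij}(q,p):=\int_0^1(1-t)F_{q_iq_j}(tq,p)\,dt$. Since $F(q,0)\equiv0$, differentiating twice in $q$ gives $R_{ij}(q,0)\equiv0$, so Hadamard's lemma in $p$ factors $R_{ij}(q,p)=p_k a^{(1)}_{ijk}(q,p)$, producing \eqref{eq:x-exp}; the symmetric argument, Taylor in $p$ and then Hadamard in $q$, gives \eqref{eq:y-exp}. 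The proof has no conceptual obstacle; the only subtlety is the identification in stage one that $\tilde{g}_x(0,y,0)$ and $\tilde{g}_y(x,0,0)$ are precisely the coordinate changes $p(y)$ and $q(x)$, which is what makes the remainders factor in the tensor forms asserted in the lemma. Once this is verified, everything else is Taylor's theorem applied systematically.
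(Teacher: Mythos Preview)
Your argument is correct and follows essentially the same route as the paper's proof: peel off the $z$-dependence via $f^{(1)}=1+\int_0^1\overline{g}_z(\cdot,\cdot,tz)\,dt$, use the slice identities $\overline{g}_z(0,p,\cdot)=-1$ and $\overline{g}_z(q,0,0)=-1$ to kill the first-order part of $f^{(1)}(\cdot,\cdot,0)$, and then expand $c(q,p):=\overline{g}(q,p,0)$ (your $F+q\cdot p$) using exactly the identities $c(0,p)=c(q,0)=0$, $c_q(0,p)=p$, $c_p(q,0)=q$ that the paper records as \eqref{eq:r1}--\eqref{eq:r2}. One small slip: your claim $\overline{g}_z(q,0,z)\equiv-1$ for all $z$ is false in general (the ratio $g_z(x,0,w)/g_z(x,0,0)$ need not be independent of $x$ when $w=g^*(0,0,h-z)\neq0$); only $\overline{g}_z(q,0,0)=-1$ holds, but that is all you actually use to get $f^{(1)}(q,0,0)=0$.
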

\begin{proof}
  First, write
  \begin{align}
    \label{eq:exp1}
    \overline{g}(q,p,z) = \overline{g}(q,p,0)+\int_0^1 \overline{g}_z(q,p,t  z)z \ dt.
  \end{align}
  We take $f^{(1)}(q,p,z) = (1+\int_0^1 \overline{g}_z(q,p,t  z) \ dt)z$. Then $f^{(1)}$ satisfies \eqref{eq:f-est-1} because by a direct calculation $- C_g^4\leq \overline{g}_z \leq - C_g^{-4} $. Note here, and throughout this proof, we rely on the calculations \eqref{eq:gstar-deriv}. Next, because $\overline{g}_z(0,p,z) = -1$, \eqref{eq:f-est-2} is  the Lipschitz continuity of this quantity, as guaranteed by A0. To obtain \eqref{eq:f-exp} we expand with a Taylor series in $z$, then in $q,p$ and obtain 
  \begin{align}
    \label{eq:6}
    \overline{g}_z&(q,p,z) = \overline{g}_z(q,p,0)z+\frac{1}{2}\overline{g}_{zz}(q,p,\tau z)z^2\\
    &= \overline{g}_z(0,0,0)z+\overline{g}_{q_i,z}(0,0,0)q_iz+\overline{g}_{p_i,z}(0,0,0)p_iz+g_{zz}(q,p,\tau z)z^2\label{eq:full-exp}\\
\nonumber    &\quad+z[\overline{g}_{q_i,p_j,z}(q_t,p_t,0)q_ip_j+\overline{g}_{q_iq_j,z}(q_t,p_t,0)q_iq_j+\overline{g}_{p_ip_j,z}(q_t,p_t,0)p_ip_j],
  \end{align}
  where $t,\tau \in (0,1)$, $q_t = tq$ and similarly for $p_t$.
 Using $\overline{g}_z(0,p,0) = \overline{g}_z(q,0,0) = -1,$ and subsequently  $\overline{g}_{q_i,z}(0,0,0) = \overline{g}_{p_i,z}(0,0,0) = 0$, \eqref{eq:full-exp} implies \eqref{eq:f-exp}. Whilst we've not explicitly used it, the integral form of the remainder term implies the coefficients of $q_iq_j,q_ip_j,p_ip_j$ are $C^1$. 

  We're left to deal with the term $\overline{g}(q,p,0)$ in \eqref{eq:exp1}. Set $\tilde{c}(x,y) := \tilde{g}(x,y,0)$ and $c(q,p) = \tilde{c}(x(q),y(p))$. This suggestive notation indicates all the following calculations are based on the optimal transport case \cite{FKM13,ChenWang16,LTW15}.  By direct computation $\tilde{c}$ satisfies $\tilde{c}_x(0,y) = p$ and $\tilde{c}_y(x,0) = q$ along with $\tilde{c}(x,0) \equiv 0, $ $\tilde{c}(0,y) \equiv 0$ and $\tilde{c}_{i,j}(0,0) = \delta_{ij}$. It's these identities that justify our inclusion of the $g_z$ terms in the definitions of $q,p,\overline{g}$. They imply
  \begin{align}
 \label{eq:r1}   &c(0,0) = 0 &&c_{q}(0,0) = 0 &&&c_{p}(0,0) = 0\\
\label{eq:r2}    &c_{p_i,q_j}(0,0) = \delta_{ij} && c_{qq,p}(q,0) = 0 &&& c_{q,pp}(0,p) = 0.
  \end{align}
  Thus via a Taylor series
  \begin{align}
    \label{eq:t1}    c(q,p) &= c(0,p) + c_{q_i}(0,p)q_i + c_{q_i,q_j}(tq,p)q_iq_j,
  \end{align}
  As usual $t,\tau \in (0,1)$ (we're about to use $\tau$). More Taylor series and we obtain 
  \begin{align}
    \label{eq:t2}  c_{q_i}(0,p) &= c_{q_i}(0,0)+c_{q_i,p_j}(0,0)p_j + c_{q_i,p_jp_k}(0,\tau p)p_jp_k =  p_i\\
  \label{eq:t3}   c_{q_iq_j}(tq,p) &=  c_{q_iq_j}(tq,0) + c_{q_iq_j,p_k}(tq,\tau p)p_k = a_{ijk}^{(1)}p_k.
  \end{align}
  The integral form of the remainder term implies $a^{(1)}$ is $C^1$. Combining \eqref{eq:t1}-\eqref{eq:t3} yields \eqref{eq:x-exp}. Similar calculations imply \eqref{eq:y-exp}. 
\end{proof}

\begin{remark}
  When $g$ is $C^4$ an additional term in the Taylor series \eqref{eq:t3} yields
  \begin{align}
    \label{eq:xy-exp}
    \overline{g}(q,p,z) &=q\cdot p-z +a^{(3)}_{ij,kl}(q,p)q_iq_jp_kp_l+f^{(1)}(q,p,z)z,
  \end{align}
  though we don't use this expansion here. 
\end{remark}

\section{$g$-cones}
\label{sec:g-cones-subd}
Cones are a basic tool for studying the Monge--Amp\`ere equation. A similar class of functions was introduced in the optimal transport setting by Figalli, Kim, and McCann \cite[Section 6.2]{FKM13}. The defining feature of this so-called $c$-cone is that its $Y$ mapping is concentrated at a point. The generalization to $g$-cones is due to Guillen and Kitagawa \cite{GuillenKitagawa17}. In each case we want estimates for the $Y$-mapping of the generalised cone in terms of the base and height of the generalised cone.  

Let $u:\Omega \rightarrow \mathbf{R}$ be a $g$-convex function. Assume $x_0 \in \Omega, y_0 \in V$ are given and $u_0:=u(x_0)$. For $h > 0$ small set $z_h = g^*(x_0,y_0,u_0+h)$ and assume
\begin{equation}
  \label{eq:D-def}
  D := \{x \in \Omega;u(x) < g(x,y_0,z_h) \} \subset\subset \Omega.
\end{equation}
We define the $g$-cone with vertex $(x_0,u_0)$ and base $\{(x,g(x,y_0,z_h)); x \in \partial D\}$ by  
\begin{align}
  \label{eq:2vdef}
  \vee(x) &= \sup\{\phi_y(x) := g(x,y,g^*(x_0,y,u_0)) ;  \phi_y(x) \leq g(x,y_0,z_h) \text{ on }\partial D\}. \nonumber
\end{align}
When we need to emphasize dependencies we  include them as a subscript, e.g. $\vee_{D,h}$ if $x_0,y_0,u_0$ are clear from context. The expression for $\vee$ is well defined for any $D \subset U$.  However requiring  $D$ satisfy \eqref{eq:D-def} ensures $\vee = g(\cdot,y_0,z_h)$ on $\partial D$.

Our goal is to estimate $Y \vee_{D,h}(x_0)$ in terms of $D$ and $h$. As in Section \ref{sec:transformations-1} we assume, without loss of generality, that $x_0,y_0,u_0,z_h = 0$. Using Lemma  \ref{lem:transform_facts}(2) it suffices to work in the coordinates given by \eqref{eq:xdef}, \eqref{eq:ydef} and estimate the $\overline{Y}$ mapping of
\[ \overline{\vee}(q) := \frac{g_z(0,0,0)}{g_z(x(q),0,0)}[\vee(x(q)) - g(x(q),0,0)].\]
 By direct calculation we see $\overline{\vee}$ is the $\overline{g}$-cone with base $\partial D_q\times \{0\}$ and vertex $(0,-h)$ (recall $D_q$ is the image of $D$ in the $q$ coordinates). Thus
\begin{equation}
  \label{eq:simple}
   \overline{\vee}(q) = \sup\{\phi_p(q) := \overline{g}(q,p,h); \phi_p \leq 0 \text{ on }D_q\},
\end{equation}
with $D_q$ convex and $\overline{g}$ satisfying Lemma \ref{lem:identities}. To simplify notation we revert to $x,y,g,\vee$.

We compare $\vee$ with a cone of the same base and height. The cone with vertex $(0,-h)$ and base $\partial D \times \{0\}$ is $K:D \rightarrow \mathbf{R}$ defined by
\begin{equation}
  \label{eq:k-def}
   K(x)  = \sup\{l_p(x):= p\cdot x - h ; p \in \mathbf{R}^n \text{ and }l_p \leq 0 \text{ on }\partial D \}.
\end{equation}

\subsection{Upper bounds for $Y \vee (0)$}
\label{sec:upper-bounds-partial}

\begin{lemma}\label{lem:upper_est}
  Suppose $g$ is a generating function satisfying LMP and the identities in Lemma \ref{lem:identities}. Let  $D$ be  a convex domain containing 0. Suppose $\vee,K$ are as defined in \eqref{eq:simple},\eqref{eq:k-def} respectively. There is $d_0,h_0>0$ such that if $\text{diam}(D) \leq d_0$ and $h \leq h_0$ then
  \begin{equation}
    \label{eq:y-upper}
      Y  \vee(0) \subset 2\partial K(0). 
  \end{equation}
Where $d_0,h_0$ depend on $\Vert g \Vert_{C^3}$ and $2\partial K(0) = \partial (2K)(0)$. 
\end{lemma}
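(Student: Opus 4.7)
The plan is to exploit the near-linearity of the transformed generating function from Lemma~\ref{lem:identities}. In the normalised setting, $\overline{g}(\cdot,y,h)$ and the linear function $\ell_y(x)=y\cdot x-h$ defining $K$ share the same vertex value and ``linear part'' at $0$, and the claim becomes a quantitative comparison of vertex subdifferentials of these two cones.

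I would first compute $\overline{g}(0,y,h)=-h$ for every $y$; this follows from the identity $\overline{g}_z(0,p,z)\equiv -1$ observed in the proof of Lemma~\ref{lem:identities} (combining $g^\ast_u=1/g_z$ with the chain rule in the definition of $\tilde g$). In particular $\vee(0)=-h$ automatically matches $\phi_y(0):=\overline{g}(0,y,h)$, so by the sup definition of $\vee$, $y\in Y\vee(0)$ is equivalent to the admissibility condition $\phi_y\le 0$ on $\partial D$. Dually, $2\partial K(0)=\{p:p\cdot x\le 2h\text{ for all }x\in\partial D\}$, so the task reduces to verifying $y\cdot x\le 2h$ on $\partial D$ whenever $\phi_y\le 0$ on $\partial D$.

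Next, I would insert the expansion $\overline{g}(x,y,h)=x\cdot y-h+a^{(1)}_{ijk}(x,y)\,x_ix_jy_k+f^{(1)}(x,y,h)\,h$ from \eqref{eq:x-exp}, together with $|a^{(1)}|\le M$ (bounded in terms of $\|g\|_{C^3}$) and $|f^{(1)}(x,y,h)|\le C|x|$ from \eqref{eq:f-est-2}. Admissibility then rearranges to $y\cdot x\le h+M|x|^2|y|+C|x|h$, and the inclusion will follow once the two error terms are shown to be bounded by $h$. The contribution $C|x|h\le Cd_0h$ is absorbed by taking $d_0\le 1/(2C)$, so everything reduces to controlling $M|x|^2|y|$.

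The remaining difficulty is a bootstrap bound on $|y|$. For this I would apply the admissibility inequality at a point $x^\ast=t(y/|y|)\in\partial D$, which exists by $0\in\text{int}(D)$ and convexity of $D$; this gives $t|y|\le h+Mt^2|y|+Cth$ and hence $|y|\le 3h/t$ once $d_0$ is small. Substituting back yields $M|x|^2|y|\le 3Md_0^2h/t$, which is $\le h/2$ provided $d_0^2$ is sufficiently small. The main obstacle is this last bootstrap: the bound on $|y|$ naturally scales like $h$ (not like a constant), and the factor $2$ in $2\partial K(0)$ is precisely the slack needed to absorb the error $M|x|^2|y|+C|x|h$ into an additional $h$. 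Once all the above estimates are combined, any $x\in\partial D$ satisfies $y\cdot x\le 2h$, giving $Y\vee(0)\subset 2\partial K(0)$ as required.
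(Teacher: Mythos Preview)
Your reformulation is correct: in the normalised setting $y\in Y\vee(0)$ is equivalent to $\overline{g}(\cdot,y,h)\le 0$ on $\partial D$, and the goal reduces to $y\cdot x\le 2h$ for every $x\in\partial D$. The expansion \eqref{eq:x-exp} does give $y\cdot x\le h+M|x|^{2}|y|+C|x|h$ at each admissibility point, as you write.

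The gap is in the bootstrap. Your estimate $|y|\le 3h/t$ is correct, but $t$ is the distance from $0$ to $\partial D$ in the direction $y/|y|$, and there is \emph{no lower bound on $t$ in terms of $d_0=\operatorname{diam}(D)$}: the convex set $D$ can be arbitrarily eccentric (a thin sliver of diameter $d_0$, say). Hence $3Md_0^{2}h/t$ is not made small by shrinking $d_0$ alone, whereas the lemma requires $d_0,h_0$ to depend only on $\|g\|_{C^3}$, not on the shape of $D$ or the position of $0$ inside it. It is also telling that your argument never uses LMP, which is a stated hypothesis.

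The paper's proof resolves exactly this difficulty via LMP. After rotating so $y=|y|e_n$, one writes $x=(x',x_n)$ with $x'\cdot y=0$ and Taylor expands only in the $e_n$ direction from $x'$ to $x$. This yields an error of size $O(|x|)\,x_ny_n=O(|x|)\,x\cdot y$, which is absorbed once $|x|\le d_0$ is small, without any factor of $1/t$. The crucial input is the base value $g(x',y,0)\ge 0$; this is \emph{not} obtained from the expansion (doing so would reintroduce the bad $|x'|^{2}|y|$ term) but from LMP: the section $\{g(\cdot,y,0)<0=g(\cdot,0,0)\}$ is $g$-convex with respect to $(0,0)$, hence convex in these coordinates, and a short sign argument forces points with $x\cdot y=0$ to lie in its complement. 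Replacing the global quadratic error by this geometric bound on the hyperplane $y^{\perp}$ is the missing ingredient in your approach.
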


\begin{proof}
  We prove the transformed generating function satisfies
  \begin{equation}
    \label{eq:almost-affine}
     g(x,y,h) \geq \frac{3}{4} x \cdot y - \frac{3h}{2},
  \end{equation}
  for $\vert x \vert,h$ sufficiently small and $x \cdot y ,h>0$,\footnote{we note if $x \cdot y$ or $h<0$ \eqref{eq:almost-affine} holds with $3/4$ replaced by $5/4$ or $3/2$ replaced by $1/2$. }; \eqref{eq:y-upper} is a straightforward consequence. Indeed, take $y \in Y\vee(0)$ and suppose $y \notin 2\partial K(0)$, that is $x \cdot y > 2h$ for some boundary point $x \in \partial D$. By \eqref{eq:almost-affine} $g(x,y,h) > 0$ and so $g(\cdot,y,h)$ can not be a support of $\vee$.

Take $y \in V$ and rotate so $y = (0,\dots,0,y_n)$. Let $x = (x_1,\dots,x_n) \in D$ satisfy $x_ny_n >0$ and set $x'=(x_1,\dots,x_{n-1},0)$. We assume, for now, $g(x',y,0) \geq 0$ (we'll see this is a consequence of LMP). Now, \eqref{eq:x-exp} implies
  \begin{equation}
   g_{x_n}(x_\tau,y,0)x_n \geq x_ny_n - K\vert x \vert  x_n y_n ,\label{eq:x-est-cone}
 \end{equation}
  for $x_\tau= \tau x + (1-\tau)x'$ and $\tau \in [0,1]$ where $K$ depends on $\Vert g \Vert_{C^3}$. By \eqref{eq:f-est-2} and a choice of $\text{diam}(D)$ small
  \begin{align*}
   g(x,y,h) \geq g(x,y,0) - \frac{3h}{2}.
\end{align*}
A Taylor series for $h(t):= g(tx+(1-t)x',y,0)$, our assumption $g(x',y,0) \geq 0$, and  \eqref{eq:x-est-cone} imply
\begin{align*}
g(x,y,h) &\geq g(x',y,0)+g_{x_n}(x_\tau,y,0)x_n-\frac{3}{2}h \geq x_ny_n(1- K \vert x \vert ) - \frac{3}{2}h.
  \end{align*}
  Choosing $\text{diam}(D)$ small to ensure $K \vert x \vert  \leq 1/4$ we obtain \eqref{eq:almost-affine}.

  To conclude we show $g(x',y,0) \geq 0$. Since $x' \cdot y = 0$ it suffices to show whenever $x \cdot y > 0$ then $g(x,y,0) \ge 0$ and use continuity.  Note if $x \cdot y > 0$ the expression \eqref{eq:x-exp} implies
  \[ g(tx,y,0) > 0 \text{ and } g(-tx,y,0) < 0,\]
  for $t > 0$ sufficiently small. If $g(x,y,0) < 0$ then the $g$-convexity of the section $\{g(\cdot,y,0) < 0 = g(\cdot,0,0)\}$ with respect to $0,0$ (which is just convexity), is violated. So as required $ g(x,y,0) \ge 0$.
\end{proof}

\subsection{Lower bounds for $\partial \vee (0)$}
\label{sec:lower-bounds-partial}

The estimates in the other direction are formulated differently. As motivation consider the rectangle
  \begin{equation}
   R = \{ x \in \mathbf{R}^n; -b_i \leq x_i \leq a_i\},\label{eq:R}
 \end{equation}
  for $a_i,b_i>0$,  and the cone $K$ with base $R$ and vertex $(0,-h)$. Then $\partial K(0)$ contains the points $he_i/a_i,-he_i/b_i$. Thus with
   \begin{equation}
   R^* := \{x \in \mathbf{R}^n; -b_i^{-1} \leq x_i \leq a_i^{-1}\}.\label{eq:Rstar}
 \end{equation}
we have
 \begin{align}
  \label{eq:k-est-1} \partial K(0) &\supset C_n h R^*\\
  \label{eq:k-est-2}    \vert \partial K(0) \vert  &\geq C_n h^n  \prod_{i=1}^n\left(\frac{1}{b_i}+\frac{1}{a_i}\right). 
 \end{align}
 Next we decrease the base of the cone: consider a domain $D$ with $0 \in D \subset R$ and the cone $K_D$ with base $D$ and vertex $(0,-h)$.
 Because $\partial K(0) \subset \partial K_D(0)$, \eqref{eq:k-est-1} and \eqref{eq:k-est-2} hold with $K$ replaced by $K_D$. This motivates the following result. 

\begin{lemma}\label{lem:lower_est}
  Suppose $g$ is a generating function satisfying LMP and the identities in Lemma \ref{lem:identities}. Let $D$ be a convex domain with $0 \in D \subset R$. Let $\vee$ be given by \eqref{eq:simple}. There is $d_0,h_0 > 0$  such that if $\text{diam}(D) \leq d_0$ and $h \leq h_0$ then \eqref{eq:k-est-1} and \eqref{eq:k-est-2} hold with $\vee$ in place of $K$. The quantities $d_0,h_0$ depend on $\text{diam}(V), \Vert g \Vert_{C^3}$, 
\end{lemma}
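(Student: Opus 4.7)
The plan closely parallels the proof of Lemma \ref{lem:upper_est}: reduce to the transformed setting of Section \ref{sec:transformations-1}, then use the near-affineness of $\overline{g}$ from Lemma \ref{lem:identities} to compare $Y\overline{\vee}(0)$ with the classical subdifferential of the cone with base $R$.

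In the $(q,p,z)$ coordinates coming from Lemma \ref{lem:transform_facts}, the cone is $\overline{\vee}(q) = \sup\{\overline{g}(q,p,h) : \overline{g}(\cdot,p,h)\leq 0 \text{ on }D_q\}$. Using \eqref{eq:f-est-2} one sees $\overline{g}(0,p,h) = -h$ for every $p$ (since $|f^{(1)}(0,p,h)| \leq C\cdot|0|=0$). Consequently $Y\overline{\vee}(0)$ is exactly the set of $p$ for which $\overline{g}(\cdot,p,h) \leq 0$ on $\partial D_q$, and since $D_q \subset R$ it suffices to verify this condition on all of $R$.

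Applying the expansion \eqref{eq:x-exp}, $\overline{g}(q,p,h) = q \cdot p - h + a^{(1)}_{ijk}(q,p)q_iq_jp_k + f^{(1)}(q,p,h)h$. For $p \in c_n h R^*$ with $c_n = 1/(4n)$, the definition of $R^*$ gives $q_ip_i \leq c_n h$ for each $i$ whenever $q \in R$, so $q \cdot p \leq n c_n h = h/4$ and $q \cdot p - h \leq -3h/4$ throughout $R$. The term $|f^{(1)}h| \leq C d_0 h$ is small by \eqref{eq:f-est-2} once $d_0$ is small, and the remaining error $|a^{(1)}qqp|$ is handled as in the proof of Lemma \ref{lem:upper_est} by taking $d_0, h_0$ small depending on $\|g\|_{C^3}$ and $\text{diam}(V)$. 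The net result $\overline{g}(\cdot,p,h) \leq -h/4 < 0$ on $R$ establishes the analogue of \eqref{eq:k-est-1}, namely $Y\overline{\vee}(0) \supset c_n h R^*$. The volume estimate \eqref{eq:k-est-2} then follows from $|c_n h R^*| = c_n^n h^n \prod_i(1/a_i+1/b_i)$. Finally I would transform back to the original $(x,y)$ coordinates via Lemma \ref{lem:transform_facts} and \eqref{eq:xxbarest}, absorbing the Jacobian factors of $C_g$ into the dimensional constant.

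The main obstacle is controlling the cubic error $|a^{(1)}_{ijk}(q,p)q_iq_jp_k|$ uniformly in the eccentricity of $R$. The naive bound $C|q|^2|p|$ is inadequate for very thin $R$ since $|p|$ can be as large as $c_n h/\min_i\min(a_i,b_i)$. The right way is to extract one factor $q_ip_k$ that is controlled like the linear-part term (uniformly $\lesssim h$) and exploit the smallness of the remaining $q_j$, which is bounded by $\text{diam}(D)\leq d_0$; combined with the Lipschitz bound on $a^{(1)}$ this gives an error of order $d_0\cdot h$ that can be absorbed in the $h/4$ budget once $d_0, h_0$ are chosen small enough depending on $\|g\|_{C^3}$ and $\text{diam}(V)$. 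This careful bookkeeping is where the analogue of the estimate \eqref{eq:x-est-cone} from the proof of Lemma \ref{lem:upper_est} is needed.
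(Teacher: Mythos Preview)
Your approach is different from the paper's and the last paragraph contains a genuine gap.  You propose to show directly that every $p\in c_n h R^*$ satisfies $\overline g(\cdot,p,h)\le 0$ on $D$ by absorbing the cubic error $a^{(1)}_{ijk}q_iq_jp_k$.  The fix you suggest --- ``extract one factor $q_ip_k$ that is controlled like the linear part'' --- does not work, because only the \emph{diagonal} products $q_ip_i$ are bounded by $c_nh$ when $q\in R$ and $p\in c_n h R^*$; off-diagonal products $q_ip_k$ with $i\neq k$ are not.  For a concrete failure take $n=2$, $R=[-d_0/2,d_0/2]\times[-\epsilon,\epsilon]$ with $\epsilon\ll d_0$, $q=(d_0/2,0)\in D\subset R$ and $p=(0,c_nh/\epsilon)\in c_n h R^*$.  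Then $a^{(1)}_{112}q_1q_1p_2$ is of order $d_0^2 h/\epsilon$, which cannot be absorbed into the $h/4$ budget however small $d_0,h_0$ are chosen, since the lemma imposes no lower bound on the side lengths of $R$ in terms of $d_0$.  The same obstruction defeats the alternative expansion \eqref{eq:y-exp}.

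The paper circumvents this by not attempting to place a whole box inside $\partial\vee(0)$.  Instead it argues one coordinate direction at a time: for $e_n$ it takes $\hat x\in\partial D$ maximising $x_n$, picks a support $g(\cdot,\hat y,h)$ of $\vee$ touching at both $0$ and $\hat x$, and shows that $p:=g_x(0,\hat y,h)$ lies in a narrow cone about $e_n$ with $|p|\ge h/(2a_n)$.  The estimates \eqref{eq:est}--\eqref{eq:subdiff_est} are Taylor expansions \emph{along the $e_n$ axis only}, where $|x|\le d^*\le a_n$, so the cubic error is $O(a_n|\hat y|)$ and can be fed back into a self-referential inequality for $|g_x(0,\hat y,h)|$.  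Repeating for the $2n$ directions $\pm e_i$ produces points in $\partial\vee(0)$ close to each of $he_i/a_i$ and $-he_i/b_i$, and convexity of $\partial\vee(0)$ then yields $ChR^*\subset\partial\vee(0)$.  The eccentricity of $R$ never appears because each step involves only one axis.
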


\begin{proof}[Proof. (Lemma \ref{lem:lower_est})]
 Rather than showing $\partial \vee(0)$ contains the points $Che_i/a_i$ and $-Che_i/b_i$ for $i=1,\dots,n$ we will, instead,  show $\partial {\vee}(0)$ contains points close to these points. That is, we show for for some $\kappa \geq 1/4$ and $q := \kappa h e_n/a_n$ there is $p \in \partial \vee(0)$  satisfying
  \begin{equation}
    \label{eq:pqrel}
       \vert p-q \vert  \leq \frac{1}{16} \vert q \vert .
  \end{equation}
 Our proof also applies to $\kappa h e_i/a_i$ and  $-\kappa h e_i/b_i$ for $i=1,\dots,n$, so $ChR^* \subset \partial  \vee(0)$.

  To begin, choose $\hat{x}$ realizing $\hat{x}_n = \sup\{x_n ; x = (x_1,\dots,x_n) \in D\}$. We see, by taking a limit of the $\phi_y$ used in \eqref{eq:simple} to define $\vee$, that there is $\hat{y}$ for which $g(\cdot,\hat{y},h)$ supports $\vee$ at $\hat{x}$ and $0$. In particular, since $\vee = 0$ at $\hat{x}$ and is less than or equal to $0$ on $\partial D$ we have, for  $\hat{y}$ appropriately chosen and some $\beta \geq 0$,
  \begin{equation}
    \label{eq:onxn}
       g_x(\hat{x},\hat{y},h) = \beta e_n. 
  \end{equation}
We'll prove that $p = g_x(0,\hat{y},h)$ and $q = (p\cdot e_n)e_n$ satisfy \eqref{eq:pqrel}. 

Choose $d^*$ so that $g(d^*e_n,\hat{y},h) = 0$. We claim $d^*\leq a_n$. Indeed
\[S:= \{x; g(x,\hat{y},h) < 0 = g(x,0,0)\}  \] is $g$-convex with respect to $(0,0)$, that is, convex. Furthermore since $g_x(\hat{x},\hat{y},h) = \beta e_n$ and $g(\hat{x},\hat{y},h) = 0$, the plane  $P:=\{x; x_n = \hat{x}_n\}$ supports $S$. Thus, since $S$ contains $0$ and lies on one side of $P$, $S \subset \{x;x_n \leq \hat{x}_n\}$ and $d^* \leq a_n$.

  Now \eqref{eq:x-exp} with \eqref{eq:f-exp} implies
  \begin{equation}
    \label{eq:est}
         \vert g_x(x,\hat{y},h) - g_x(0,\hat{y},h) \vert  \leq C( \vert x \vert +h) \vert  \vert \hat{y} \vert  + Kh(h+ \vert x \vert ),
      \end{equation}
      where $C,K$ depend on $\Vert g \Vert_{C^3},$ $\text{diam}(V)$ and we now assume $\text{diam}(D) \leq1$. By a Taylor series we obtain for some $\tau \in (0,1)$
  \begin{align}
  \nonumber  h &= g(d^*e_n,\hat{y},h) - g(0,\hat{y},h)\\
  \nonumber    &= d^*g_{x_n}(\tau d^*e_n,\hat{y},h)\\
   \label{eq:d-ineq} &\leq d^*  \vert g_x(0,\hat{y},h) \vert +Cd^*(d^*+h) \vert \hat{y} \vert  + Kh(h+d^*).
  \end{align}
  To estimate $ \vert \hat{y} \vert $ in terms of $ \vert g_x(0,\hat{y},h) \vert $ write
  \begin{equation}
    \label{eq:haty_est}
        \vert \hat{y} \vert  =  \vert g_x(0,\hat{y},0) \vert  \leq  \vert g_x(0,\hat{y},h) \vert  +  \vert g_{xz}(0,\hat{y},\tau h) \vert h. 
  \end{equation}
  Combining \eqref{eq:d-ineq} and \eqref{eq:haty_est} we have
  \[ h \leq d^* \vert g_x(0,\hat{y},h) \vert [1+Cd^*(d^*+h)] + Kh(h+d^*).\]
  We choose $\text{diam}(D)$ and $h$ small to ensure both $(1+Cd^*(d^*+h)) \leq 3/2$ and $K(h+d^*) \leq 1/4$. Combining with $d^* \leq a_n$ yields
  \begin{equation}
    \label{eq:subdiff_est}
   \frac{h}{2a_n} \leq    \vert g_x(0,\hat{y},h) \vert . 
  \end{equation}

  Using, once again, \eqref{eq:est} (this time with $x = \hat{x}$) and \eqref{eq:haty_est} we have
  \[   \vert g_x(\hat{x},\hat{y},h) - g_x(0,\hat{y},h) \vert  \leq C( \vert \hat{x} \vert +h) \vert  \vert g_x(0,\hat{y},h) \vert +Kh( \vert \hat{x} \vert +h) . \]
  Dividing through by $ \vert g_x(0,\hat{y},h) \vert $, using \eqref{eq:subdiff_est} and choosing $h, \vert \hat{x} \vert $ sufficiently small we can ensure 
  \[ \left \vert \frac{g_x(\hat{x},\hat{y},h)}{ \vert g_x(0,\hat{y},h) \vert }- \frac{g_x(0,\hat{y},h)}{ \vert g_x(0,\hat{y},h) \vert }\right \vert  \leq 1/16.\]
  The first vector lies on the $e_n$ axis (recall \eqref{eq:onxn}). Thus the unit vector $\frac{g_x(0,\hat{y},h)}{ \vert g_x(0,\hat{y},h) \vert }$, and consequently $g_x(0,\hat{y},h)$ make angle $\theta$ with the $e_n$ axis for $\theta$ satisfying $\sin(\theta) \leq 1/16$, i.e. $\theta \leq 1/8$. This, with \eqref{eq:subdiff_est} implies both
  \[ e_n \cdot g_x(0,\hat{y},h) = \cos (\theta) \vert g_x(0,\hat{y},h) \vert  \geq \frac{h}{4a_n},\]
  and
  \[  \vert g_x(0,\hat{y},h)-(e_n \cdot g_x(0,\hat{y},h))e_n  \vert  \leq \sin (\theta)  \vert g_x(0,\hat{y},h) \vert  \leq \frac{1}{16} \vert (e_n \cdot g_x(0,\hat{y},h))e_n \vert ,\]
  which is \eqref{eq:pqrel}. \end{proof}

We have a more precise estimate when $x_0$ is close to the boundary. We make use of the minimum ellipsoid (see \cite[Section 2.1]{LiuWang15}) and the following (specific case of a) lemma due to Figalli, Kim, and McCann \cite{FKM13}. 
\begin{lemma}\cite[Lemma 6.9]{FKM13}\label{lem:fkm}
  Let $D \subset \mathbf{R}^n$ be a convex domain. Assume $D$ contains a ``vertical'' line segment $\{(x',t_0+t); x' \in \mathbf{R}^{n-1} , t \in [0,d]\}$ of length $d$. Let
  \[D': = \{(x_1,\dots,x_{n-1},0); x = (x_1,\dots,x_n) \in D\}\] be the projection of $D$ onto $\mathbf{R}^{n-1}$. There is $C>0$ depending only on $n$ such that
  \[  \vert D \vert \geq  Cd \mathcal{H}^{n-1}(D'),\]
  where $\mathcal{H}^{n-1}$ is the $n-1$ dimensional Hausdorff measure. 
\end{lemma}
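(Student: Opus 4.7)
The plan is to exploit convexity to reduce the volume estimate to a one-dimensional fiber bound plus Fubini. Fix $x' \in \mathbf{R}^{n-1}$ as in the hypothesis and let $L = \{x'\} \times [t_0, t_0+d]$ denote the vertical segment contained in $D$. For each $y' \in D'$, by definition of the projection we may select some $s = s(y')$ with $(y', s) \in D$; by convexity the whole convex hull of $L \cup \{(y', s)\}$ is contained in $D$.

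The key geometric step is to extract, for every $\lambda \in (0,1)$, a uniform fiber-length bound at the contracted horizontal point $z'(\lambda, y') := (1-\lambda)y' + \lambda x'$. Indeed, as $t$ ranges over $[0,d]$ the convex combinations $(1-\lambda)(y', s) + \lambda(x', t_0 + t)$ all lie in $D$, all share the first coordinate $z'(\lambda, y')$, and have second coordinates filling an interval of length exactly $\lambda d$. Writing $f(z')$ for the length of the (necessarily interval) fiber $\{t : (z', t) \in D\}$, this gives $f(z'(\lambda, y')) \geq \lambda d$ for every $y' \in D'$.

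To conclude I would integrate. The map $y' \mapsto z'(\lambda, y')$ is an affine contraction of $\mathbf{R}^{n-1}$ with constant Jacobian $(1-\lambda)^{n-1}$, so its image $D'_\lambda$ is a convex subset of $\mathbf{R}^{n-1}$ satisfying $\mathcal{H}^{n-1}(D'_\lambda) = (1-\lambda)^{n-1}\mathcal{H}^{n-1}(D')$. Fubini applied to the inclusion $\{(z', t) : z' \in D'_\lambda,\ f(z') \geq \lambda d\} \subset D$ (the vertical fibers over distinct $z'$ being automatically disjoint) yields
\[ \vert D\vert \geq \lambda d\, (1-\lambda)^{n-1}\, \mathcal{H}^{n-1}(D'). \]
Optimizing the prefactor in $\lambda \in (0,1)$ --- calculus gives the extremal choice $\lambda = 1/n$ --- produces the dimensional constant $C_n = \tfrac{1}{n}(1-\tfrac{1}{n})^{n-1}$ and completes the proof. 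There is no real obstacle: the argument is a one-line convexity observation followed by the change-of-variables computation for the affine contraction, and the only point requiring attention is to keep the lift $y' \mapsto s(y')$ implicit (no measurability is needed since we only use the pointwise fiber-length bound under Fubini).
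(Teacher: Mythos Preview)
Your argument is correct. The paper does not actually prove this lemma: it is quoted verbatim from \cite[Lemma~6.9]{FKM13} and used as a black box, so there is no in-paper proof to compare against. Your convex-hull-plus-Fubini argument is the standard one and gives the explicit constant $C_n = n^{-1}(1-n^{-1})^{n-1}$.

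One small notational slip: the set you write as $\{(z',t): z'\in D'_\lambda,\ f(z')\geq \lambda d\}$ does not constrain $t$, so as written it is not a subset of $D$. What you mean is $\bigcup_{z'\in D'_\lambda}\{z'\}\times I(z')$ where $I(z')$ is the vertical fiber of $D$ over $z'$; Fubini on $D$ then gives $|D|\geq \int_{D'_\lambda} f(z')\,dz' \geq \lambda d\,(1-\lambda)^{n-1}\mathcal{H}^{n-1}(D')$ directly. This is cosmetic and does not affect the validity of the proof.
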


Lemma in hand, we prove the estimate close to the boundary using a similar proof to that of Figalli, Kim, and McCann. 
\begin{lemma}\label{lem:lower_est_close}   Suppose $g$ is a generating function satisfying LMP and the identities in Lemma \ref{lem:identities}. Let $D$ be a convex domain with $0 \in D$. Let $\vee$ be given by \eqref{eq:simple}.  Assume $0$ is close to the boundary, in the sense that there is a unit vector $\nu$ and positive $d$ such that
  \begin{equation}
   \sup_{x \in D}\langle x,\nu \rangle = \epsilon d, \label{eq:close_cond}
 \end{equation}
 and in addition $D$ contains a line segment of length $d$ parallel to $\nu$. There is $C,d_0,h_0 > 0$ depending on $ \Vert g \Vert_{C^3}$, such that if $\text{diam}(D) \leq d_0$ and $h \leq h_0$ then
  \[ h^n \leq  C\epsilon  \vert \partial \vee(0) \vert  \vert D \vert . \]
\end{lemma}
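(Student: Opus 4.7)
The plan is to enclose $D$ in an axis-aligned bounding rectangle that is ``thin'' (of width $\epsilon d$) in the $\nu$ direction and has $(n{-}1)$-measure comparable to the projection $D'$, and then combine the quantitative lower bound of Lemma \ref{lem:lower_est} with Lemma \ref{lem:fkm}.

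After rotating coordinates so $\nu = e_n$, the hypothesis \eqref{eq:close_cond} reads $\sup_{D} x_n = \epsilon d$, and the prescribed line segment of length $d$ parallel to $e_n$ forces $\inf_{D} x_n \leq -(1-\epsilon)d$. Let $D' \subset \mathbf{R}^{n-1}$ denote the projection of $D$ onto the first $n{-}1$ coordinates. I would further rotate those coordinates so that they align with the axes of the John ellipsoid of $D'$; in these coordinates the bounding rectangle $R' = \prod_{i<n}[-b_i, a_i]$ of $D'$ satisfies $|R'| \leq C_n \mathcal{H}^{n-1}(D')$. Setting $a_n = \epsilon d$, $b_n = -\inf_D x_n$, and $R = R' \times [-b_n, a_n]$ gives $D \subset R$ with $0 \in R$.

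Next I would apply Lemma \ref{lem:lower_est} to obtain $|\partial \vee(0)| \geq C_n h^n \prod_i (1/a_i + 1/b_i)$. Isolating the $i=n$ factor, which satisfies $1/a_n + 1/b_n \geq 1/(\epsilon d)$, and estimating each $i<n$ factor via the arithmetic--geometric mean inequality $1/a_i + 1/b_i = (a_i+b_i)/(a_ib_i) \geq 4/(a_i+b_i)$, I would deduce
\[ |\partial \vee(0)| \geq \frac{C h^n}{\epsilon\, d\, |R'|}. \]
From Lemma \ref{lem:fkm} applied to the line segment of length $d$ parallel to $e_n$, together with the John-type bound $\mathcal{H}^{n-1}(D') \geq c_n |R'|$, I would also obtain $|D| \geq C d\, |R'|$. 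Multiplying the two inequalities causes $d|R'|$ to cancel and yields the desired $h^n \leq C \epsilon\, |\partial \vee(0)|\,|D|$.

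The main obstacle I anticipate is not the algebra but the bookkeeping associated with the two orthogonal rotations used to normalize coordinates: one must confirm that the hypotheses of Lemma \ref{lem:lower_est} survive rotation. This should be routine, since the identities of Lemma \ref{lem:identities} and the Loeper maximum principle are preserved under simultaneous orthogonal rotations of $x$ and $y$, and the constants depend only on $\|g\|_{C^3}$, which is invariant under such rotations.
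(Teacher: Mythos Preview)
Your proposal is correct and follows essentially the same route as the paper's proof: rotate so $\nu=e_n$, align the remaining coordinates with the John/minimum ellipsoid of the projection $D'$, enclose $D$ in an axis-aligned rectangle thin in the $e_n$ direction, invoke Lemma~\ref{lem:lower_est} (specifically \eqref{eq:k-est-2}) for the lower bound on $|\partial\vee(0)|$, and combine with Lemma~\ref{lem:fkm} so that the factor $d\cdot(\text{width of }R')$ cancels. The only cosmetic difference is that the paper records the ellipsoid widths directly as $b_1,\dots,b_{n-1}$ and bounds $D'\subset\prod[-b_i,b_i]$, whereas you track the tight bounding box $[-b_i,a_i]$ and insert an AM--GM step; both lead to the same inequality, and your remark about rotations preserving the hypotheses of Lemma~\ref{lem:lower_est} is handled identically (and just as implicitly) in the paper.
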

\begin{proof}
  We assume, without loss of generality that $\nu = e_n$. Let $D'$ be as in Lemma \ref{lem:fkm}. Then up to a choice of the remaining coordinates we assume the minimum ellipsoid of $D'$ (as a subset of $\mathbf{R}^{n-1}$) is
  \[ E':= \{x' = (x_1,\dots,x_{n-1},0); \sum_{i=1}^{n-1}\left(\frac{x_i-\overline{x}_i}{b_i/2}\right)^2 \leq 1\},\] for some $\overline{x} \in D'$. Then $\mathcal{H}^{n-1}(D') \geq C_n b_1 \dots b_{n-1}$ and
  \[ D \subset [-b_1,b_1]\times \dots \times [-b_{n-1},b_{n-1}]\times [-K,\epsilon d],\]
  for some $K>0$. Then Lemma \ref{lem:lower_est}, specifically \eqref{eq:k-est-2}, implies
  \begin{equation}
    \label{eq:cb}
       \vert \partial \vee(0) \vert  \geq C_n h^n \frac{1}{\epsilon d b_1 \dots b_{n-1}}. 
  \end{equation}
  On the other hand Lemma \ref{lem:fkm} implies
  $ \vert D \vert  \geq C_n d  \vert D' \vert  \geq C_n d b_1 \dots b_{n-1} $, which combined with \eqref{eq:cb} completes the proof. 
\end{proof}

\section{Uniform estimates}
\label{sec:uniform-estimates}

In this section we consider $g$-convex Aleksandrov solutions of
\begin{align}
\label{eq:caff-style} \lambda&\leq \det DYu \leq \Lambda   \text{ in }D,\\
\label{eq:dir}  u &= g(\cdot,y_0,z_0) \text{ on }\partial D.
\end{align}

Here $\lambda,\Lambda$ are positive constants and $D$ (being a section) is necessarily $g$-convex with respect to $y_0,z_0$. Our goal is to estimate $ \vert u-g(\cdot,y_0,z_0) \vert $ in D. By using the $g$-cone estimates our proofs are direct extensions of those in the Monge--Amp\`ere case. In this section we assume $\text{diam}(D)$ and $h := \sup  \vert u-g(\cdot,y_0,z_0) \vert $ are sufficiently small as required by Lemmas \ref{lem:upper_est}, \ref{lem:lower_est} and \ref{lem:lower_est_close}.

\begin{theorem}\label{thm:upper_uniform}
Assume $g$ is a generating function satisfying LMP.  Assume $u$ is a $g$-convex solution of \eqref{eq:caff-style}, \eqref{eq:dir}. There is $C>0$ depending only on $\Lambda,n,C_g$ such that
  \begin{equation}
    \label{eq:upper_uniform}
       \sup_{D} \vert u(\cdot)-g(\cdot,y_0,z_0) \vert ^n \leq C \vert D \vert ^2.
     \end{equation}
\end{theorem}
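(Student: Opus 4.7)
The plan is to adapt the classical Aleksandrov maximum principle from the Monge--Amp\`ere case to the $g$-convex setting using the machinery of Sections \ref{sec:transformations-1} and \ref{sec:g-cones-subd}. Let $x_0 \in D$ attain the supremum $h = \sup_D(g(\cdot,y_0,z_0) - u)$. After applying the transformation of Section \ref{sec:transformations-1} to normalise $x_0, y_0, u(x_0) \mapsto 0$, Lemma \ref{lem:transform_facts} yields an equivalent problem for $\overline u$ on the transformed domain $\overline D := D_q$, which is \emph{classically} convex because $D$ is a section of $u$ and hence $g$-convex with respect to $(y_0,z_0)$ by Lemma \ref{lem:loeper-consequences}(2). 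The normalised height $\overline h$ and volume $|\overline D|$ are comparable to their originals via \eqref{eq:u-cg-dep}, \eqref{eq:xxbarest}, and the PDE inequality \eqref{eq:caff-style} transforms to one with a comparable constant $\overline \Lambda$.

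Form the $g$-cone $\vee$ of \eqref{eq:simple} with vertex $(0,-\overline h)$ and base $\partial \overline D \times \{0\}$. The first essential step is the Aleksandrov-type inclusion $\overline Y \vee(0) \subset \overline Y \overline u(\overline D)$, established by a sliding argument: given $p \in \overline Y \vee(0)$, the family $\phi^t(x) := \overline g(x,p,\overline h + t)$ is strictly decreasing in $t$ (since $\overline g_z < 0$), with $\phi^0 \leq 0$ on $\partial \overline D$ (from the definition of $\vee$) and $\phi^0(0) = -\overline h = \overline u(0)$. Letting $t^\ast \geq 0$ be the smallest $t$ with $\phi^t \leq \overline u$ throughout $\overline D$, strict monotonicity prevents contact on $\partial \overline D$ when $t^\ast > 0$, and the gap $\phi^{t^\ast}(0) < \overline u(0)$ prevents contact at $0$, so $\phi^{t^\ast}$ touches $\overline u$ at an interior point $x_1$ and thus $p \in \overline Y \overline u(x_1) \subset \overline Y \overline u(\overline D)$.

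For the quantitative lower bound, apply John's lemma to $\overline D$ to find an ellipsoid $E$ centred at some $x_c$ with $E \subset \overline D \subset nE$. Taking $q$-coordinates along the principal axes of $E$ and setting $R = \prod_i[-b_i,a_i]$ to be the axis-aligned bounding box of $nE$, one has $a_i + b_i = 2nr_i$ and $a_ib_i \leq n^2 r_i^2$. Lemma \ref{lem:lower_est} then gives
\[ |\partial \vee(0)| \;\geq\; C \overline h^n \prod_i \Bigl(\tfrac{1}{a_i}+\tfrac{1}{b_i}\Bigr) \;\geq\; \frac{C_n \overline h^n}{\prod_i r_i} \;\geq\; \frac{C_n' \overline h^n}{|\overline D|}, \]
using $|\overline D| \leq |nE| \leq C_n \prod_i r_i$. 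Lemma \ref{lem:loeper-consequences}(3) combined with A2 implies $|\overline Y \vee(0)|$ and $|\partial \vee(0)|$ are comparable up to a $C_g$-factor. Combining with the PDE bound $|\overline Y \overline u(\overline D)| \leq \overline \Lambda |\overline D|$ and undoing the transformation yields $h^n \leq C(\Lambda, n, C_g)|D|^2$.

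The main obstacle is verifying the comparison $\overline Y \vee(0) \subset \overline Y \overline u(\overline D)$ carefully: the sliding is parameterised by the $z$-argument of $\overline g$ rather than a vertical translation, and the possibility of boundary contact must be excluded using $\overline g_z<0$ together with $\phi^0 \leq 0$ on $\partial \overline D$. The remaining ingredients--the John's ellipsoid manipulation and converting $\partial \vee$ to $\overline Y \vee$--are routine given the framework of Sections \ref{sec:transformations-1} and \ref{sec:g-cones-subd}.
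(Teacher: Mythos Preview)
Your proof is correct and follows essentially the same route as the paper's: transform so that $D$ becomes convex and $g$ satisfies the identities of Lemma~\ref{lem:identities}, form the $g$-cone at the chosen interior point, invoke Lemma~\ref{lem:lower_est} together with John's ellipsoid to bound $|\partial\vee(0)|$ from below by $C\overline h^{\,n}/|\overline D|$, convert to $|Y\vee(0)|$ via the $C_g$-comparability, and finish with the comparison inclusion $Y\vee(0)\subset Yu(D)$ and the upper measure bound $|Yu(D)|\leq\Lambda|D|$. The only cosmetic differences are that the paper cites the comparison inclusion from \cite[Lemma~4.4]{Trudinger14} rather than rederiving it via your sliding argument, and takes $x_0$ to be an arbitrary point of $D$ rather than the infimum point.
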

\begin{proof} Fix any $x_0 \in D$, after translating assumed to be 0. It suffices to obtain \eqref{eq:upper_uniform} after applying the transformations in Section \ref{sec:transformations-1}. Estimate \eqref{eq:upper_uniform} will  hold for the original function and coordinates with appropriate inclusion of the constant $C_g$. Thus we assume $D$ is convex, $g$ satisfies the identities in Lemma \ref{lem:identities} and $u \equiv 0$ on $\partial D$.  Let the minimum ellipsoid of $D$ be given by
  \[  E := \big\{x; \sum\frac{(x_i-\overline{x}_{i})}{a_i^2} \leq 1\big\}, \]
  for some $\overline{x} \in D$. Note  $D \subset R := \{-2a_i\leq x_i \leq 2a_i\}$. Set $R^* = \{-(2a_i)^{-1}\leq x_i \leq (2a_i)^{-1}\}$ and note $\vert R\vert ^* \geq C_n\vert D\vert ^{-1}$. Thus by Lemma \ref{lem:lower_est} the $g$-cone $\vee$ with vertex $(0,u(0))$ and base $\partial D \times \{0\}$ satisfies $C \vert D \vert \vert \partial \vee(0) \vert  \geq  \vert u(0) \vert ^n$. Furthermore $Y\vee(0) = Y(0,\vee(0),\partial \vee(0))$ so $\vert Y\vee(0) \vert  \geq C \vert \partial \vee(0) \vert $ for $C$ depending on $C_g$. Thus 
  \begin{equation}
    \label{eq:est-alm}
        \vert D \vert \vert Y\vee(0)  \vert  \geq C \vert u(0) \vert ^n  .
  \end{equation}

  The comparison principle (\cite[Lemma 4.4]{Trudinger14}) implies $Y\vee(0)\subset Yu(D)$ so \eqref{eq:est-alm} along with $\vert Yu(D)\vert \leq \Lambda \vert D \vert $ implies the result. 
\end{proof}

If instead we use Lemma \ref{lem:lower_est_close} we obtain the following:

\begin{theorem}\label{thm:upper_uniform_close} Assume $g$ satisfies LMP and the identities in Lemma \ref{lem:identities} and that $D$ is a convex domain. Let $u$ satisfy \eqref{eq:caff-style} and \eqref{eq:dir}. Suppose further that $x_0 \in D$ is a point close to the boundary in the sense that there is a unit vector $\nu$ and $\epsilon,d>0$ with
  \[ \sup_{x \in D}\langle x-x_0,\nu \rangle = \epsilon d \]
  and $D$ contains a line segment of length $d$ parallel to $\nu$. There is $C>0$ independent of $u$ such that
  
  \[  \vert u(x_0)-g(x_0,y_0,z_0) \vert ^n \leq C\epsilon \vert D \vert ^2.\]
\end{theorem}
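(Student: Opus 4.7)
The plan is to follow the template of Theorem \ref{thm:upper_uniform} step by step, but replace the appeal to Lemma \ref{lem:lower_est} with the sharper near-boundary estimate Lemma \ref{lem:lower_est_close}.

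First I would translate so that $x_0 = 0$ and apply the transformations from Section \ref{sec:transformations-1}. After this reduction, $g$ satisfies the identities of Lemma \ref{lem:identities}, the set $D$ is convex (this is the content of the $g$-convexity of sections), and the boundary values become $u \equiv 0$ on $\partial D$. By \eqref{eq:u-cg-dep} and \eqref{eq:xxbarest}, the quantities $|u(x_0) - g(x_0, y_0, z_0)|$ and $|D|$ only change by factors depending on $C_g$, so the inequality I want to establish is unaffected up to absorbing such factors into $C$. The hypothesis that $0$ is close to the boundary needs to be transferred to the new coordinates: since the diffeomorphism $x \mapsto q(x)$ has Jacobian bounded between $C_g^{-3}$ and $C_g^3$ and is $C^1$, the transformed domain satisfies a hypothesis of the same form (with $\epsilon$ and $d$ replaced by comparable quantities, and $\nu$ by a suitable unit vector), and the transformed image of the vertical segment can be replaced, via convexity of $D_q$, by the chord through its endpoints, which is a genuine line segment of comparable length.

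Next I would set $h := |u(0)|$ and introduce the $g$-cone $\vee$ with vertex $(0, u(0))$ and base $\partial D \times \{0\}$, exactly as in the proof of Theorem \ref{thm:upper_uniform}. Because the hypotheses of Lemma \ref{lem:lower_est_close} are exactly those just verified, the lemma yields
\[ h^n \leq C \epsilon\, |\partial \vee(0)|\, |D|. \]

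Now I would transfer this to $Yu(D)$. By Lemma \ref{lem:loeper-consequences}(3), $Y\vee(0) = Y(0, \vee(0), \partial \vee(0))$, and since $Y$ is a $C^1$ diffeomorphism in its third argument on the compact set $\overline{\Gamma}$ with Jacobian controlled by $C_g$, we obtain $|Y \vee(0)| \geq c\, |\partial \vee(0)|$ for a constant $c$ depending only on $C_g$. The comparison principle (\cite[Lemma 4.4]{Trudinger14}), used as in the proof of Theorem \ref{thm:upper_uniform}, gives $Y\vee(0) \subset Yu(D)$, and the upper bound in \eqref{eq:caff-style} yields $|Yu(D)| \leq \Lambda |D|$. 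Combining these,
\[ h^n \leq C \epsilon\, |\partial \vee(0)|\, |D| \leq C' \epsilon\, |Y\vee(0)|\, |D| \leq C'' \epsilon\, |D|^2, \]
which is the desired estimate.

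The main obstacle is purely bookkeeping rather than conceptual: one must check that the hypothesis \eqref{eq:close_cond} and the existence of a $\nu$-parallel segment of length $d$ survive the nonlinear change of coordinates $x \mapsto q(x)$ with only a loss depending on $C_g$. The remainder of the argument is an almost verbatim copy of Theorem \ref{thm:upper_uniform}, with the sharpened cone estimate providing the factor $\epsilon$ that makes the conclusion stronger when $x_0$ is near $\partial D$.
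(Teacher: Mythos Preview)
Your proposal is correct and follows exactly the approach indicated in the paper, which simply says ``If instead we use Lemma~\ref{lem:lower_est_close} we obtain the following'' --- i.e., rerun the proof of Theorem~\ref{thm:upper_uniform} with the sharper near-boundary cone estimate in place of Lemma~\ref{lem:lower_est}. Your discussion of the coordinate-change bookkeeping (transferring the condition~\eqref{eq:close_cond} and the parallel segment through the diffeomorphism $x\mapsto q(x)$) is more careful than the paper, which leaves this implicit; note in particular that since the statement already assumes $g$ satisfies the identities of Lemma~\ref{lem:identities} and $D$ is convex, the re-centering at $x_0$ is close to a translation, so this step is indeed routine.
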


\begin{remark}\label{rem:ineq}
  For Theorems \ref{thm:upper_uniform} and \ref{thm:upper_uniform_close} it suffices that $u \geq g(\cdot,y_0,z_0)$ on $\partial D$. In this case we apply the above proofs to $D' = \{ u \leq g(\cdot,y_0,z_0)\} \subset D$ which is assumed nonempty.  Similarly for the lower bound, Theorem \ref{thm:lower_uniform}, it suffices to have $u \leq g(\cdot,y_0,z_0)$ on $\partial D$.
\end{remark}

The lower bound uses a lemma of Guillen and Kitagawa's \cite{GuillenKitagawa17}. We use it only in the case of the transformed generating function.  This simplifies the proof, which we give after the proof of Theorem \ref{thm:lower_uniform}. 

\begin{lemma}\cite[Lemma 6.1]{GuillenKitagawa17}\label{lem:gk}
 Let $g$ be the transformed generating function from Lemma \ref{lem:transform_facts}. Suppose $u$ is $g$-convex and $D := \{u \leq 0\}$ has $0$ as the centre of its minimum ellipsoid. Set $h = \sup_{D} \vert u \vert $. There are constants $C,K > 0$ depending only $\Vert g \Vert_{C^3}$ such that
  \[ Yu\left(\frac{1}{K}D\right) \subset Y\vee_{D,Ch}(0),\]
  where $\vee_{D,Ch}$ is the $g$-cone with base $D \times \{0\}$ and vertex $(0,-Ch)$.
\end{lemma}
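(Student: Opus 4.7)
The plan is to construct, for each $y\in Yu(x)$ with $x\in\tfrac{1}{K}D$, an explicit $g$-support of $\vee_{D,Ch}$ at $0$ whose $Y$-value is $y$. Working throughout in the transformed coordinates of Section~\ref{sec:transformations-1}, the natural candidate is the function $q\mapsto \overline{g}(q,y,Ch)$. The identity \eqref{eq:x-exp} together with $f^{(1)}(0,p,z)=0$ (from \eqref{eq:f-est-2}) gives $\overline{g}(0,p,z)=-z$ for every $p$, so this candidate automatically attains the vertex value $-Ch = \vee_{D,Ch}(0)$. By the definition \eqref{eq:simple} of $\vee_{D,Ch}$ as a supremum, it therefore suffices to prove
\[\overline{g}(q,y,Ch)\leq 0 \quad \text{for all } q\in\partial D,\]
since any such function then sits below $\vee_{D,Ch}$ throughout $D$ and supports it at $0$.

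I reduce this to a bound on $z_y:=g^*(x,y,u(x))$. By definition of $y\in Yu(x)$, the function $\overline{g}(\cdot,y,z_y)$ is a $g$-support of $u$ at $x$; in particular $\overline{g}(\cdot,y,z_y)\leq 0$ on $\partial D$ and $\overline{g}(x,y,z_y)=u(x)\geq -h$. From \eqref{eq:f-est-1} the bound $\overline{g}_z\leq -C^+<0$ yields
\[\overline{g}(q,y,Ch)-\overline{g}(q,y,z_y)\leq -C^+(Ch-z_y) \quad \text{on }\partial D,\]
and so the conclusion reduces to showing $z_y \leq Ch$.

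To prove $z_y\leq Ch$, I run the classical ellipsoid-balancing argument in the near-affine regime afforded by Lemma~\ref{lem:identities}. Modulo cubic perturbation terms in $|q|,|y|,|z_y|$ whose coefficients depend on $\|g\|_{C^3}$, the two $\overline{g}$-inequalities above become the near-affine
\[q\cdot y\lesssim z_y \text{ on }\partial D, \qquad z_y\lesssim x\cdot y+h.\]
Since $0$ is the centre of the minimum ellipsoid of $D$, John's theorem furnishes an ellipsoid $E$ centred at $0$ with $\tfrac{1}{n}E\subset D\subset E$. Writing $E$ as the image of the unit ball under an invertible matrix $A$, one has $\sup_{q\in D}q\cdot y\geq |A^T y|/n$, while $x\in\tfrac{1}{K}E$ gives $|x\cdot y|\leq |A^T y|/K$. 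Combining,
\[\tfrac{1}{n}|A^T y|\lesssim z_y\lesssim \tfrac{1}{K}|A^T y|+h,\]
so the choice $K = 2n$ (say) forces $|A^T y|\lesssim h$ and hence $z_y\lesssim h$; taking $C$ sufficiently large then delivers $z_y\leq Ch$.

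The main obstacle is the bookkeeping of the cubic remainder terms from Lemma~\ref{lem:identities}: to absorb them into the near-linear inequalities one must leverage the smallness of $\text{diam}(D)$ and $h$ already built into the surrounding hypotheses, together with the preliminary bound $|y|\lesssim h/\text{diam}(D)$-type estimate afforded by the upper cone estimate, Lemma~\ref{lem:upper_est}. Once this tracking is carried out, both $C$ and $K$ depend only on $n$ and $\|g\|_{C^3}$, and the argument reduces to the familiar Monge--Amp\`ere cone--subdifferential comparison.
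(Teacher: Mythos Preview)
Your reduction is correct and matches the paper: letting $g_0(\cdot)=\overline g(\cdot,y,z_y)$ be the support at $x$, one has $g_0(0)=-z_y$, and it suffices to prove $z_y\le Ch$; then $\overline g(\cdot,y,Ch)\le g_0\le 0$ on $\partial D$ and $y\in Y\vee_{D,Ch}(0)$ follows exactly as you say.

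The gap is in how you propose to prove $z_y\le Ch$. Your John-ellipsoid argument would be the standard Monge--Amp\`ere proof if the expansion were exactly affine, but the cubic remainders $a^{(1)}_{ijk}q_iq_jy_k$ in \eqref{eq:x-exp} are of size $C|q|^2|y|$ and cannot be absorbed the way you suggest. Concretely, your inequality (I) at the maximising boundary point $q^*$ becomes
\[
\tfrac{1}{n}|A^Ty|\;\le\; q^*\cdot y\;\le\; C^- z_y + C\,d^2|y|,
\]
and your inequality (II) at $x\in\tfrac1K D$ gives
\[
C^+ z_y \;\le\; h + \tfrac{1}{K}|A^Ty| + C\,\tfrac{d^2}{K^2}|y|.
\]
Substituting and choosing $K$ large kills the $|A^Ty|$ term, but leaves $z_y\lesssim h + d^2|y|$. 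For highly eccentric sections (think $D$ of width $\epsilon$ in the $\hat y$-direction and diameter $d$), one only has $|A^Ty|\lesssim z_y$, not $|y|\lesssim z_y$, and $d^2|y|$ can dwarf both $h$ and $z_y$. Neither smallness of $d$ nor of $h$ repairs this, since the obstruction is the aspect ratio $d/\sigma_{\min}(A)$, which is unbounded. Your appeal to Lemma~\ref{lem:upper_est} for a bound $|y|\lesssim h/\mathrm{diam}(D)$ is circular: that lemma estimates $Y\vee(0)$, which is precisely the set you are trying to place $y$ in; and in any case its conclusion would give $|y|$ controlled by $h$ over the \emph{inradius}, not the diameter, which is again the wrong scale.

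The paper sidesteps this entirely by using the Loeper maximum principle through the quantitative quasiconvexity inequality \eqref{eq:gqq}. One takes the single ray from $0$ through $x$ to $x_1\in\partial D$, writes $x=\theta'x_1$ with $\theta'\le 1/K$, and applies \eqref{eq:gqq}:
\[
g(x,y,0)\le M\theta'\,[g(x_1,y,0)]_+.
\]
Combined with the elementary $g_z$-bounds $g(\cdot,y,0)\ge g_0(\cdot)+C^-z_y$ and $g(\cdot,y,0)\le C^+z_y$ (no cubic terms, no $|y|$), this yields $-h + C^-z_y \le M\theta'C^+z_y$, and for $K$ large one reads off $z_y\le Ch$. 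The point is that \eqref{eq:gqq} is a genuinely nonlinear (LMP-driven) multiplicative estimate along a one-dimensional segment, so it is insensitive to the eccentricity of $D$ and never needs $|y|$ to be quantified. This is the missing idea in your approach.
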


\begin{theorem}\label{thm:lower_uniform}
  Assume $g$ is a generating function satisfying LMP and $u$ is a $g$-convex solution of \eqref{eq:caff-style},\eqref{eq:dir}. There is a constant $C$ depending on $\lambda,n,\Vert g \Vert_{C^3}$ such that
  \[ C \vert D \vert ^2 \leq \sup_{D} \vert u(\cdot)-g(\cdot,y_0,z_0) \vert ^n.\]
\end{theorem}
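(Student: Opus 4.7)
The plan is to prove the dual inequality to Theorem \ref{thm:upper_uniform}: bound $|Yu|$ on a subset of $D$ \emph{from above} by the $Y$-image of a $g$-cone, and then use the affine cone estimate of Lemma \ref{lem:upper_est} together with the min-ellipsoid of $D$ to turn this into $h^n/|D|$. After fixing $x_0$ to be the centre of the minimum ellipsoid of $D$ (the statement $\sup_D|u-g(\cdot,y_0,z_0)|$ is independent of this choice) and applying the transformations of Section \ref{sec:transformations-1}, I reduce to the case where $D$ is convex, $0\in D$ is the centre of the min-ellipsoid, $\overline g$ satisfies the identities of Lemma \ref{lem:identities}, and $u\equiv 0$ on $\partial D$ with $h:=\sup_D|u|$. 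The final estimate will carry a controlled factor of $C_g$ coming from \eqref{eq:xxbarest} and \eqref{eq:u-cg-dep}.

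In this normalized setting I invoke Lemma \ref{lem:gk} to obtain $Yu(\tfrac{1}{K}D)\subset Y\vee_{D,Ch}(0)$, and then Lemma \ref{lem:upper_est} (applicable since $\operatorname{diam}(D)$ and $h$ are assumed small) to obtain $Y\vee_{D,Ch}(0)\subset 2\,\partial K(0)$, where $K$ is the affine cone with base $D$ and vertex $(0,-Ch)$. A direct calculation from the definition \eqref{eq:k-def} identifies the subdifferential of this affine cone as
\[ \partial K(0) = Ch\cdot D^{\circ}, \qquad D^{\circ}:=\{p\in\mathbf R^n : p\cdot x\le 1\ \text{for all } x\in D\}, \]
i.e.\ the polar body of $D$. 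Since $0$ is the centre of the minimum ellipsoid $E$ of $D$ I have $\tfrac{1}{n}E\subset D\subset E$, hence $D^{\circ}\subset (\tfrac{1}{n}E)^{\circ}=nE^{\circ}$, and the standard identity $|E|\cdot|E^{\circ}|=\omega_n^2$ for centred dual ellipsoids gives
\[ |\partial K(0)| = (Ch)^n |D^{\circ}| \le C_n h^n / |E| \le C_n h^n / |D|. \]

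Combining, $|Yu(\tfrac{1}{K}D)|\le 2^n |\partial K(0)| \le C h^n / |D|$. On the other hand the hypothesis \eqref{eq:caff-style} gives $|Yu(\tfrac{1}{K}D)|\ge \lambda |\tfrac{1}{K}D| = \lambda K^{-n}|D|$, so
\[ \lambda K^{-n} |D|^2 \le C h^n, \]
which rearranges to the claimed inequality $C|D|^2\le h^n$ and transfers back to the original coordinates with constants depending on $\lambda, n, \Vert g\Vert_{C^3}$ (the $C_g$-dependence absorbs into the latter). The only point that needs care is the identification $\partial K(0)=Ch\cdot D^{\circ}$ and the reduction of $|D^{\circ}|$ via the minimum ellipsoid — I expect this polar-body step to be the main technical ingredient, since every other inclusion is a direct application of Lemmas \ref{lem:upper_est} and \ref{lem:gk}. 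The comparison principle is not needed here because Lemma \ref{lem:gk} already provides the one-sided inclusion of subdifferentials in the direction we want.
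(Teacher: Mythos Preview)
Your proposal is correct and follows essentially the same route as the paper. Both arguments transform so that $D$ is convex with $0$ the centre of its minimum ellipsoid, invoke Lemma~\ref{lem:gk} to get $Yu(\tfrac{1}{K}D)\subset Y\vee_{D,Ch}(0)$, then Lemma~\ref{lem:upper_est} to pass to the classical cone $\hat K$, and finally bound $|\partial\hat K(0)|\le Ch^n/|D|$ via the minimum ellipsoid; the paper simply cites this last step as ``standard convex geometry making use of the minimum ellipsoid'' whereas you spell it out via the polar body identification $\partial\hat K(0)=Ch\cdot D^\circ$ and $|E||E^\circ|=\omega_n^2$. One minor wording issue: your phrase ``fixing $x_0$ to be the centre of the minimum ellipsoid of $D$'' is slightly premature, since the original $D$ need not be convex---as in the paper, one first applies the $x\mapsto q$ change (which depends only on $y_0,z_0$) to make $D$ convex, then chooses/translates so that $0$ is the centre of the minimum ellipsoid, and only then performs the remaining transformations so that the identities of Lemma~\ref{lem:identities} hold at that point.
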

\begin{proof}
  We assume $y_0,z_0 = 0$ then pick any $x_0 \in D$ and make the change
  \eqref{eq:xdef} so that $D$ is convex. After translation we assume the minimum ellipsoid for $D$ is centred on $0$. We make the remaining changes in Section \ref{sec:transformations-1} so  $g$ satisfies Lemma \ref{lem:identities} and $u = 0$ on $\partial D$. 

Let  $\vee = \vee_{\frac{1}{K}D,Ch}$ be the $g$-cone from Lemma \ref{lem:gk} and note 
  \[ \frac{\lambda}{K^n} \vert D \vert  \leq \left \vert Yu\left(\frac{1}{K}D\right)\right \vert  \leq  \vert Y\vee(0) \vert .\]  Let $\hat{K}$ be the classical cone with the same base and vertex as $\vee$. By Lemma \ref{lem:upper_est}
  \begin{equation}
    \label{eq:lower-1}
     \frac{\lambda}{K^n} \vert D \vert  \leq C \vert \partial \hat{K}(0) \vert . 
  \end{equation}
The estimate 
\begin{equation}
  \label{eq:lower-2}
   \vert \partial \hat{K}(0) \vert  \leq \frac{Ch^n}{ \vert D \vert },
\end{equation}
follows from standard convex geometry making use of the minimum ellipsoid. Combining \eqref{eq:lower-1} and \eqref{eq:lower-2} completes the proof. 
\end{proof}

 The proof of Lemma \ref{lem:gk} uses the Loeper maximum principle via the quantitative quasiconvexity interpretation of Guillen and Kitagawa. More precisely if the Loeper maximum principle holds then so does the following statement: Let $x_0,x_1 \in U$, $y_0,y_1 \in V$ and $u_0 \in J$ be given. Let $\{x_\theta\}_{\theta \in [0,1]}$ denote the $g$-segment (in our coordinates, line segment) from $x_0$ to $x_1$ with respect to $y_0,z_0 = g^*(x_0,y_0,u_0)$ and set $z_1 = g^*(x_0,y_1,u_0)$. Then there is $M$ depending only on $\Vert g \Vert_{C^3}$ such that
  \begin{align}
    g(x_\theta,y_1,z_1) - g(x_\theta,y_0,z_0) \leq M \theta [g(x_1,y_1,z_1) - g(x_1,y_0,z_0)]_{+}.\label{eq:gqq}
  \end{align}
  We outline a short proof based on recent work of Loeper and Trudinger \cite{LoeperTrudinger21} in the Appendix, Lemma \ref{lemma:lmp-gqq} (see also \cite{Jeong21}). 

\begin{proof}[Proof. (Lemma \ref{lem:gk})] 
  We assume $K$ has been fixed small, to be chosen in the proof, and show there is $C$ such that
  \begin{equation}
    \label{eq:gk-main}
      Yu\left(\frac{1}{K}D\right) \subset Y\vee_{D,Ch}(0).
    \end{equation}
  To this end, fix $x \in \frac{1}{K}D$ and $y \in Yu(x)$. Let $g_0$ be the corresponding support
  \[ g_0(\cdot):= g(\cdot,y,g^*(x,y,u(x))) =g(\cdot,y, \vert g_0(0) \vert ),\]
  where the second equality is because the transformed generating function satisfies $g^*(0,y,u) = -u$. 
  To prove \eqref{eq:gk-main} it suffices to show there is $C$ (independent of $x,y$) such that
  \begin{equation}
    \label{eq:gk-equiv}
        \vert g_0(0) \vert  \leq Ch.
  \end{equation}
  For in this case the function $g(\cdot,y,Ch)$ passes through the vertex of $\vee = \vee_{D,Ch}$ and lies below $g_0$ so is nonpositive on $D$. Thus $y \in Y\vee(0)$.

  So let's prove \eqref{eq:gk-equiv}. By a Taylor series for $h(t) = g(x,y,t\vert g_0(0) \vert)$ there is $C^+,C^-$ (depending only on $C_g$) such that for any $x' \in D$
  \begin{align}
     \label{eq:g1-1} g(x',y,0) &\geq g_0(x') + C^- \vert g_0(0) \vert \\
   \label{eq:g1-2} g(x',y,0)  &\leq g_0(x') + C^+ \vert g_0(0) \vert  \leq C^+ \vert g_0(0) \vert .   
  \end{align}
 
  Now let $\{x_\theta\}_{\theta \in [0,1]}$ denote the $g$-segment with respect to $0,0$ that starts at $0$, passes through $x$, and hits $\partial D$ at some $x_1$. Because $x \in \frac{1}{K}D$ there is $\theta' \leq 1/K$ with $x_{\theta'} = x$. So \eqref{eq:gqq} implies
  \begin{align}
       g(x,y,0) \leq M\theta' [g(x_1,y,0)]_+, \label{eq:pos-part}
  \end{align}
  where we've used \eqref{eq:gqq} with $y_1=y$ and $y_0,z_0,u_0 = 0$ so $g(\cdot,y_0,z_0) = 0$.
  If $g(x_1,y,0) \leq 0$, then  $g(x,y,0) \leq 0$ and we obtain \eqref{eq:gk-equiv} from \eqref{eq:g1-1} with $x' = x$ (because $g_0$ is a support at $x$ we have $g_0(x)=u(x) \geq -h$).

  Otherwise, combine \eqref{eq:pos-part} with \eqref{eq:g1-1} on the left hand side and \eqref{eq:g1-2} on the right hand side to obtain
  \[ g_0(x) + C^- \vert g_0(0) \vert  \leq M\theta' C^+ \vert g_0(0) \vert . \]
  Recalling $g_0(x) \geq -h$ and choosing $K$ large to ensure $M\theta' C^+ \leq C^-/2$ completes the proof. \end{proof}

\section{Strict convexity assuming a $g$-convex containing domain}
\label{sec:strict-conv}
In this section we prove the strict $g$-convexity, that is Theorem \ref{thm:strict-convexity}, by adapting Chen and Wang's work from the optimal transport case.

\begin{proof} [Proof (Theorem \ref{thm:strict-convexity}).]
  We extend $u$ to $\tilde{u}$ defined on $\overline{U}$ as 
\[ \tilde{u}(x) := \sup\{g(x,y_0,z_0); g(\cdot,y_0,z_0) \text{ is a $g$-support of $u$ in }\Omega\}.\]
This extension is equal to $u$ on $\Omega$ and satisfies
\begin{align}
  \label{eq:gjeext}  \lambda\chi_{\overline{\Omega}} \leq \det DY\tilde{u} &\leq \Lambda\chi_{\overline{\Omega}} ,\\
  \label{eq:2bvpext}Y\tilde{u}(\overline{U}) &= \overline{\Omega^*}.
\end{align}
We assume work with $\tilde{u}$, though  keep the notation $u$.

  For a contradiction we suppose there is a support $g(\cdot,y_0,z_0)$ such that
  \[ G:= \{x \in  \overline{U}; u(x) = g(x,y_0,z_0)\},\]
  contains more than one point in $\Omega$. The first step of the proof is to show that, after the coordinate transform with respect to $y_0,z_0$, any extreme point of the (now convex) set $G$ is in $ \partial U$. The second step is to choose a particular extreme point and obtain a contradiction from the fact that it is in $\partial U$. 

  \textit{Step 1. Extreme points cannot be in the interior}\\
  Without loss of generality $y_0,z_0=0$. Applying the transformation \eqref{eq:xdef} of the $x$-coordinates we have that $G$ and $U$ are convex. Assume, for a contradiction, there is an extreme point of $G$, without loss of generality 0, which is an interior point of $U$. After transforming the $y$ coordinates and generating function as in Section \ref{sec:transformations-1} we have
   \[ G  = \{ x;u(x) = 0 = g(x,0,0) \},\]
  and $u\geq0$. Choose a plane $P$ that supports $G$ at $0$ and rotate so that 
   \begin{align}
P = \{x_{1} = 0\},\quad\quad  G \cap P = \{0\},\quad\quad G \subset  \{x; x_1 \leq 0\} \quad \text{ and }\quad -a e_1  \in G,
   \end{align}
   for some $a >0$.
 Set
  \[ G^h := \{u < g(\cdot,0,-h)\}.\]
  Because $\Omega$ is open there is $x \in G\cap \Omega$ with $B:=B_r(x) \subset \Omega$ for sufficiently small $r>0$. In particular $\det DYu \geq \lambda$ on $B$. Choose $r,h$ small enough to ensure Theorem \ref{thm:lower_uniform} holds on $B\cap G^h $ (recalling Remark \ref{rem:ineq}). Then
  \begin{equation}
    \label{eq:ghest1}
       C  \vert G^h\cap B \vert ^{2/n} \leq h.
  \end{equation}
The $g$-convexity of $G^h$ (with respect to $0,-h$) implies
  \begin{equation}
  C  \vert G^h \vert  \leq   \vert G^h\cap B \vert  .\label{eq:convballest}
  \end{equation}
  where $C$ depends on $r$, $g$, and some upper bound $h_0 \geq h$ \footnote{For details see  \cite[pg. 101]{Figalli17} for the convex case and \cite{RankinThesis} for the $g$-convex case.}. By \eqref{eq:ghest1} and \eqref{eq:convballest}
  \begin{equation}
    \label{eq:to_cont}
    C  \vert G^h \vert ^{2/n} \leq h.
  \end{equation}

To obtain a contradiction by Theorem \ref{thm:upper_uniform_close} we consider section that behave like $\{u < t(x_1+a) \}$ in the convex case. By \eqref{eq:y-exp} for $t$, sufficiently small,
 \begin{equation}
   \label{eq:gstar-rep}
      g(-ae_1,te_1,0) = -at + O(t^2) < 0.
 \end{equation}
  This implies $ g(\cdot,te_1,0) < g(\cdot,te_1,g^*(-ae_1,te_1,0))$, since the second function is $0$ at $-ae_1$, i.e greater than the first. Subsequently
  \[ D_t := \{u < g(\cdot,te_1,g^*(-ae_1,te_1,0))\},\]
  satisfies $0 \in D_t$, and $-ae_1 \in \partial D_t$. On the other hand, the convergence $g(\cdot,te_1,g^*(-ae_1,te_1,0)) \rightarrow 0$ as $t \rightarrow 0$ implies
  \begin{equation}
    \label{eq:lim-cond}
     a^+_t := \sup\{x_1 = x\cdot e_1; x \in D_t\} \rightarrow 0.
   \end{equation}
   Finally by the expansions in Lemma \ref{lem:identities} and \eqref{eq:gstar-rep}
   \begin{equation}
     \label{eq:0est}  \vert u(0) - g(0,te_1,g^*(-ae_1,te_1,0)) \vert  \geq Ct.
   \end{equation}

 To apply Theorem \ref{thm:upper_uniform_close} we need a convex section. Transform the $x$ coordinates according to \eqref{eq:xdef} with respect to $y_t=te_1,z_t=g^*(-a e_1,y_t,0)$. Take the line segment joining the images under this transformation of $-ae_1$ and $0$. Its length is greater than $Ca$ for a constant depending on $g$. One of the supporting planes orthogonal to this line segment converges to the plane $P = \{x_{1} = 0\}$ as $t \rightarrow 0$. The other remains a distance of at least $Ca$ from the image of $0$.   Thus Theorem \ref{thm:upper_uniform_close} implies, for some $\epsilon_t \rightarrow 0$, that
  \begin{equation}
    \label{eq:aona}
     t \leq C\epsilon_t \vert D_t \vert ^{2/n},
  \end{equation}
  where we've used \eqref{eq:0est}.
  Finally, again by \eqref{eq:y-exp},\eqref{eq:f-exp} and \eqref{eq:gstar-rep} we obtain $D_t \subset G^{Ct}$ for some $C>0$ and $t$ sufficiently small. Thus \eqref{eq:aona} with $t = h/C$ contradicts \eqref{eq:to_cont} and we conclude any extreme point of $G$ lies on $\partial U$.\\ \\
  \textit{Step 2. An extreme point that cannot be on the boundary}\\
  \textit{Step 2a. Coordinate transform}
  The argument in this step requires transforming the coordinates and generating function with respect to different points so we are explicit with the details. We begin with the original coordinates, generating function, and contact set $G = \{u \equiv g(\cdot,y_0,z_0)\}$ which we assume contains more than one point in $\Omega$. Without loss of generality $y_0,z_0= 0$. Introduce the coordinates
  \begin{equation}
    \label{eq:x-exp-1}
       \tilde{x} = \frac{-g_y}{g_z}(x,0,0).
  \end{equation}
  Pick any $x_1 \in G$ and denote its image under \eqref{eq:x-exp-1} by $\tilde{x_1}$. Define
  \[ \overline{y} = E^{-1}(x_1,0,0)[g_x(x_1,y,g^*(x_1,y,u(x_1))) - g_x(x_1,0,0)].\]
  The $E^{-1}$ factor implies, via a Taylor series in $y$, that
  \begin{equation}
    \label{eq:y-o2}
    \overline{y} = y + O( \vert y \vert ^2).
  \end{equation}
The images of $\Omega,\Omega^*$ in these coordinates are denoted by\footnote{In step 2a the overline notation is used for coordinates not closures.} $\tilde{\Omega},\overline{\Omega^*}$. Both are convex and $0 \in \overline{\Omega^*}$. By a rotation, which is applied to $\tilde{x}$ and $\overline{y}$, we assume $t_0e_1 \in \overline{\Omega^*}$ for some small $t_0$. By convexity $\overline{\Omega^*}$ contains a cone
  \begin{equation}
    \label{eq:cone-arg}
       \mathcal{C} := \{t \xi; 0 < t \leq t_0, \xi \in B_r'\},
  \end{equation}
  where $B_r '$ is a geodesic ball in the hemisphere $\mathbf{S}^{n-1} \cap \{x_1 > 0\}$ centered on $(1,0,\dots,0)$ (Figure \ref{fig:transform1}).

  Now choose a specific extreme point of $\tilde{G}$ as follows: Take the paraboloid
\[ \tilde{P}_M = \{\tilde{x} = (\tilde{x}_1,\dots,\tilde{x}_n) ; \tilde{x}_1 = -\epsilon(\tilde{x}_2^2+\dots+\tilde{x}_n^2) + M\}\]
for large $M$ and small $\epsilon$. Decrease $M$ until the paraboloid first touches $\tilde{G}$, necessarily at an extreme point $\tilde{x_0}$. We take $\tilde{P}$ as the tangent plane to $\tilde{P}_M$ at $\tilde{x_0}$ and note $\tilde{P}$ supports $\tilde{G}$ at $\tilde{x_0}$. Provided $\epsilon$ is sufficiently small (depending on  $\text{diam}(G)$) the normal to $\tilde{P}$ is in $B_r'$ (Figure \ref{fig:transform1}).

  Our final coordinate transform is
  \begin{equation}
    \label{eq:x-fin}
      \overline{x} \mapsto -g_z(x_0,y_0,z_0)[\tilde{x}-\tilde{x_0}].
  \end{equation}
  which is a dilation and translation (but no rotation) of the $\tilde{x}$ coordinates. In these coordinates the image of $U$, denoted $\overline{U}$, is convex, and provided $\epsilon$ was chosen small depending also on $ C_g $ (but importantly independent of $x_0$), we have the normal to $\overline{P}$, the image of $\tilde{P}$ under \eqref{eq:x-fin}, at $0$ is still in $B_r'$. Set
  \begin{align*}
    \tilde{g}(x,y,z) &= \frac{g_z(x_0,0,0)}{g_z(x,0,0)}[g(x,y,g^*(x_0,y,u(x_0)-z)) - g(x,0,0)],
   \end{align*}
  and define $\overline{g}(\overline{x},\overline{y},z) = \tilde{g}(x,y,z)$ for $\overline{x},\overline{y}$ the image of $x,y$. By a Taylor series in $y,z$, using $\tilde{g}_y(x,0,0) = \overline{x}, \tilde{g}_z(x,0,0)= -1$ and \eqref{eq:y-o2}, we have
  \begin{equation}
    \label{eq:genexp-2}
       \overline{g}(\overline{x},\overline{y},z) = \overline{x} \cdot \overline{y} -z+ O( \vert \overline{y} \vert ^2)+O( \vert \overline{y} \vert  \vert z \vert ) +O( \vert z \vert ^2)
  \end{equation}

  Finally we rotate the $\overline{x}$ and $\overline{y}$ coordinates so that the supporting plane $\overline{P}$ becomes
  \[\overline{P} = \{\overline{x}=(\overline{x}_1,\dots,\overline{x}_n); \overline{x}_1 = 0\},\] and $\overline{G} \subset \{ \overline{x} ; \overline{x}_1 \leq 0\}$. Because the normal to $\overline{P}$ was in $B_r'$ after this rotation we still have $\overline{y_0}':=t_0e_1 \in \overline{\Omega^*}$. This is by inclusion of the cone \eqref{eq:cone-arg} (Figure \ref{fig:transform1}).  Finally pick any $\overline{x_0'}\in \overline{G}$ such that $\overline{x_0'} \cdot \overline{y_0 '}\neq 0$. We are in exactly the setting to use the transformation (4.7) from \cite{ChenWang16} which preserves that $\overline{g}$ has the form \eqref{eq:genexp-2} and after which we have
  \begin{align*}
      &\overline{G}  \subset \{\overline{x}; \overline{x}_1 \leq 0\},  &&\overline{G} \cap \{\overline{x}_1 = 0\} = 0,\\
    &-a e_1  \in \overline{G}, && be_1 \in \overline{\Omega^*}. 
  \end{align*}
  Step 2b takes place in this setting. For ease of notation we drop the overline.

  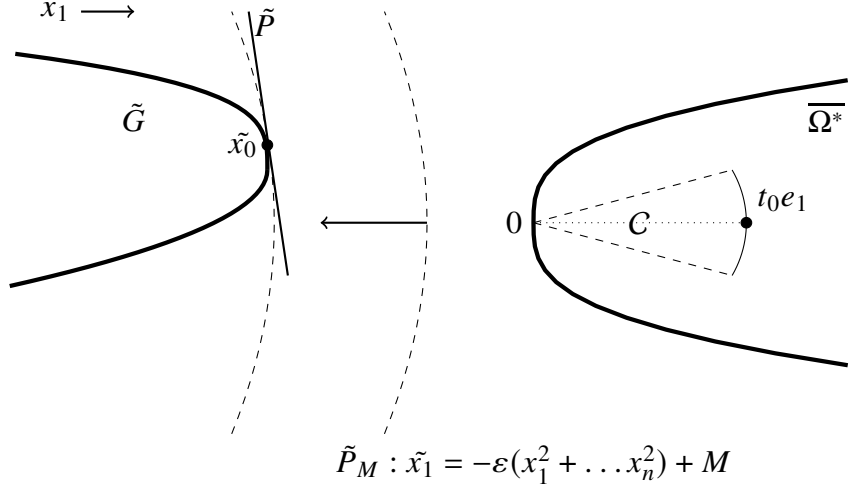
\begin{figure}
  \begin{tikzpicture}[scale=0.7]
    \node [left] at (1.5,9) {$x_1$};
    \draw [->,thick] (1.5,9) -- (2.5,9);
    \node (c) at (15.5,7) {$\overline{\Omega^*}$};
    \draw[ultra thick, domain=2.3:7.7, variable=\y] plot ({10+0.3*(\y-5)*(\y-5)*abs((\y-5))}, \y);
    \node [above right] at (14,5) {$t_0e_1$};
    \draw[fill] (14,5) circle [radius=0.1];
    \draw [dotted] (10,5) -- (14,5) ;
    \draw[domain=4:6, variable=\y] plot ({12+sqrt(4-pow((\y-5),2))}, \y);
    \draw [dashed] (10,5) -- (13.73205,4) ;
    \draw [dashed] (10,5) -- (13.73205,6) ;
    \node  at (12,5) {$\mathcal{C}$};
    \node  [left] at (10,5) {$0$};

    \draw[ultra thick, domain=6:8.2, variable=\y] plot ({5-0.3*pow((\y-6),3.5)}, \y)  ;
    \draw[ultra thick, domain=3.8:6, variable=\y] plot ({5-pow((\y-6),2)}, \y)  ;

    \draw[dashed, domain=1:9, variable=\y] plot ({8-0.05*pow((\y-5),2)}, \y)  ;
    \draw[dashed, domain=1:9, variable=\y] plot ({5.13-0.05*pow((\y-5),2)}, \y)  ;
    \draw[fill] (5.0,6.47196) circle [radius=0.1];
    \node  at (2.5,7) {$\tilde{G}$};
    \node [below] at (10,1) {$\tilde{P}_M: \tilde{x_1} = -\epsilon(x_1^2+\dots x_n^2)+M$};
    \draw [->,thick] (8,5) -- (6,5);
    \node  [left] at (5.021666,6.47196) {$\tilde{x_0}$};
    \draw[thick,domain=4:9, variable=\y] plot ({-0.147196*\y+5.974303}, \y)  ;
     \node at (4.95,8.8) {$\tilde{P}$};
  \end{tikzpicture}
  \caption{The choice of $\overline{y}$ coordinates implies $\overline{\Omega^*}$ contains the cone $\mathcal{C}$. By choosing $\epsilon$ sufficiently small a normal to the paraboloid at the extreme point lies in the cone. Thus after a rotation so $\tilde{P}$ becomes $\overline{P} = \{\overline{x};\overline{x}_1=0\}$, $\overline{\Omega^*}$ still contains $t_0e_1$. }\label{fig:transform1}
\end{figure}

\textit{Step 2b. Obtaining the contradiction in this setting}\\
  We see $F_\tau := G\cap \{x_1 \geq -\tau\}$ decreases to $\{0\}$ as $\tau \rightarrow 0$. Because $\Omega$ lies a positive distance from $0 \in \partial U$, for $\tau$ fixed sufficiently small $\text{dist}(\Omega,F_\tau) >  0$. On the other hand, provided in addition, $\tau < -a/2$ the set
  \begin{equation}
    \label{eq:new-set}
     \{ u < g(\cdot,te_1,g^*(-\tau e_1,te_1,u(-\tau e_1))\} ,   
  \end{equation}
 decreases to $F_\tau$ as $t\rightarrow0$. This is because by calculations similar to part 1 
  \[ g(x,te_1,g^*(-\tau e_1,te_1,u(-\tau e_1)) = (x_1+\tau)t+O(t^2).\]
 Thus for $t$ small the set in \eqref{eq:new-set} is disjoint from $\overline{\Omega}$. So for all $x \in \overline{\Omega}$,
\begin{equation}
  \label{eq:final_cont}
  u(x) > g(x,te_1,g^*(-\tau e_1,te_1,u(-\tau e_1)).
\end{equation}

On the other hand $te_1 \in \Omega^*$ for $t$ small. Thus there is $x_2 \in \overline{\Omega}$ with $te_1 \in Yu(x_2)$. So
\begin{equation}
  \label{eq:gives_cont}
   u(x) \geq g(x,te_1,g^*(x_2,te_1,u(x_2))),
\end{equation}
for all $x \in U$. For a contradiction evaluate \eqref{eq:gives_cont} at $x = -\tau e_1$, then apply
\[g(x_2,te_1,g^*(-\tau e_1,te_1,\cdot)) \]
to both sides. The resulting inequality contradicts \eqref{eq:final_cont} with $x = x_2$ and thereby completes the proof.
\end{proof}

\begin{remark}\label{rem:unif-dom}
 A corollary of Theorem \ref{thm:strict-convexity} is that when $\Omega$ is uniformly $g$-convex with respect to $u$ and $\overline{\Omega} \subset U$ then $u$ is strictly $g$-convex: no convexity condition is needed on $U$. This is immediate; for $\epsilon$ sufficiently small $\Omega_\epsilon := \{x ; \text{dist}(x,\Omega) < \epsilon\}$ is uniformly $g$-convex with respect to $u$ and strictly contains  $\Omega$.
\end{remark}

It would be desirable to prove the strict convexity under the extension of the hypotheses of Figalli, Kim, McCann \cite{FKM13}. These hypotheses are strict $c$-convexity of both domains, which is stronger than here --- however they only require the cost function be defined on $\overline{\Omega}\times\overline{\Omega^*}$. Unfortunately when one attempts to extend their crucial Theorem 5.1 a $g_{i,j,z}$ term appears in the analogue of their equation (5.2) and prohibits a similar proof.

\section{$C^1$ differentiability of strictly $g$-convex solutions}
\label{sec:furth-cons-strict}
Once the strict convexity has been proved we obtain the $C^1$ differentiability using similar techniques.
\begin{theorem}
  Assume $g$ satisfies LMP and $\lambda,\Lambda > 0$. Suppose $u:\Omega \rightarrow \mathbf{R}$ is a strictly $g$-convex Aleksandrov solution of $ \lambda \leq \det DYu \leq \Lambda.$  Then $u \in C^1(\Omega)$.
\end{theorem}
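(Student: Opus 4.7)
The plan is to argue by contradiction. Suppose $u$ fails to be differentiable at some $x_0 \in \Omega$. Since $u$ is semiconvex, $\partial u(x_0)$ is a convex set containing more than one point, and by Lemma \ref{lem:loeper-consequences}(3) together with the fact that $Y(x_0, u(x_0), \cdot)$ is a local diffeomorphism (a consequence of A1, A2), the same holds of $Yu(x_0)$. By Lemma \ref{lem:loeper-consequences}(1), $Yu(x_0)$ is $g^*$-convex with respect to $(x_0, u(x_0))$, hence contains a nondegenerate $g^*$-segment; replacing it by a sufficiently short subsegment starting at an endpoint $y_0$, I may assume its length is as small as desired.

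Apply the transformation of Section \ref{sec:transformations-1} based at $(x_0, y_0, u(x_0))$; after a rotation this $g^*$-segment becomes a line segment $\{t e_1 : t \in [0, a]\}$ inside $\overline{Y}\overline{u}(0)$. Strict $\overline{g}$-convexity of $\overline{u}$ transfers via Lemma \ref{lem:transform_facts} and the inequalities \eqref{eq:caff-style-ineq} are preserved. For each small $t \in (0, a)$ pick the support $\overline{g}(\cdot, t e_1, z_t)$ at the origin, shift it upward by $h > 0$ via the choice $\overline{g}(0, t e_1, z_t - \eta_t) = h$, and set
\[
S_{t, h} := \{x : \overline{u}(x) < \overline{g}(x, t e_1, z_t - \eta_t)\}.
\]
Strict $\overline{g}$-convexity implies $S_{t, h} \to \{0\}$ as $h \to 0$, so for all sufficiently small $h$ the section $S_{t, h}$ is compactly contained in the transformed image of $\Omega$.

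The main geometric step is a slab estimate for $S_{t,h}$. Since both $\overline{g}(\cdot, 0, 0)$ and $\overline{g}(\cdot, a e_1, z_a)$ support $\overline{u}$ at $0$, the expansion \eqref{eq:y-exp} (with $a$ chosen small) yields $\overline{u}(x) \geq \max(0, a x_1) + O(\vert x \vert^2)$, while the shifted support satisfies $\overline{g}(x, t e_1, z_t - \eta_t) = t x_1 + h + O(\vert x \vert^2 + h \vert x \vert)$. Substituting these into the definition of $S_{t, h}$ forces $x_1 < h/(a - t) + O(\cdot)$ from the branch $\max = a x_1$ and $x_1 > -h/t + O(\cdot)$ from the branch $\max = 0$. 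Hence $\sup_{x \in S_{t,h}} x \cdot e_1 \leq C h/a$, while $S_{t,h}$ still contains a segment through $0$ parallel to $e_1$ of length at least $c h/t$. In the notation of Theorem \ref{thm:upper_uniform_close} with $\nu = e_1$, this means $\epsilon \leq C t / a$.

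Combining Theorem \ref{thm:upper_uniform_close} applied at the vertex $0$ with Theorem \ref{thm:lower_uniform}, both for the $\overline{g}$-convex section $S_{t, h}$, gives
\[
h^n = \vert \overline{u}(0) - \overline{g}(0, t e_1, z_t - \eta_t) \vert^n \leq C \epsilon \vert S_{t, h} \vert^2 \leq \frac{C t}{a} \vert S_{t, h} \vert^2 \leq \frac{C' t}{a} h^n.
\]
Fixing $t$ small enough that $C' t / a < 1$ and letting $h \to 0$ yields the contradiction. Hence $\partial u$ is single-valued at every point of $\Omega$, which is the asserted $C^1$ regularity. The main technical overhead is the bookkeeping with two nested coordinate transformations (the outer one at $(x_0, y_0, u(x_0))$ and the convexifying changes inside the proofs of the section estimates), together with verification of the elementary slab estimate; beyond that, the argument is a direct packaging of the results of Section \ref{sec:uniform-estimates}.
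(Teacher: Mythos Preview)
Your overall strategy---combine the ``close to boundary'' estimate (Theorem~\ref{thm:upper_uniform_close}) with the lower uniform estimate (Theorem~\ref{thm:lower_uniform}) on a well-chosen section---matches the paper's. The gap is in your slab estimate: you assert that $S_{t,h}$ contains a segment through $0$ parallel to $e_1$ of length at least $ch/t$, but the inequalities you derive do not give this. From $\overline{u}(x)\geq 0$ and $\overline{u}(x)<\overline{g}(x,te_1,z_t-\eta_t)\approx t x_1+h$ you correctly obtain $x_1>-h/t+O(\cdot)$ for every $x\in S_{t,h}$; this is a \emph{containment} bound (the section lies inside the slab $\{-Ch/t\leq x_1\leq Ch/(a-t)\}$), not a statement that the section \emph{fills} it. To know $-s e_1\in S_{t,h}$ for $s$ up to $ch/t$ you would need an \emph{upper} bound on $\overline{u}(-se_1)$, and you have none. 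Concretely, nothing in your setup prevents $\partial\overline{u}(0)$ from also containing a point $-a'e_1$ with $a'>0$: then $\overline{u}(-se_1)\geq a's+O(\cdot)$ and the section only extends to $s\approx h/(a'+t)$, so your $\epsilon\leq Ct/a$ fails.

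The paper closes this gap by taking $y_0$ corresponding to an \emph{extreme point} $p_0$ of $\partial u(x_0)$, not merely an endpoint of a chosen segment. After the transformation and a rotation so that $\partial\overline{u}(0)\subset\{p_1\geq 0\}$ with $ae_1\in\partial\overline{u}(0)$, the extreme-point property gives $\overline{u}(-te_1)=o(t)$: the section $\{\overline{u}<h/2\}$ then provably contains $-R(h)h\,e_1$ with $R(h)\to\infty$ as $h\to 0$. This is exactly the long segment needed for Theorem~\ref{thm:upper_uniform_close}, and it yields $\epsilon\sim 1/R(h)\to 0$, contradicting the lower bound. Your argument can be repaired by making this choice of $y_0$ at the outset; the parameter $t$ then becomes unnecessary.
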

\begin{proof}
  Suppose for a contradiction at some $x_0$, assumed to be $0$, $\partial u(0)$ contains more than one point. Let $p_0$ be an extreme point of $\partial u(0)$ with $y_0 := Y(0,u(0),p_0)$ and $g(\cdot,y_0,z_0)$ the corresponding support. Without loss of generality $u(0),y_0 = 0$. Fix $h>0$ small, and apply the transformations in Section \ref{sec:transformations-1} so $g$ satisfies \eqref{eq:x-exp} and $u(0)=-h$. We set $  G^h = \{ u < 0 = g(\cdot,0,0)\},$  which is a section and thus convex.
  
  After these transformations, by \eqref{eq:x-exp} and \eqref{eq:f-exp}, $p=g_x(0,0,h)$ is an extreme point of $\partial u(0)$ satisfying $ \vert p \vert  = O(h^2)$. Thus after subtracting $-h+p\cdot x$ from both $u$ and the generating function we have
  \begin{equation}
    \label{eq:c1-subset}
       \{u < h/2\} \subset  G^h = \{u < h - p \cdot x \} \subset \{u < 3h/2\}.
  \end{equation}
  We've used that by the strict convexity and an initial choice of the section small  $ \vert x \vert $ is as small as desired. Moreover, $0$ is now an extreme point of $\partial u(0)$ and we assume, after a rotation, for some small $a>0$
  \begin{align}
  \nonumber  \{ p; p_1 \geq 0\} &\supset \partial u(0) \\
 \label{eq:subdiff2}   ae_1 &\in \partial u(0) ,
  \end{align}
  for some small $a>0$. This implies  $u(-te_1) = o(t)$. Thus $ \{ u < h/2\},$
  contains $-R(h)he_1$ for some positive function $R$ satisfying $R(h) \rightarrow \infty$ as $h \rightarrow 0$ (for details, see \cite[Lemma A.4]{RankinThesis}).

  We recall inequality \ref{eq:almost-affine}, so that for $x$ in a small neighbourhood of $0$ we have
  \[ u \geq g(x,Y(0,ae_1,u(0)),h) \geq \frac{x_1a}{2} -Ch, \]
  where we've used $Y(0,ae_1,u(0)) = ae_1+O(h)$. So for $h$ small, the second subset relation in \eqref{eq:c1-subset} implies $\sup_{G^h}x_1 \leq Ch/a$.

  Thus Theorem \ref{thm:upper_uniform_close} implies
  \[ h \leq \frac{C}{R(h)} \vert G^h \vert ^{2/n}.\]
  On the other hand Theorem \ref{thm:lower_uniform} yields $ \vert G^h \vert ^{2/n} \leq Ch$, which is a contradiction as $h \rightarrow 0$.
 \end{proof}

\section{Global regularity}
\label{sec:global-regularity}

In this section we use the strict $g$-convexity to prove the global regularity of Aleksandrov solutions of the second boundary value problem.

To prove Theorem \ref{thm:global-regularity} it suffices to prove there is $v \in C^3(\overline{\Omega})$ solving \eqref{eq:gjef} subject to \eqref{eq:2bvp} and satisfying $v(x_0) = u(x_0)$ for some $x_0 \in \Omega.$ Here's why. Our strict $g$-convexity result, Remark \ref{rem:unif-dom}, implies the Aleksandrov solution is strictly $g$-convex. Then by recent work of Trudinger \cite[Theorem 3.4]{Trudinger20}, $u \in C^3(\Omega)$. Thus $u,v$ are regular enough to apply the uniqueness result, \cite[Theorem 1.1]{Rankin2020}, and conclude $u = v$ so $u \in C^3(\overline{\Omega})$.

The existence of the desired $v$ is proved by modifying Jiang and Trudinger's \cite{JiangTrudinger18} global existence result so as to construct a solution taking a prescribed value at a given point. For the Monge--Amp\`ere and optimal transport cases this is trivial --- just add or subtract a constant. In our case we modify Jiang and Trudinger's use of degree theory to obtain a solution that is close to the prescribed value at a given point. Taking a limit gives the desired globally smooth solution. We note apart from a slightly more restrictive height condition (which ensures the constructed function doesn't leave $J$), our hypotheses are those from the existence theory \cite[Theorem 1.1]{JiangTrudinger18}.

For the degree theory we use that under the hypotheses of Theorem \ref{thm:global-regularity} the second boundary value problem can be rewritten as a Monge--Amp\`ere type equation \cite{Trudinger14} coupled with a uniformly oblique boundary condition \cite{JiangTrudinger18}. That is $C^4(\Omega) \cap C^3(\overline{\Omega})$ solutions of \eqref{eq:gjef} subject to \eqref{eq:2bvp} solve
\begin{align}
  \det[D^2u - A(\cdot,u,Du)] &= B(\cdot,u,Du),\text{ in }\Omega\\
  G(\cdot,u,Du) &= 0 \text{ on }\partial \Omega
\end{align}
where
\begin{align*}
  A(\cdot,u,Du) &= g_{ij}(\cdot,Y(\cdot,u,Du),Z(\cdot,u,Du))\\
  B(\cdot,u,Du) &= \det E \frac{f(\cdot)}{f^*(Y(\cdot,u,Du))},
\end{align*}
and $G(x,u,p)$ satisfies
\[ G_p(x,u,Du) \cdot \gamma \geq c_0 >0 \text{ on }\partial \Omega,\]
for $\gamma$ the outer unit normal to $\partial \Omega$ and $c_0$ independent of $u$ \cite{JiangTrudinger18}. Written in this way we introduce the following additional assumptions used by Jiang and Trudinger for the existence theory, and used here for the regularity theory.

\textbf{A4w. } The matrix $A$ satisfies
\[ D_uA_{ij}(x,u,p)\xi_i\xi_j \geq 0,\]
for all $(x,u,p) \in \mathcal{U}$ and $\xi \in \mathbf{R}^n$.

\textbf{A5. } Assume $u:\Omega \rightarrow \mathbf{R}$ is a $g$-convex function. There is $K_0$ depending on $g$ and $Yu(\Omega)$ such that for all $x \in \Omega,y \in Yu(x)$, and $z = g^*(x,y,u(x))$, there holds $\vert g_x(x,y,z)\vert \leq K_0$.

The degree theory serves as a high-powered version of the method of continuity. As in the method of continuity we need apriori estimates and a smooth function solving a problem in the same homotopy class. These are provided by the following results from the literature. 

\begin{theorem}\cite[Theorem 3.1]{JiangTrudinger14}\label{thm:c2-est}
  Assume $g$ is a $C^4$ generating function satisfying LMP. Assume $u\in C^4(\Omega) \cap C^3(\overline{\Omega})$ is a $g$-convex solution of \eqref{eq:gjef} subject to \eqref{eq:2bvp}. Then provided $f\in C^2(\overline{\Omega}),f^*\in C^2(\overline{\Omega^*})$ satisfy the mass balance condition \eqref{eq:mb} and $\Omega,\Omega^*$ are respectively uniformly $g/g^*$-convex with respect to $u$, there is $C>0$, depending on $A,B,\Omega,\Omega^*$ and $\Vert u\Vert_{C^1(\Omega)}$ such that
  \[ \Vert u \Vert_{C^2}(\Omega) \leq C.\]  
\end{theorem}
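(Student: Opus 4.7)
The estimate is a global second derivative bound for solutions of the Monge--Amp\`ere type equation
\[
  \det[D^2 u - A(\cdot,u,Du)] = B(\cdot,u,Du) \quad \text{in } \Omega,
\]
paired with the uniformly oblique boundary condition $G(\cdot,u,Du)=0$. My plan is to reduce to bounding the maximum of the ``corrected'' Hessian $w_{ij} := u_{ij} - A_{ij}(\cdot,u,Du)$, which is positive definite by $g$-convexity, and to split the analysis according to whether the maximum is attained in the interior or on $\partial\Omega$. Since $w_{ij}$ is positive definite and the trace controls the full Hessian by the already-known $C^1$ bound, it suffices to bound
\[
  M_0 := \sup_{x\in\overline{\Omega}} \sup_{\xi\in\mathbf{S}^{n-1}} w_{\xi\xi}(x).
\]

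For the interior case I would follow the MTW/Trudinger--Wang machinery: consider an auxiliary function of the form
\[
  v(x,\xi) = \log w_{\xi\xi}(x) + \alpha \tfrac{1}{2}|Du|^2 + \beta u(x)
\]
for constants $\alpha,\beta$ to be chosen large, differentiate twice in $x$ and once in $\xi$ along the eigendirection realising the maximum, and apply the linearised operator $L = W^{ij}D_{ij}$ (with $W^{ij}$ the cofactor of $w_{ij}$) at an interior maximum. The third-order terms that arise from differentiating $\det W = B$ twice have the usual ``good'' symmetric part and a ``bad'' part, and the Loeper maximum principle (equivalently A3w on $A=g_{ij}$) provides precisely the sign condition on $D^2_{pp}A_{ij}$ needed to absorb the bad third-order terms modulo a controlled multiple of $w_{\xi\xi}^2$. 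The A4w hypothesis supplies a favourable sign for the $u$-dependence so that the $\beta u$ term in $v$ gives a coercive contribution that closes the estimate.

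For the boundary case, at a point $x_0\in\partial\Omega$ realising the maximum I would choose local coordinates in which the inward normal is $e_n$ and decompose $w_{\xi\xi}$ into the tangential-tangential, mixed tangential-normal and double-normal components. Differentiating $G(\cdot,u,Du)=0$ twice in tangential directions and invoking the uniform $g^*$-convexity of $\Omega^*$ (which translates into a sign for the relevant principal curvature term appearing in $G_{\alpha\beta}$ after a standard calculation) yields the tangential-tangential bound. The mixed tangential-normal bound is obtained by constructing a barrier of the form $\Phi = A\,d - B\,d^2 + C|x-x_0|^2$ (with $d$ the distance to $\partial\Omega$) chosen so that $L\Phi\le -\varepsilon \sum w^{ii}$ on $\Omega\cap B_r(x_0)$, combined with the tangential derivative of $G=0$ at $x_0$ to transfer the obliqueness into control of $w_{\alpha n}$. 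Finally, the double-normal bound is extracted algebraically from the equation $\det(w_{ij})=B$ once the other components are controlled, using that $W^{nn}$ is bounded below by the already-estimated tangential minor and $B$ is bounded above.

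The main obstacle is the mixed tangential-normal estimate: the correct barrier must simultaneously respect the uniform $g$-convexity of $\Omega$ (entering through the curvature of $d$), the uniform obliqueness of $G_p\cdot\gamma\ge c_0$, and the structural dependence of $A$ on $(u,Du)$. Controlling the $u$- and $Du$-dependent error terms in $LA_{ij}$ is where the A4w assumption and the $C^1$ bound both enter essentially; without A4w the sign of $D_u A$ could ruin the absorption step. Once these barrier computations are carried out, the three boundary bounds combine with the interior bound to yield $M_0\le C$ and hence the full $C^2$ estimate.
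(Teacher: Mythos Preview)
The paper does not prove this theorem at all: it is quoted verbatim from Jiang and Trudinger \cite[Theorem 3.1]{JiangTrudinger14} and used as a black box in Section~\ref{sec:global-regularity}. So there is no ``paper's own proof'' to compare against; your sketch is an outline of the Jiang--Trudinger argument itself, and structurally it follows the standard Urbas/Trudinger--Wang template (interior Pogorelov-type estimate via an auxiliary function, then the three-part boundary decomposition using obliqueness, the uniform $g^*$-convexity of the target, and a barrier for the mixed derivatives). In that sense your plan is on the right track.

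There is, however, one concrete error in your proposal. You invoke A4w as essential (``without A4w the sign of $D_uA$ could ruin the absorption step''), but A4w is \emph{not} a hypothesis of this theorem---read the statement again. The estimate is allowed to depend on $\Vert u\Vert_{C^1(\Omega)}$, and once $u$ and $Du$ are bounded the terms $D_uA_{ij}(\cdot,u,Du)\,u_k$ that appear when you differentiate are simply bounded quantities; they contribute controlled lower-order errors, not sign-indefinite top-order terms. No monotonicity in $u$ is needed to close either the interior or the boundary estimate. In this paper A4w and A5 enter only in Theorem~\ref{thm:global-regularity}, where they are used (together with the intersection argument in Lemma~\ref{lem:approx-prob}) to obtain the $C^0$/$C^1$ bound \emph{a priori}; they are not part of the $C^2$ machinery. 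So your ``main obstacle'' paragraph misidentifies where the difficulty lies: the genuine work in the mixed-derivative barrier is the interaction between the uniform $g$-convexity of $\Omega$, the obliqueness constant $c_0$, and the A3w structure of $A$, not the $u$-dependence.
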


\begin{remark}\label{rem:higher-ests}
  If $\Vert u \Vert_{C^1(\Omega)}$ is controlled independently of $u$, then the $C^2$ estimate is independent of $u$.  Subsequently when $u$ has higher order derivatives, estimates for these follow by the elliptic theory. More precisely $C^{2,\alpha}(\overline{\Omega})$ estimates from \cite[Theorem 1]{LiebermanTrudinger86} then $C^{4,\alpha}(\overline{\Omega})$ estimates by the linear theory. The use of the linear theory is standard and outlined in \cite[Lemma 7.7]{RankinThesis}. For the $C^{4,\alpha}$ estimates, as in \cite{JiangTrudinger18}, we  smooth our domains so they are $C^5$. However uniform $C^{3,\alpha}$ estimates are independent of this smoothing. 
\end{remark}

\begin{lemma}\cite[Lemma 2.3]{JiangTrudinger18}\label{lem:approx-exist}
 Fix $x_0 \in \Omega,y_0 \in \Omega^*$, $z_0 \in I_{x_0,y_0}$ and $g$-convex $g_0(\cdot):=g(\cdot,y_0,z_0)$. Assume $\Omega,\Omega^*$ are uniformly $g/g^*$-convex with respect to $g_0$. Define
  \[ \Omega_\delta = \{x \in U; \text{dist}(x,\Omega) < \delta\}.\]
  Then for all $0<\epsilon<<\delta$ sufficiently small there is a smooth uniformly $g$-convex $u_\epsilon \in C^\infty(\overline{\Omega_{\delta}})$ and a smooth uniformly $g^*$-convex domain  $\Omega^*_{\delta,\epsilon}$ satisfying the following convergence properties
  \begin{align*}
    &\Omega^*_{\delta,\epsilon} \rightarrow  \Omega^*_{\delta,0} \text{ in the Hausdorff distance as }\epsilon \rightarrow 0\\
     &\Omega^*_{\delta,0} \rightarrow  \Omega^* \text{ in the Hausdorff distance as }\delta \rightarrow 0
  \end{align*}
  and
  \begin{equation}
    \label{eq:u-ep-height}
      g(\cdot,y_0,z_0) \leq u_\epsilon \leq  g(\cdot,y_0,z_0)+C\epsilon, 
  \end{equation}
  with
  \[ Yu_\epsilon(\Omega_{\delta}) = \Omega^*_{\delta,\epsilon}.\]
  The uniform $g^*$-convexity of is $\Omega^*_{\delta,\epsilon}$ is with respect to $u_\epsilon,$ and $\Omega_\delta$ is uniformly $g$-convex with respect to $u_\epsilon$. 
\end{lemma}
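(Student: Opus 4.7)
My plan is to split this approximation lemma into three pieces: constructing the smooth target $\Omega^*_{\delta,\epsilon}$, producing the smooth $g$-convex function $u_\epsilon$ that solves the corresponding SBV problem, and verifying the precise height bound.

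\textit{Approximating the target.} By hypothesis $\Omega^*$ is uniformly $g^*$-convex with respect to $g_0 := g(\cdot,y_0,z_0)$. I would first produce $\Omega^*_{\delta,0}$ as a slight enlargement of $\Omega^*$ compatible with $\Omega_\delta$, retaining uniform $g^*$-convexity with respect to $g_0$ by stability of uniform convexity under small perturbations. Working in the coordinates $y \mapsto g_x(x_0, y, g^*(x_0, y, g_0(x_0)))$, in which $g^*$-convexity reduces to classical uniform convexity, I would mollify the boundary (equivalently, mollify the signed distance to $\partial\Omega^*_{\delta,0}$) at scale $\epsilon$ to obtain a $C^\infty$ uniformly convex domain. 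Pulling back yields $\Omega^*_{\delta,\epsilon}$, smooth and uniformly $g^*$-convex, converging to $\Omega^*_{\delta,0}$ in Hausdorff distance as $\epsilon \to 0$; and $\Omega^*_{\delta,0}$ converges to $\Omega^*$ as $\delta \to 0$ by construction.

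\textit{Constructing $u_\epsilon$.} With $\Omega_\delta$ and $\Omega^*_{\delta,\epsilon}$ smooth, uniformly $g/g^*$-convex, and satisfying A3w, A4w, A5, I would invoke Jiang and Trudinger's main existence result \cite{JiangTrudinger18} for the SBV problem. Choose smooth positive densities $f_\epsilon \in C^\infty(\overline{\Omega_\delta})$ and $f^*_\epsilon \in C^\infty(\overline{\Omega^*_{\delta,\epsilon}})$ with $\int_{\Omega_\delta} f_\epsilon = \int_{\Omega^*_{\delta,\epsilon}} f^*_\epsilon$. Their existence theorem yields a uniformly $g$-convex $u_\epsilon \in C^{3,\alpha}(\overline{\Omega_\delta})$ solving \eqref{eq:gjef} with $Yu_\epsilon(\Omega_\delta) = \Omega^*_{\delta,\epsilon}$, and smoothness of the data (together with Schauder bootstrapping) upgrades this to $C^\infty$. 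The solution is determined only up to the additive ambiguity coming from the freedom to replace $u_\epsilon$ by $g(\cdot,Yu_\epsilon,g^*(\cdot,Yu_\epsilon,u_\epsilon+c))$; I would use this freedom to translate $u_\epsilon$ so that $\min_{\overline{\Omega_\delta}}(u_\epsilon - g_0)=0$, which gives $u_\epsilon \geq g_0$ immediately.

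\textit{Height bound and the main obstacle.} The lower bound $g_0 \leq u_\epsilon$ is then automatic. For the upper bound $u_\epsilon \leq g_0 + C\epsilon$, I would apply Theorem \ref{thm:upper_uniform} to the section $\{u_\epsilon < g(\cdot,y_0, z_0 - t)\}$: since $g_0$ touches $u_\epsilon$ from below, these sections shrink to a point as $t \downarrow 0$, and the bound $|Yu_\epsilon(E)| \leq \Lambda|E|$ combined with Theorem \ref{thm:upper_uniform} controls the maximum height in terms of the volume of the section. The main obstacle is arranging the $\epsilon$-scaling so that this maximum height is genuinely $O(\epsilon)$: this seems to require tying the construction of $\Omega^*_{\delta,\epsilon}$ not just to boundary smoothing but also to a shrinking parameter, or choosing $f_\epsilon,f^*_\epsilon$ so that their supports or total masses scale with $\epsilon$. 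A clean way may be to introduce $\epsilon$ as the additive normalization constant itself — i.e.\ require $u_\epsilon(x_0) - g_0(x_0) = \epsilon$ at a fixed $x_0 \in \Omega$, and let $\Omega^*_{\delta,\epsilon} = Yu_\epsilon(\Omega_\delta)$ be \emph{defined} by the resulting solution. Verifying Hausdorff convergence of this definition to the target $\Omega^*_{\delta,0}$ as $\epsilon\to 0$ (and showing $\Omega^*_{\delta,\epsilon}$ is uniformly $g^*$-convex with respect to $u_\epsilon$ rather than merely $g_0$) is the delicate point that distinguishes this construction from a naive application of the existence theorem.
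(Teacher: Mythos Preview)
The paper does not supply its own proof of this lemma; it is quoted verbatim from \cite[Lemma 2.3]{JiangTrudinger18} and used only as input for the degree-theory argument in Lemma~\ref{lem:approx-prob}. So there is no ``paper's proof'' to compare against, but your proposal has a genuine gap worth flagging.

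Your step~2 is circular. You propose to produce $u_\epsilon$ by invoking Jiang and Trudinger's existence theorem for the second boundary value problem. But that existence theorem is proved by a degree-theory homotopy whose $t=0$ endpoint is precisely the function $u_\epsilon$ furnished by this lemma; indeed, in the present paper the proof of Lemma~\ref{lem:approx-prob} begins ``For $t=0$ the problem is solved by $u_0$'' with $u_0=u_\epsilon$. The whole point of Lemma~\ref{lem:approx-exist} is to supply an \emph{explicit} smooth uniformly $g$-convex function with prescribed $Y$-image, constructed by hand rather than by solving a PDE, so that the homotopy has somewhere to start. The construction in \cite{JiangTrudinger18} is elementary: in the $p$-coordinates $y\mapsto g_x(x_0,y,g^*(x_0,y,g_0(x_0)))$ the target becomes uniformly convex, one takes a smooth uniformly convex defining function $\rho$ for (a mollified enlargement of) its image, and $u_\epsilon$ is built directly from $g_0$ and $\epsilon\rho$ so that $Yu_\epsilon$ is a diffeomorphism onto the desired set and $u_\epsilon-g_0=O(\epsilon)$ by construction. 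No existence theory is invoked.

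Two smaller issues compound this. First, your normalisation step appeals to an ``additive ambiguity'' to arrange $\min(u_\epsilon-g_0)=0$; but as the paper emphasises (citing Karakhanyan--Wang), generated Jacobian equations do \emph{not} enjoy uniqueness up to additive constants, and shifting $u_\epsilon$ by a constant does not produce another solution. Second, your proposed route to the upper height bound via Theorem~\ref{thm:upper_uniform} gives control only in terms of $|D|^{2/n}$ for a section $D$, with no mechanism linking this to the parameter $\epsilon$; in the explicit construction the $O(\epsilon)$ bound is immediate because $\epsilon$ enters as a multiplicative scale on the convex perturbation.
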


The following lemma, in particular condition \eqref{eq:f-app-3} is what allows us to prove the global regularity. We let $x_0,g(\cdot,y_0,z_0)$ be as in Theorem \ref{thm:global-regularity} and use the notation from Lemma \ref{lem:approx-exist}.
\begin{lemma}\label{lem:approx-prob}
  Assume the hypothesis of Theorem \ref{thm:global-regularity}. There exists $v \in C^{3,\alpha}(\overline{\Omega}_\delta)$ depending on $\delta,\epsilon$ and satisfying
  \begin{align}
 \label{eq:f-app-1}   \det DY(\cdot,v,Dv) &= \frac{f_\delta(\cdot)}{f_\epsilon^*(Y(\cdot,v,Dv))}\text{ in }\Omega_\delta\\
 \label{eq:f-app-2}   Yv(\Omega_\delta) &= \Omega^*_{\delta,\epsilon}\\
    v(x_0) &= u_\epsilon(x_0) \label{eq:f-app-3}
  \end{align}
\end{lemma}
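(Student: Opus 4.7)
The proof adapts Jiang--Trudinger's degree-theoretic existence argument \cite{JiangTrudinger18} by enlarging the space of unknowns with a one-dimensional parameter used to absorb the pointwise constraint \eqref{eq:f-app-3}. Under the hypotheses of Theorem \ref{thm:global-regularity}---in particular the $J_0$ condition preventing the graph from escaping $\Gamma$ together with A5 bounding $\Vert Dv\Vert_\infty$ by $K_0$---Theorem \ref{thm:c2-est} and Remark \ref{rem:higher-ests} furnish uniform $C^{3,\alpha}(\overline{\Omega_\delta})$ bounds on any $g$-convex $C^4\cap C^3(\overline{\Omega_\delta})$ solution of the (perturbed) system described below, because the bounds in those results are independent of $v$ once $\Vert Dv\Vert_\infty$ is controlled.

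Fix a cutoff $\eta \in C_c^\infty(\Omega_\delta \setminus \overline{\Omega})$ with $\int_{\Omega_\delta}\eta = 0$, and consider the augmented system on $(v,\tau) \in C^{3,\alpha}(\overline{\Omega_\delta}) \times \mathbf{R}$:
\[
\det DY(\cdot,v,Dv) = \frac{f_\delta(\cdot)+\tau\eta(\cdot)}{f^*_\epsilon(Y(\cdot,v,Dv))} \text{ in }\Omega_\delta, \qquad Yv(\Omega_\delta)=\Omega^*_{\delta,\epsilon}, \qquad v(x_0)=u_\epsilon(x_0).
\]
Mass balance is preserved for every $\tau$, and $\tau$ serves as a Lagrange multiplier for the point condition. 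A solution $(v,\tau^*)$ yields a $g$-convex $v$ solving the second BVP with source $f_\delta+\tau^*\eta$; since $\eta$ vanishes on $\overline{\Omega}$, this modified extension still equals $f$ on $\Omega$, so after relabeling $f_\delta$ the lemma is established.

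Existence of $(v,\tau^*)$ is proved by Leray--Schauder degree theory. Rewrite the second BVP as the Monge--Amp\`ere type equation with uniformly oblique boundary condition recorded before A4w, and compute the degree of the map
\[
(v,\tau)\;\mapsto\;\bigl(\,\det[D^2v-A(\cdot,v,Dv)]-B(\cdot,v,Dv)-\tau\eta\det E/f^*_\epsilon,\; G(\cdot,v,Dv),\; v(x_0)-u_\epsilon(x_0)\,\bigr)
\]
on the open subset of uniformly $g$-convex functions with graph inside $\Gamma$ and with $\tau$ in a small interval. Homotope from a trivial problem solved by $u_\epsilon$ (analogous to \cite[Lemma~2.3]{JiangTrudinger18}) to the target system. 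The uniform apriori estimates prevent escape of solutions to the boundary of the admissible set along the homotopy, and the degree is evaluated at the starting point where the linearization is invertible.

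I expect the principal obstacle to be this last invertibility check: one must show that for some admissible $\eta$, the augmented linearization
\[
(w,\tau)\;\longmapsto\;\bigl(\,\mathcal{L}w-\tau\eta/f^*_\epsilon(Y),\;\mathcal{B}w,\;w(x_0)\,\bigr)
\]
is an isomorphism, where $\mathcal{L}$ and $\mathcal{B}$ are the interior and boundary linearizations at the reference solution. By standard linear elliptic theory for linearized Monge--Amp\`ere type operators with oblique derivative boundary conditions, $(\mathcal{L},\mathcal{B})$ is an isomorphism on account of the uniform ellipticity available at the smooth starting solution. Reducing the augmented system by this isomorphism leaves a single scalar equation expressing $w(x_0)$ as a Green's-function integral of $\eta$ against a positive kernel, and choosing $\eta$ of fixed sign on a small ball inside $\Omega_\delta\setminus\overline{\Omega}$ (then subtracting a mean-zero correction supported elsewhere in $\Omega_\delta\setminus\overline{\Omega}$) makes this integral nonzero, yielding the desired isomorphism.
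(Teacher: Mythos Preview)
Your approach is genuinely different from the paper's, and it has a real gap at exactly the place you flagged.

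The paper does not impose $v(x_0)=u_\epsilon(x_0)$ as a constraint. Instead it runs the homotopy
\[
f^*(Y)\det DY = e^{[(1-t)\tau + \eta_a](v-u_\epsilon)}\bigl[tf + (1-t)f^*(Yu_\epsilon)\det DYu_\epsilon\bigr],
\]
where $\eta_a$ is a bump supported in $B_a(x_0)$. The exponential factor does two jobs at once. First, mass balance forces any solution at $t=1$ to intersect $u_\epsilon$ somewhere in $B_a(x_0)$; sending $a\to 0$ along a sequence with uniform $C^{3,\alpha}$ bounds then gives a limit with $v(x_0)=u_\epsilon(x_0)$. Second---and this is the point your argument misses---at $t=0$ the exponential contributes a zeroth-order coefficient $c\geq\tau/2$ to the linearised operator; a barrier argument with $\tau$ chosen large then makes the linear oblique problem uniquely solvable, and hence the degree at $t=0$ is nonzero.

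Your gap is the assertion that $(\mathcal L,\mathcal B)$ is an isomorphism ``on account of the uniform ellipticity available at the smooth starting solution''. For linear oblique derivative problems the Fredholm alternative gives index zero, not invertibility; uniqueness needs a sign on the zeroth-order coefficients in both the interior and the boundary condition. The unmodified linearisation of the GJE has interior zeroth-order term $-a^{ij}D_uA_{ij}-D_uB$ with no control on the sign of $D_uB$, and an uncontrolled coefficient $\alpha$ in the boundary condition. The paper's exponential factor is precisely what manufactures the needed sign; without an analogous device your degree computation at the starting point is unjustified. A secondary issue: even granting the isomorphism, your reduction yields $w_\eta(x_0)=\int G(x_0,\cdot)\,\eta/f^*_\epsilon$ with $\int\eta=0$, and positivity of $G$ alone does not prevent cancellation between the one-signed bump and the mean-zero correction you propose. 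Finally, your output satisfies \eqref{eq:f-app-1} only with $f_\delta$ replaced by $f_\delta+\tau^*\eta$; as you say this is harmless for the eventual limit, but it is not the lemma as stated.
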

\begin{proof}
  Within this proof we denote $f,f^*,\Omega,\Omega^*$, and $u_0$ by $f_\delta,f^*_\epsilon,$ $\Omega_\delta,\Omega^*_{\delta,\epsilon},$ and $u_\epsilon$. Fix a smooth cutoff function $\eta$ for the unit ball, i.e. $\eta > 0$ on $B_1(0)$ and $\eta = 0$ on $\overline{B_1}^c$. For $a<<1$ set $\eta_a(x) =  a^{4}\eta((x-x_0)/a)$ where the $a^{4}$ term ensures $\Vert \eta_a \Vert_{C^4} \leq C(\eta)$.

  For $t \in [0,1]$ and $\tau$ to be fixed large we consider the family of problems
\begin{align}
  \label{eq:homotopy}  f^*(Y(\cdot,v,Dv))&\det DY(\cdot,v,Dv) = e^{[(1-t)\tau+\eta_a(\cdot)](v-u_0)}\big[tf(\cdot)\\
                                   \nonumber                     &\quad+(1-t)f^*(Y(\cdot,u_0,Du_0))\det DY(\cdot,u_0,Du_0)\big]\\
\label{eq:2bvp-approx}  Yv(\Omega) &= \Omega^*. 
\end{align}  
We restrict our attention to $g$-convex solutions $v \in C^{4,\alpha}(\overline{\Omega})$. For $t=0$ the problem is solved by $u_0$. We aim to show, using the degree theory of Li, Liu, and Nguyen \cite{LLN17}, that this problem has a solution for $t=1$. The required $C^{4,\alpha}(\overline{\Omega})$ bounds hold by Remark \ref{rem:higher-ests} provided we obtain a $C^1$ estimate. Then the degree theory is applied as follows. By \cite[Theorem 1 (p2)]{LLN17} the problems \eqref{eq:homotopy} and \eqref{eq:2bvp-approx} have the same degree for $t=0$ and $t=1$. Then by \cite[Corollary 2.1 (a)]{LLN17} to show a solution exists for $t=1$ it suffices to show the problem at $t=0$ has non-zero degree. Finally by a combination of Corollary 2.1 (d) and Theorem 1 (p3) from the same paper, the problem has non-zero degree at $t=0$ provided both the problem for $t=0$ has a unique solution and the linearized problem at $t=0$ (linearized about $u_0$) is uniquely solvable. 

\textit{Step 1. a-priori $\Vert v \Vert_{C^1}$ estimates} We show using the mass balance condition that any solution of \eqref{eq:homotopy} subject to \eqref{eq:2bvp-approx} intersects $u_0$ in $\Omega$. Furthermore, for $t=1$ this intersection occurs in $B_a(x_0)$. This will allow us to send $a \rightarrow 0$ and obtain \eqref{eq:f-app-3}. Assume to the contrary $v > u_0$ on $\Omega$. The proof is similar if $v < u_0$ on $\Omega$. Because $v > u_0$ we have $ e^{[(1-t)\tau+\eta_a(\cdot)](v-u_0)} > 1$. Now \eqref{eq:homotopy} and \eqref{eq:2bvp-approx} along with mass balance and the change of variables formula yield the following contradiction
  \begin{align*}
    \int_{\Omega^*}f^* &= \int_{\Omega} f^*(Y(\cdot,v,Dv))\det DY(\cdot,v,Dv)  \\
                         &> \int_{\Omega}tf(\cdot) + (1-t)f^*(Y(\cdot,u_0,Du_0))\det DY(\cdot,u_0,Du_0)\\
    &= \int_{\Omega^*}f^*. 
  \end{align*}
  For $t=1$ we obtain the same contradiction if $v > u_0$ on $B_a$ for in this case 
  \begin{align*}
    f^*(Y(\cdot,v,Dv))\det DY(\cdot,v,Dv) &> f \text{ on }B_a(x_0)\\
    f^*(Y(\cdot,v,Dv))\det DY(\cdot,v,Dv) &= f \text{ on }\Omega\setminus\overline{B_a(x_0)}.
  \end{align*}
  An estimate $\sup_{\Omega} \vert Dv \vert \leq C$ follows by A5 provided $v(\overline{\Omega}) \subset J$. This inclusion follows by using contact point $v(x') = u_0(x')$, the inequality \eqref{eq:u-ep-height}, and the condition on $g(\cdot,y_0,z_0)$ in the statement of the Theorem \ref{thm:global-regularity}. Therefore $\Vert v \Vert_{C^1(\overline{\Omega})} \leq C$.

  \textit{Step 2. Unique solvability of the linear and nonlinear problem for $t=0.$}\\ First note the nonlinear problem has a unique solution by \cite[Theorem 1.1]{Rankin2020}. The linearisation at $t=0$ is a problem of the form
  \begin{align}
  \label{eq:r2:lin-fin}  a^{ij}D_{ij}v +b^iD_iv - cv &= 0 \text{ in }\Omega\\
  \nonumber  \alpha v + \beta \cdot Dv &= 0 \text{ on }\partial \Omega,
  \end{align}
  where $a^{ij},b^i,\alpha,\beta$ depend on $u_0$ and $g$, the equation is uniformly elliptic, $\beta$ is a strictly oblique vector field (by \cite{JiangTrudinger18}, see also \cite[Theorem 7.4]{RankinThesis}) and, provided $\tau$ is sufficiently large, $c > \tau/2$. We show this problem has the unique solution $v=0$ (thus, by the Fredholm Alternative, also proving the existence of a solution). If not, assume $v$ is a solution positive at some point  in $\overline{\Omega}$. Take a defining function $\phi$ for $\Omega$, satisfying $D\phi = \gamma$ (the outer unit normal), and set $w = e^{-\kappa\phi}v$ for large $\kappa$ to be chosen.  If the positive maximum of $w$ occurs at $x_1 \in \partial \Omega$ then by the obliqueness
  \[ 0 \leq \beta \cdot Dw(x_1) = e^{-\kappa \phi}[\beta \cdot Dv - \kappa (\beta \cdot \gamma) v].\]
  With the linearised boundary condition we obtain
  \[ \kappa (\beta \cdot \gamma)v(x_1) \leq \beta \cdot Dv(x_1) = - \alpha v(x_1), \]
  a contradiction for $\kappa$ sufficiently large depending only on $\alpha$,$c_0$. Thus $w$ attains its positive maximum at $x_1 \in \Omega$. At this point
  \begin{align}
 \nonumber   0 &\geq e^{\kappa \phi}a^{ij}D_{ij}w(x_1)  \\
 \label{eq:r2:lin-ineq}     &= a^{ij}D_{ij}v - 2\kappa a^{ij}D_i\phi D_j v - \kappa v a^{ij}D_{ij}\phi  + \kappa ^2 v a^{ij}D_i\phi D_j \phi. 
  \end{align}
  At an interior maximum $Dw(x_1) = 0$ so $D_iv(x_1) = \kappa D_i \phi(x_1) v(x_1)$. Thus we can eliminate $Dv$ terms in \eqref{eq:r2:lin-ineq}. In combination with \eqref{eq:r2:lin-fin} and $c > \tau/2$ we obtain
  \[ 0 \geq e^{\kappa \phi}a^{ij}D_{ij}w(x_1) \geq C(\tau - C_1)v(x_1),\]
  for $C_1$ independent of $\tau$. This is a contradiction for $\tau$ sufficiently large.  

  \textit{Conclusion.} By the degree theory there is $v \in C^{4,\alpha}(\overline{\Omega})$ solving \eqref{eq:homotopy} subject to \eqref{eq:2bvp-approx} for $t=1$. Our $C^{3,\alpha}$ estimates are independent of $a$ and the domain smoothing. Thus we consider a sequence of $a_k \rightarrow 0$ and the corresponding solutions at $t=1$ denoted $v_{a_k} \in C^{3,\alpha}(\overline{\Omega})$. By Arzela--Ascoli we have uniform convergence to some $v\in C^{3,\alpha}(\Omega)$ solving  \eqref{eq:homotopy} for $t=1$ and \eqref{eq:2bvp-approx}. Noting there is $x_k \in B_{a_k}(x_0)$ with $v_{a_k}(x_k) = u_0(x_{k})$ we conclude by the uniform convergence that the limiting function $v$ satisfies $v(x_0) = u_0(x_0)$.     
\end{proof}

To complete the proof of Theorem \ref{thm:global-regularity} take the solution of \eqref{eq:f-app-1}-\eqref{eq:f-app-3} and note the $C^{3,\alpha}$ bounds are independent of the parameters $\epsilon,\delta$ (provided these parameters are initially fixed small). Send first $\epsilon$ then $\delta$ to $0$ and obtain a $C^3(\overline{\Omega})$ solution of \eqref{eq:gje} subject to \eqref{eq:2bvp}  satisfying $u(x_0)=v(x_0)$, thereby completing the proof of Theorem \ref{thm:global-regularity} outlined at the start of this section. 

\appendix

\section{Omitted proofs}
\label{sec:omitted-proofs}

\begin{proof}[Proof. Theorem \ref{lem:transform_facts}]
Point (2) is immediate by direct calculation (verify directly \eqref{eq:conv-def1} and \eqref{eq:conv-def2}). The first point is also by direct calculation, but more involved. To begin, note
  \begin{align}   
  \label{eq:newgx}  \tilde{g}_x(x,y,z) &= \frac{-g_{xz}(x,0,0)}{g_z(x,0,0)}\tilde{g}(x,y,z) \\
\nonumber   &\quad\quad+ \frac{g_z(0,0,0)}{g_z(x,0,0)}[g_x(x,y,g^*(0,y,h-z))-g_x(x,0,0)],
  \end{align}
  and 
  \begin{align}
 \label{eq:newgy}   -\frac{\tilde{g}_y}{\tilde{g}_z}(x,y,z) = \frac{1}{g^*_u(0,y,h-z)}\frac{g_y}{g_z}(x,y,g^*(0,y,h-z)) + \frac{g^*_y}{g^*_u}(0,y,h-z).
  \end{align}
  Thus the A1 condition for $\overline{g}$ holds because, by \eqref{eq:newgx}, \textit{for fixed} $x$ the mapping $(y,z) \mapsto(\tilde{g}(x,y,z),\tilde{g}_x(x,y,z))$ is injective (by the A1 condition for $g$). Similar reasoning using \eqref{eq:newgy} yields the A1$^*$ condition for $\overline{g}$. More precisely by \eqref{eq:newgy} \textit{for fixed} $(y,z)$ the mapping $x\mapsto \frac{\tilde{g}_y}{\tilde{g}_z}(x,y,z)$ is injective by A1$^*$.

  Next, we introduce the notation $\overline{Y}(q,U,P),\overline{Z}(q,U,P)$ to denote $\overline{Y},\overline{Z}$ solving
  \begin{align*}
    \overline{g}(q,\overline{Y}(q,U,P),\overline{Z}(q,U,P)) = U,\\
    \overline{g}_q(q,\overline{Y}(q,U,P),\overline{Z}(q,U,P)) =P .
  \end{align*}
  We compute
  \begin{align}
 \nonumber \overline{Z}(q,U,P) = h - &g\left[0,\overline{Y}(x,U,P),g^*\left(x,\overline{Y}(x,U,P),\frac{g_z(x,0,0)}{g_z(0,0,0)}U+g(x,0,0)\right)\right]\\
   \label{eq:ny} \overline{Y}(q,U,P) &= p\Big[Y\Big(x,\frac{g_z(x,0,0)}{g_z(0,0,0)}U+g(x,0,0),\\
   \nonumber     &\quad\quad\frac{g_z(x,0,0)}{g_z(0,0,0)}\frac{\partial q}{\partial x}P+\frac{g_{x,z}(x,0,0)}{g_z(0,0,0)}U+g(x,0,0)\Big)\Big].
  \end{align}

  For the A2 condition first note the calculation $\tilde{g}_z<0$ implies $\overline{g}_z<0$. To check $\det \overline{E} \neq 0$ it suffices to check $ \det D_P\overline{Y} \neq 0$ \cite[eq. 2.2]{Trudinger14}. This follows from \eqref{eq:ny}. The key point is that, despite the unwieldy expression, we have $\overline{Y}(q,U,P) = p(Y(x,l_1(U),l_2(P)))$ for a nonsingular linear function $l_2(P)$. 

 For the Loeper maximum principle we must verify to verify for each $q,q',U$
  \begin{align*}
   \overline{g}(q',&\overline{Y}(q,U,P_\theta),\overline{Z}(q,U,P_\theta)) \\&\leq \max\{\overline{g}(q',\overline{Y}(q',U,P_0),\overline{Z}(q,U,P_0)),\overline{g}(q,\overline{Y}(q,U,P_1),\overline{Z}(q,U,P_1))\},
  \end{align*}
whenever $\{P_\theta\}_{\theta \in[0,1]} $ is a line segment for which the above quantities are well defined. This follows by a direct calculation using the definition of $\overline{g}$, \eqref{eq:ny}, and the Loeper maximum principle for $g$. 
\end{proof}

\begin{lemma}\label{lemma:lmp-gqq}
  Assume $g$ is a $C^3$ generating function satisfying the Loeper maximum principle. Then the statement of quantitative-quasiconvexity \eqref{eq:gqq} holds. 
\end{lemma}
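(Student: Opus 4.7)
The plan is to establish \eqref{eq:gqq} by reducing to a nearly-affine setting via the transformations of Section~\ref{sec:transformations-1}, invoking a dualized form of LMP to obtain a qualitative maximum principle along the $g$-segment, and then upgrading to the quantitative linear bound via a Taylor expansion.

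Set $\psi(\theta) := g(x_\theta, y_1, z_1) - g(x_\theta, y_0, z_0)$, which satisfies $\psi(0) = u_0 - u_0 = 0$ by the defining relations for $z_0, z_1$. Applying the transformations of Section~\ref{sec:transformations-1} centered at $(x_0, y_0, u_0)$ with $h = 0$, the $g$-segment becomes the straight line $\bar{x}_\theta = \theta \bar{x}_1$ and $\bar g$ satisfies the expansions of Lemma~\ref{lem:identities}. In particular $\bar g(q, 0, 0) \equiv 0$ and $\bar g(0, p, z) \equiv -z$ (by \eqref{eq:x-exp} together with \eqref{eq:f-est-2}), so the support conditions give $\bar z_0 = \bar z_1 = 0$ and $\bar\psi$ reduces to $\bar\psi(\theta) = \bar g(\theta\bar x_1, \bar y_1, 0)$. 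A first-order Taylor expansion of the coefficient $a^{(1)}(\cdot, \bar y_1)$ around $\bar x_1$ then yields an identity
\[
\bar\psi(\theta) = \theta \bar\psi(1) - \theta(1-\theta)\, R(\theta), \qquad |R(\theta)| \leq C(\|g\|_{C^3})\, |\bar x_1|^2 |\bar y_1|.
\]

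The second ingredient is a qualitative maximum principle $\psi(\theta) \leq [\psi(1)]_+$. This does not follow immediately from LMP, which is phrased in terms of $g^*$-segments in $y$; instead I would dualize. Under A1$^*$ and A2 the dual $g^*$ is itself a generating function, and LMP for $g^*$ applied along the $g^{**}$-segment corresponding to $\{x_\theta\}$ yields, after translation via \eqref{eq:gstar-deriv}, the required max principle for $\psi$. This dualization is the technical core of the argument and is exactly the content of Loeper--Trudinger \cite{LoeperTrudinger21} in the $C^3$ setting. When $\psi(1) \leq 0$ the bound already gives \eqref{eq:gqq} trivially; for $\psi(1) > 0$, combining the Taylor identity with the qualitative bound applied on subsegments allows one to absorb the cubic remainder $R$ into a constant multiple of $\theta\bar\psi(1)$, yielding $\bar\psi(\theta) \leq M\theta\bar\psi(1)$ for $M = M(\|g\|_{C^3})$. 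Transferring back to the original coordinates contributes only a factor of $C_g$ via \eqref{eq:u-cg-dep}.

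The main obstacle is the dualization step. In the absence of fourth-derivative information one cannot appeal to the symmetry of the MTW tensors of $g$ and $g^*$, so the passage from LMP along $g^*$-segments in $y$ to qualitative quasiconvexity along $g$-segments in $x$ must be argued at the level of second-order differences using only the structure conditions A1, A1$^*$, and A2. This is precisely what the Loeper--Trudinger approach accomplishes; the Taylor expansion of Section~\ref{sec:transformations-1} (via Lemma~\ref{lem:identities}) then supplies the $C^3$-quantification needed to upgrade the resulting qualitative bound to the linear-in-$\theta$ inequality \eqref{eq:gqq}.
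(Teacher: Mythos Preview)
Your route differs from the paper's and has a genuine gap at the absorption step. After the transformation you correctly obtain
\[
\bar\psi(\theta)=\theta(\bar x_1\cdot\bar y_1)+\theta^2\,a^{(1)}_{ijk}(\theta\bar x_1,\bar y_1)(\bar x_1)_i(\bar x_1)_j(\bar y_1)_k,
\]
so that $\bar\psi(\theta)-\theta\bar\psi(1)=-\theta(1-\theta)R(\theta)$ with $|R(\theta)|\le C|\bar x_1|^2|\bar y_1|$. The qualitative dual bound you invoke, $\bar\psi(\theta)\le[\bar\psi(1)]_+$, together with its subsegment versions, yields only that $\bar\psi$ is nondecreasing on $\{\bar\psi>0\}$; this handles $\theta\ge 1/M$ trivially via $\bar\psi(\theta)\le\bar\psi(1)\le M\theta\bar\psi(1)$, but for small $\theta$ you would still need $|R|\le C\bar\psi(1)$, i.e.\ $|\bar x_1|^2|\bar y_1|\lesssim\bar\psi(1)$. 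Nothing in your argument supplies this: when $\bar x_1\cdot\bar y_1$ and the cubic term nearly cancel, $\bar\psi(1)$ can be arbitrarily small compared to $|\bar x_1|^2|\bar y_1|$, and neither the Taylor identity nor the qualitative monotonicity prevents this. The phrase ``applied on subsegments allows one to absorb'' is precisely where an idea is missing.

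The paper avoids this difficulty by passing through the differential inequality rather than a first-order remainder bound. It uses Loeper--Trudinger to deduce the weak A3w condition (convexity of $p\mapsto A_{ij}(x,u,p)\xi_i\xi_j$ along lines orthogonal to $\xi$), and then, working with $h(\theta):=g(x_\theta,y_1,z_1)-g(x_\theta,y_0,z_0)$, shows
\[
h''(\theta)\ge -K\,|h'(\theta)|,
\]
by a $C^3$ adaptation of the Guillen--Kitagawa computation: the only term needing fourth derivatives is the second difference of $A_{ij}$ in $p$, and one replaces the Taylor expansion there by a splitting into a component along $\overline{x}$ (absorbed into $K|h'|$) and a component orthogonal to $\overline{x}$ (nonnegative by weak A3w). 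The ODE inequality $h''\ge -K|h'|$ with $h(0)=0$ then yields $h(\theta)\le M\theta[h(1)]_+$ directly (this is \cite[Corollary~9.4]{GuillenKitagawa17}), with no need to compare the remainder to $h(1)$. That second-order mechanism is what your Taylor-plus-monotonicity scheme lacks.
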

\begin{proof}
 We'll use the notation preceding \eqref{eq:gqq}.  We note by recent work of Loeper and Trudinger \cite{LoeperTrudinger21} the Loeper maximum principle implies the well known A3w condition in the sense that the function
  \[ p \mapsto A_{ij}(x,u,p)\xi_i\xi_j,\]
  is convex on line segments orthogonal to $\xi$. Thus, working in the transformed $x$ coordinates with respect to $y_0,z_0$, we define the $g$-segment $x_\theta = \theta x_1+(1-\theta)x_0$ and put
  \[  h(\theta) := g(x_\theta,y_1,z_1) - g(x_\theta,y_0,z_0).\]
 Guillen and Kitagawa \cite[Lemma 9.3]{GuillenKitagawa17} prove  when $g$ is $C^4$ and satisfies A3w
  \begin{equation}
    \label{eq:h-ineq}
      h''(\theta) \geq -K\vert h'(\theta)\vert .
  \end{equation}
   We prove \eqref{eq:h-ineq} holds for $g\in C^3(\Gamma)$ satisfying A3w in the above sense by a minor modification of the proofs in \cite[2.11]{Trudinger14}, \cite[Appendix]{Rankin2020b} which are for $g \in C^4(\Gamma)$. In the proofs op. cit. $C^4$ differentiability is used to deal with a term
  \[ \big[A_{ij}(x_{\theta},u_0,p_1) -A_{ij}(x_{\theta},u_0,p_0)-D_{p_k}A_{ij}(x_{\theta},u_0,p_0)(p_1-p_0)\big](x_1-x_0)_i(x_1-x_0)_j.\]
  where $p_1 = g_x(x_\theta,y_1,z_1)$ and $p_0 = g_x(x_\theta,y_0,z_0)$. For merely $C^3$ $g$ we set $\overline{x}=x_1-x_0$, $\overline{p} = p_1-p_0$ and compute, by a Taylor series for $A_{ij}(x_{\theta},u_0,p_1-(1-t) (\overline{p}\cdot\overline{ x})\overline{x}/\vert\overline{x} \vert^2)$, the inequality
  \begin{align*}
    &\big[A_{ij}(x_{\theta},u_0,p_1) -A_{ij}(x_{\theta},u_0,p_0)-D_{p_k}A_{ij}(x_{\theta},u_0,p_0)\overline{p}\big]\overline{x}_i\overline{x}_j \\
    &\geq \big[A_{ij}(x_{\theta},u_0,p_1 - (\overline{p}\cdot \overline{x}) \overline{x}/\vert\overline{x} \vert^2) -A_{ij}(x_{\theta},u_0,p_0)\\
    &\quad\quad-D_{p_k}A_{ij}(x_{\theta},u_0,p_0)(\overline{p} - (\overline{p} \cdot x) \overline{x}/\vert\overline{x} \vert^2)\big]\overline{x}_i\overline{x}_j  - K \overline{p} \cdot \overline{x}.
  \end{align*}
  Here $K$ depends on $\vert D_pA_{ij} \vert$, i.e. $\Vert g \Vert_{C^{3}}$. Noting the line segment from $p_0$ to $p_1 - (\overline{p} \cdot \overline{x})\overline{x}/\vert\overline{x} \vert^2$ is orthogonal to $\overline{x}$ we obtain \eqref{eq:h-ineq} by the A3w condition and $\overline{p} \cdot \overline{x} = h'(\theta)$. Then \eqref{eq:gqq} is a consequence of \eqref{eq:h-ineq} \cite[Corollary 9.4]{GuillenKitagawa17}.
\end{proof}

For cost functions this is proved (under weaker assumptions) by Jeong \cite{jeong2020}.   The brevity of our proof is superficial: the real work is hidden in Loeper and Trudinger's result that LMP implies A3w for $C^2$ generating functions.

\bibliographystyle{plain}
\bibliography{thesis} 

\end{document}